\theoremstyle{definition}
\newtheorem{theorem}{Theorem}[section]
\newtheorem{corollary}[section]{Corollary}
\newtheorem{definition}[theorem]{Definition}
\newtheorem{lemma}[theorem]{Lemma}
\newtheorem*{notation}{Notation}
\newtheorem*{mainthm}{Theorem}
\newtheorem*{mainlem}{Lemma}
\theoremstyle{remark}
\newtheorem{remark}[theorem]{Remark}
\newtheorem{example}[theorem]{Example}
\DeclareMathAlphabet{\mathpzc}{OT1}{pzc}{m}{it}
\DeclareFontShape{T1}{calligra}{m}{n}{<->s*[2.2]callig15}{}
\DeclareMathOperator*{\Flop}{Flop}
\DeclareMathOperator*{\Insert}{Insert}
\DeclareMathOperator*{\Cancel}{Cancel}
\newcommand{\pushright}[1]{\ifmeasuring@#1\else\omit\hfill$\displaystyle#1$\fi\ignorespaces}
\newcommand{\pushleft}[1]{\ifmeasuring@#1\else\omit$\displaystyle#1$\hfill\fi\ignorespaces}
\newcommand{\Alg}{\text{Alg}}
\newcommand{\AlgList}{\text{AlgList}}
\newcommand{\del}{\partial}
\newcommand{\delt}{\tilde{\partial}}
\newcommand{\FO}{\widetilde{F_0}}
\newcommand{\Fn}{\widetilde{F_n}}
\newcommand{\Fnm}{\widetilde{F_{n+m}}}
\newcommand{\Fnn}{\widetilde{F_{2n}}}
\newcommand{\GO}{\widetilde{G_0}}
\newcommand{\Gm}{\widetilde{G_m}}
\newcommand{\id}{\text{id}}
\newcommand{\im}{\text{im}}
\newcommand{\Ind}{\text{Ind}}
\newcommand{\qand}{\quad\text{ and }\quad}
\newcommand{\morse}{\text{morse}}
\newcommand{\E}{\mathcal{E}}
\newcommand{\F}{\mathcal{F}}
\newcommand{\G}{\mathcal{G}}
\renewcommand{\P}{\mathbb{P}}
\newcommand{\M}{\mathcal{M}}
\newcommand{\N}{\mathbb{N}}
\newcommand{\R}{\mathbb{R}}
\newcommand{\RPtwo}{\mathbb{R}\mathbb{P}^2}
\newcommand{\Z}{\mathbb{Z}}
\title[The moduli space of flowlines in discrete Morse theory]{A combinatorial construction of the moduli space of flowlines in discrete Morse theory}
\author{Sophie Bleau}
\begin{document}

\maketitle
 
\begin{abstract}
    We construct the moduli space of index 2 flowlines of a discrete Morse function, giving a new proof showing that the Morse differential squares to zero in discrete Morse homology.
\end{abstract}
\section*{Introduction}
 
\subsection*{Context and results}

Given a manifold $M$ with a Morse function $f$, Morse theory defines a chain complex which in turn defines the Morse homology of $M$.
The method taken to prove that the square of the differential is zero in \cite{audin2014morse} is the following:
\begin{enumerate}[label = (\alph*)]
    \item Define the vector space of Morse chains on critical points of a Morse function $f:M\to\R$.
    \item Express the boundary map $\langle\del_n (x),y\rangle$ of a critical point $x$ and critical $(n-1)$ points $y$ by counting flowlines. This is done modulo 2.
    \item Show that the sum of coefficients of terms in the expression for $\langle\del^2(x),z\rangle$ is the count of broken trajectories.
     
    \item For the set of trajectories from an $(n+1)$-critical point to an $(n-1)$-critical point, demonstrate that we can include broken trajectories as the boundary of a 1-manifold of trajectories between two critical simplices. 
    \item Use the boundedness of the space to infer compactness, which gives the desired result (as the number of boundaries of a compact 1-manifold is even).
\end{enumerate}
For a simplicial complex with some discrete Morse function defined on its simplices satisfying constraints as defined in \ref{def:morsefunction}, we define a boundary map $\delt$ in terms of the flowlines (Definition \ref{def:flowline}) through the simplicial complex as shown in \ref{thm:d2=0}. 
We use the discrete version of Morse homology defined in \cite{forman} to give a proof that $\delt^2=0$, following a combinatorial analogue to that proven geometrically in \cite{audin2014morse}. 

We give a discrete analogue of the proof in \cite{audin2014morse}, doing so primarily by adapting step (d) in the above outline. We do so by defining an algorithm of evolving index 2 flowlines through the simplicial complex and looking at the signed count of boundary flowlines of the algorithm. 
The main results are Lemma \ref{lem:algInvolutivity}, Lemma \ref{lem:2critFsPerEquivClass} and Theorem \ref{thm:explicitCancellationOfFlowlines}. In particular, these state that this algorithm is involutive on critical flowlines, in the following sense. 
\begin{mainlem}[Restatement: Algorithm involutivity]\label{lem:algInvolutivityIntro}
    For a critical flowline, there is an algorithm that evolves it to another critical flowline. Furthermore, the action of the algorithm on critical flowlines is involutive.
\end{mainlem}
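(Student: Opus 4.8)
The plan is to treat the algorithm as generating a discrete path in the space of index~2 flowlines, where consecutive flowlines differ by a single elementary move drawn from $\Flop$, $\Insert$, and $\Cancel$. First I would make precise the notion of a \emph{critical flowline} as a broken trajectory: a gradient path issuing from the critical $(n+1)$-simplex that passes through an intermediate critical $n$-simplex before descending to the critical $(n-1)$-simplex. The guiding picture, transported from step~(d) of the geometric argument, is that the set of index~2 flowlines forms a combinatorial $1$-manifold in which each \emph{non-critical} flowline admits exactly one forward and one backward elementary move, while the critical flowlines are precisely the endpoints (degree-one vertices) of this structure. The algorithm should then be interpreted as traversing a connected component of this moduli $1$-manifold from one endpoint to the other.

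For the first assertion, that the algorithm evolves a critical flowline to another critical flowline, I would argue in two stages. The first is \textbf{well-definedness and termination}: starting from a critical flowline, the algorithm reads off local data at the simplex where the flowline can be modified, selects the unique admissible move, and iterates. Using the finiteness of the simplicial complex together with a strictly decreasing monovariant (for instance a lexicographic ordering on the sequence of traversed simplices, or the discrete Morse value at the site of modification), I would rule out cycling and conclude that the process halts. The second is that \textbf{the terminal flowline is critical}: the algorithm halts exactly when no further forward move is admissible, and I would verify by a local analysis that the only obstructed configurations are again broken trajectories through a critical $n$-simplex. Together these identify the algorithm with traversal of a component from one boundary point to the other.

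For involutivity, the key is that every elementary move is invertible and that the algorithm's selection rule is blind to the direction of traversal. I would first show that $\Flop$ is its own inverse on the relevant configurations and that $\Insert$ and $\Cancel$ are mutually inverse, so that a forward sequence $m_1, m_2, \dots, m_k$ carrying a critical flowline $F$ to $F'$ has a well-defined reverse $m_k^{-1}, \dots, m_1^{-1}$. The crucial claim is that when the algorithm is launched from $F'$ its first selected move is exactly $m_k^{-1}$, its second is $m_{k-1}^{-1}$, and so on, because the local data determining the next move is preserved under reversing a move. Granting this, the run from $F'$ retraces the path back to $F$, which is precisely the statement that the action is involutive.

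The main obstacle will be establishing this symmetry of the selection rule. Concretely I would need to prove that the local information the algorithm uses to pick the next move at each intermediate flowline does not distinguish the forward direction from the backward direction, ruling out both the possibility that from $F'$ the algorithm leaves the component containing $F$ and the possibility that it stalls prematurely. This reduces to a finite case analysis of how $\Flop$, $\Insert$, and $\Cancel$ interact at the modification site, checking in each case that the inverse of the previous move is the unique admissible next move. A secondary subtlety is to confirm that the monovariant controlling termination is compatible with reversal, so that the forward and backward traversals have the same length $k$ and visit the same intermediate flowlines in opposite order.
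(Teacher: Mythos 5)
Your involutivity argument is, in essence, the paper's own proof of Lemma \ref{lem:algInvolutivity}: there too the key facts are that $\Flop$ is self-inverse and $\Insert$, $\Cancel$ are mutually inverse, and the heart of the proof is exactly the finite case analysis you anticipate --- four cases, organised by the bookkeeping labels `c' and `f', showing that the run launched from the terminal flowline selects at every stage precisely the inverse of the move the forward run used, and hence retraces it. Your ``symmetry of the selection rule'' is the paper's inductive step $(F_{n-i},\ell_{n-i})=(G_i,\overline{\ell_i})\Rightarrow (F_{n-i-1},\ell_{n-i-1})=(G_{i+1},\overline{\ell_{i+1}})$.

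The genuine gap is your termination argument. You propose a strictly decreasing monovariant (a lexicographic order on the traversed simplices, or the Morse value at the modification site). No such globally decreasing quantity can exist: the algorithm's step map does not depend on where the run started, and the sphere example of \S\ref{subsec:sphere_ex} exhibits a component of $\M(123,4)$ on which the algorithm cycles, returning to its initial flowline after finitely many floperations; any monovariant of the kind you describe would forbid such cycles outright. What actually prevents non-termination from a \emph{critical} start is reversibility, not monotonicity: the step map is injective (every flowline has a unique admissible predecessor, which follows from the same local case analysis as the involutivity), so an orbit beginning at a critical flowline cannot fall into a cycle --- a ``$\rho$''-shaped orbit would give some flowline two distinct predecessors --- and since the set of index 2 flowlines between two fixed simplices is finite (a point where acyclicity of $V$, or some substitute for it, is genuinely used), the run must halt, and by Definition \ref{def:algorithm} it can only halt at a critical flowline. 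Note that the paper itself does not prove termination either: Lemma \ref{lem:algInvolutivity} tacitly assumes the list produced from $\FO$ is finite, so your instinct to address it is sound; only the proposed mechanism fails. A final small point: neither your argument nor the paper's lemma shows that the terminal critical flowline is \emph{distinct} from the initial one; in the paper this is deferred to Lemma \ref{lem:2critFsPerEquivClass}.
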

We see in Lemma \ref{lem:2critFsPerEquivClass} that a critical flowline has a unique distinct partner critical flowline, which has the following geometric interpretation. 
\begin{mainlem}[Restatement: Moduli space dimension]\label{lem:2critFsPerEquivClassIntro}
    For $\alpha$ an $(n+1)$-simplex and $\gamma$ an $(n-1)$-simplex, the moduli space of flowlines $\M(\alpha,\gamma)$ is a simplicial manifold of index 1.
\end{mainlem}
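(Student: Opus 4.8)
The plan is to realise $\M(\alpha,\gamma)$ explicitly as a one-dimensional simplicial complex and then verify the local manifold condition vertex by vertex. I would take the $0$-simplices of $\M(\alpha,\gamma)$ to be the flowlines running from $\alpha$ down to $\gamma$, and declare two such flowlines to span a $1$-simplex whenever one is obtained from the other by a single elementary evolution move of the algorithm (one application of $\Flop$, $\Insert$ or $\Cancel$). The first step is to confirm this is a well-defined abstract simplicial complex of dimension $1$: by the involutivity supplied by Lemma~\ref{lem:algInvolutivity} each elementary move is invertible, so the incidence relation is symmetric and contains no loops or doubled edges. Since the ambient complex is finite there are only finitely many flowlines, so $\M(\alpha,\gamma)$ is a finite graph; finiteness here is what replaces the compactness argument of step~(e) in the classical outline, and it comes for free.

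The crux is the local count of edges at each vertex. I would show that every \emph{non-critical} flowline is incident to exactly two $1$-simplices and every \emph{critical} flowline to exactly one. In the interior case one checks that a generic flowline has a unique ``defect'' locating where the path fails to be a pure gradient path, and that this defect can be resolved in exactly two ways, namely by pushing it one step forward or one step backward along the flowline, each realised by precisely one of the moves $\Flop$, $\Insert$, $\Cancel$. In the boundary case one checks that for a critical flowline precisely one of these two directions is obstructed by the criticality condition, so that only a single move applies. This local case analysis over the three move types, ruling out both interior dead ends and branch points of degree $\geq 3$, is the main obstacle: it requires showing that the evolution is locally deterministic in each direction and that criticality is the \emph{only} obstruction to continuing.

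Granting the degree count, the conclusion is formal. A finite graph in which every vertex has degree $1$ or $2$ is a disjoint union of paths and cycles, and such a complex is precisely a compact simplicial $1$-manifold with boundary; equivalently, the link of each vertex is $S^0$ in the interior and a single point on the boundary, the boundary being exactly the degree-$1$ vertices. By construction these are the critical flowlines, so $\M(\alpha,\gamma)$ is a simplicial manifold of dimension (``index'') $1$ with the critical flowlines as its boundary. Combining this with the involutivity of Lemma~\ref{lem:algInvolutivity} identifies each path component with a single orbit of the evolution, whose two endpoints form a pair of critical flowlines. This is the discrete analogue of the classical fact that the space of index-$2$ trajectories is a $1$-manifold whose boundary consists of broken trajectories, and it is exactly what feeds into the proof that $\delt^2=0$.
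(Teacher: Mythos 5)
Your skeleton is the same as the paper's proof of Lemma \ref{lem:2critFsPerEquivClass} (vertices are flowlines, edges come from the algorithm, degree $2$ at noncritical vertices and degree $1$ at critical ones, then classify finite graphs of maximal degree $2$), but there are two genuine gaps. First, your edge relation is not well defined as stated. A single application of $\Insert$ never joins two flowlines: by construction it produces an illegal path with a backward arrow, which is not a vertex of $\M(\alpha,\gamma)$; likewise $\Cancel$ is only ever applied to such illegal paths. So under your literal definition the only edges are Flops whose output happens to be legal, and the degree count is then false. Concretely, in the sphere example of \S\ref{subsec:sphere_ex}, the noncritical flowline $123\to 12\to 124\to 14\to 4$ has exactly one single-move neighbour, namely its Flop $123\to 12\to 124\to 24\to 4$; its other neighbour in the paper's moduli space, $123\to 12\to 1\to 14\to 4$, is reached only by the composite $\Insert$, $\Flop$, $\Cancel$. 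This is exactly why Definition \ref{def:ModuliSpace} takes edges to be pairs of \emph{subsequent legal flowlines in} $\AlgList$ --- one full algorithm step, which may comprise one, two or three floperations --- rather than single moves.

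Second, and more seriously, the heart of the lemma is the degree count, and this is precisely the part you flag as ``the main obstacle'' and do not carry out; it is not a routine local check. Two coincidences must be excluded, and neither follows from local determinism of the moves: (i) the two evolutions out of a noncritical flowline could a priori terminate by folding back on themselves, so that a component would be a path whose two ``boundary'' flowlines coincide; (ii) a flowline could a priori lie on three or more algorithm lists. The paper rules out (i) by a mirror-symmetry argument: since Flop is self-inverse and Insert, Cancel are mutually inverse, $\FO=\Fn$ would force a turning point in the list, either $F_k=F_{k+1}$ --- impossible, because Insert adds a simplex which the subsequent Flop prevents from being the one cancelled --- or $F_{k-1}=F_{k+1}$ with $n$ even, which contradicts the label conjugation $\ell_i=\overline{\ell_{n-i}}$ supplied by Lemma \ref{lem:algInvolutivity} at $i=n/2$. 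It rules out (ii) by applying that same involutivity to the supposed distinct critical endpoints $\widetilde{A_1},\widetilde{A_2},\widetilde{A_3}$ and deriving $\widetilde{A_1}=\widetilde{A_3}$. Your appeal to a unique ``defect'' that can be pushed forward or backward engages with neither coincidence, and your claim that involutivity alone gives ``no loops or doubled edges'' is not correct: invertibility of the moves gives symmetry of the incidence relation, but excluding loops and dead ends is exactly the content of the turning-point argument above. Until those are supplied, the formal conclusion (disjoint union of paths and cycles, with the critical flowlines as distinct endpoints) does not follow.
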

Finally, we conclude with Theorem \ref{thm:explicitCancellationOfFlowlines}, which shows us that the unique distinct critical flowline corresponding via the algorithm to some critical flowline must have an opposite sign.  
\begin{mainthm}[Restatement: Explicit cancellation of flowlines]\label{thm:explicitCancellationOfFlowlinesIntro}
The Morse differential squares to zero. 
\end{mainthm}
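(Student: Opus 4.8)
The plan is to reduce $\delt^{2}=0$ to a signed count of critical flowlines and then to extract the required cancellation from the involution of Lemma \ref{lem:algInvolutivity}. Fix a critical $(n+1)$-simplex $\alpha$ and a critical $(n-1)$-simplex $\gamma$. Expanding the composite against the basis of critical $n$-simplices gives
\begin{equation*}
\langle \delt^{2}(\alpha),\gamma\rangle \;=\; \sum_{\beta} \langle \delt(\alpha),\beta\rangle\,\langle \delt(\beta),\gamma\rangle,
\end{equation*}
the sum ranging over all critical $n$-simplices $\beta$. By the flowline description of $\delt$ (Definition \ref{def:flowline}), each product term is the signed count of the index $2$ flowlines from $\alpha$ to $\gamma$ that break at $\beta$, namely the concatenations of an index $1$ flowline from $\alpha$ to $\beta$ with one from $\beta$ to $\gamma$; the sign of such a broken flowline is the product of the incidence signs of its two constituents. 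First I would collect these terms into a single signed sum $\sum_{F}\mathrm{sgn}(F)$ over the set of critical (broken) flowlines $F$ from $\alpha$ to $\gamma$.

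These critical flowlines are precisely the boundary points of $\M(\alpha,\gamma)$, which by Lemma \ref{lem:2critFsPerEquivClass} is a one-dimensional simplicial manifold. The algorithm of Lemma \ref{lem:algInvolutivity} furnishes a map $\Phi$ on this boundary set with $\Phi^{2}=\id$, and by Lemma \ref{lem:2critFsPerEquivClass} the partner $\Phi(F)$ is always distinct from $F$; hence $\Phi$ is a fixed-point-free involution and partitions the critical flowlines into pairs $\{F,\Phi(F)\}$. The crux of the argument is then to prove the sign reversal $\mathrm{sgn}(\Phi(F))=-\,\mathrm{sgn}(F)$. To establish it I would run through the elementary moves that the algorithm applies along the connecting arc --- the $\Flop$, $\Insert$, and $\Cancel$ operations --- and track their effect on the product of incidence signs, showing that the net change across the whole arc is a single flip.

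Granting the sign reversal, the conclusion is immediate: the sum $\sum_{F}\mathrm{sgn}(F)$ breaks into pairs $\mathrm{sgn}(F)+\mathrm{sgn}(\Phi(F))=0$, so $\langle \delt^{2}(\alpha),\gamma\rangle=0$ for all $\alpha$ and $\gamma$, and therefore $\delt^{2}=0$. This is the combinatorial counterpart of the geometric argument of \cite{audin2014morse}: the fixed-point-free, sign-reversing involution $\Phi$ plays the role there assigned to the identification of broken trajectories with the two boundary endpoints of an oriented arc in the compactified moduli space, and it yields cancellation over $\Z$ rather than merely a parity statement modulo $2$.

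The step I expect to be the main obstacle is the sign reversal of the preceding paragraph. The partners $F$ and $\Phi(F)$ differ by a sequence of local modifications of the flowline, and I must show that their combined effect on the orientation data is an \emph{odd} number of sign changes rather than an even one. Concretely this amounts to checking that exactly one incidence coefficient switches sign as one traverses the interior arc of $\M(\alpha,\gamma)$ joining the two critical endpoints --- equivalently, that the arc is coherently orientable and induces opposite orientations on its two boundary points. Arranging the bookkeeping of these local sign changes so that the parity is provably odd, uniformly over the $\Flop$, $\Insert$, and $\Cancel$ moves, is where the construction demands the most care.
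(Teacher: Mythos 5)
Your proposal follows essentially the same route as the paper: expand $\langle\delt^2(\alpha),\gamma\rangle$ as a signed count of broken (critical) flowlines, identify these with the boundary points of the one-dimensional moduli space $\M(\alpha,\gamma)$, and cancel them in pairs under the fixed-point-free, sign-reversing involution furnished by the algorithm. The sign-reversal step you flag as the main obstacle is resolved in the paper exactly along the lines you sketch: each of the floperations Flop, Insert, and Cancel individually flips $\theta$, and Lemma \ref{1mod2floperations} shows the algorithm performs an odd number of floperations between the two critical endpoints, yielding Corollary \ref{cor:theta.alg=-theta.alg.alg}.
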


We have proved in this paper that $\delt^2=0$ in looking at the signed count of boundary flowlines of the algorithm. In the process of this, we have shown that the devised algorithm is involutive when acting on critical flowlines and that each boundary flowline has a unique distinct boundary flowline in its equivalence class. Furthermore, although \cite{forman} requires that Hasse diagrams of the simplicial complex with Morse function cannot have cyclic paths, the results here are independent of such an assumption. This is to say, we do not require our gradient vector fields to be discrete as in the sense of Theorem \ref{thm:no_cycles}. The result in \cite{audin2014morse} uses the count of objects in modulo 2, whereas in this paper we will give each arrow a sign as defined in \cite{MorseQuivers} and show that the signed count of objects is zero. Further study may involve the analysis of flowlines with more than two drops in dimension, although, as recognized in [p2-3, \cite{nanda}], ordering the two double drops and negotiating how they might `pass' one another becomes a problem of its own.

\subsection*{Main idea}
Given $f:K\to\R$ a simplicial Morse function, a flowline through a simplicial complex consists of a sequence of simplices decreasing in Morse value, such that each step in the sequence either increases or decreases the simplex dimension by one, and we can never increase the dimension two steps in a row.
 
For instance, for a triangle (known as a 2-simplex) we can have a path of simplices to one of its vertices (known as 0-simplices) via an adjacent edge. 
\begin{figure}[H]
    \centering
\begin{tikzpicture}

\node (v1) at (-2,-1) {$\gamma$};
\node[draw, fill, minimum size=1, inner sep=1, outer sep=1] (v3) at (2,-1) {};
\node[draw, fill, minimum size=1, inner sep=1, outer sep=1] (v2) at (0,2) {};
\draw  (v1) edge (v2) ;
      \draw (-1,-1)--(1,-1)
      node[midway, below] {$\beta$};
\draw  (v1) edge (v3);
\draw  (v3) edge (v2);
\node at (0,0.2) {$\alpha$};
\draw [-stealth][red] plot[smooth, tension=.0] coordinates {(0,0) (0,-1) (-1.7691,-0.9976)};
\end{tikzpicture}
    \caption{The figure shows a 2-simplex, $\alpha$, with 1-simplices as its edges and 0-simplices as its vertices. A path from the 2-simplex to one of its 0-simplices, $\gamma$ via boundary arrows is shown in red.}
    \label{fig:intro2simp}
\end{figure}
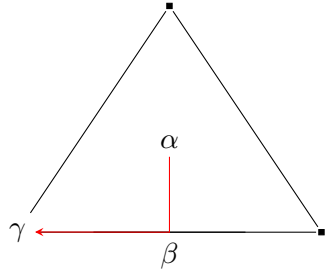
 
In this example, we want to show that the number of paths from $\alpha$ to $\gamma$ is 0 modulo 2; in this setting, there is an opposite path via the other edge. We will later see that they have opposite signs, so we may consider the first flowline with coefficient 1 and the second with coefficient $-1$ such that, considering sign, there are 0 flowlines.

\begin{figure}[H]
    \centering
\begin{tikzpicture}
\node (v1) at (-2,-1) {$\gamma$};
\node [draw, fill, minimum size=1, inner sep=1, outer sep=1](v3) at (2,-1) {};
\node [draw, fill, minimum size=1, inner sep=1, outer sep=1](v2) at (0,2) {};
\draw  (v1) edge (v2) ;
      \draw (-1,-1)--(1,-1)
      node[midway, below] {$\beta$};
\draw  (v1) edge (v3);
\draw  (v3) edge (v2);
\node at (0,0) {$\alpha$};
\draw [-stealth][red] plot[smooth, tension=.0] coordinates {(-0.001,-0.2178) (0,-1) (-1.7691,-0.9976)};
\draw [-stealth][blue]plot[smooth, tension=0] coordinates {(-0.1773,0.1828) (-0.8,0.8) (-1.8738,-0.8112)};
\end{tikzpicture}
    \caption{The figure shows an alternative path from the 2-simplex to the 0-simplex in blue.}
    \label{fig:intro2simp2}
\end{figure}
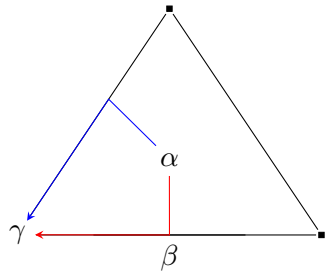

More generally, any flowline consisting of a double drop in dimension has a unique partner with a double drop point, as shown in the diagram below. We will formalise taking a double drop point to its alternative double drop, and call it the `Flop' operation. 
\begin{figure}[H]
    \centering
    
\begin{tikzpicture}
\node (v1) at (0,1) {$\alpha$};
\node (v3) at (0,-1) {$\gamma$};
\node (v2) at (-1,0) {$\beta$};
\node (v4) at (1,0) {$\beta'$};
\draw  [-stealth](v1) edge (v2);
\draw  [-stealth](v2) edge (v3);
\draw  [-stealth][dotted](v1) edge (v4);
\draw  [-stealth][dotted](v4) edge (v3);
\draw  [-stealth][dashed](v2) edge (v4);
\node at (0.0,0.2) {$\exists !$};
\end{tikzpicture}

    \caption{The figure shows that between some $(n+1)$-simplex, $\alpha$, and some $(n-1)$-simplex in its boundary, $\gamma$, there exist two distinct paths $\alpha\to\gamma$. The bold arrows $\alpha\to\beta\to\gamma$ represent the original path, the dotted arrows $\alpha\dashrightarrow\beta'\dashrightarrow\gamma$ represent the unique alternative path to the original path, and the dashed arrow represents the unique action of taking one path to the other.}
    \label{fig:introExistFlop}
\end{figure}
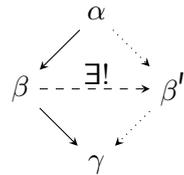

Our result generalises this finding to longer paths. Some flowlines, called critical flowlines, are special because of the type of simplex at this double drop in dimension. We wish to show in this paper that for a critical flowline, there is a unique alternative critical flowline. This is on a larger scale, and each edge in the diagram may consist of a flowline of multiple rises and drops in dimension, as shown. Note that, unlike the alternative drop point, $\beta'$ could be equal to $\beta$, but we require the critical flowlines to be distinct.
\begin{figure}[H]
    \centering
    \begin{tikzpicture}
\node (v1) at (0,1) {$\alpha$};
\node (v3) at (0,-1) {$\gamma$};
\node (v2) at (-1,0) {$\beta$};
\node (v4) at (1,0) {$\beta'$};
\draw [-stealth]plot[smooth, tension=.0] coordinates {(-0.2,0.9) (-0.2,0.7) (-0.4,0.7) (-0.4,0.5) (-0.6,0.5) (-0.6,0.3) (-0.8,0.3) (-0.8,0.1)};
\draw [-stealth]plot[smooth, tension=.0] coordinates {(-0.9,-0.1) (-0.9,-0.3) (-0.7,-0.3) (-0.7,-0.5) (-0.5,-0.5) (-0.5,-0.7) (-0.3,-0.7) (-0.3,-0.9) (-0.1,-0.9)};
\draw  [-stealth][dotted]plot[smooth, tension=.0] coordinates {(0.2,0.9) (0.2,0.7) (0.4,0.7) (0.4,0.5) (0.6,0.5) (0.6,0.3) (0.8,0.3) (0.8,0.2)};
\draw  [-stealth][dotted]plot[smooth, tension=.0] coordinates {(0.1,-0.9) (0.3,-0.9) (0.3,-0.7) (0.5,-0.7) (0.5,-0.5) (0.7,-0.5) (0.7,-0.3) (0.9,-0.3) (0.9,-0.1)};
\draw  [-stealth][dashed](v2) edge (v4);
\node at (0.0,0.2) {$\exists !$};
\end{tikzpicture}
    \caption{The figure shows an analogue to the double drop on a larger scale. It shows that for a critical flowline $\alpha\to \gamma$, there is another critical flowline we can find via an algorithm. The zig-zag arrows represent the steps up and down in dimension from an $n$-simplex to an $(n-1)$-simplex. The dotted arrow $\alpha\dashrightarrow\beta'\dashrightarrow\gamma$ is the unique alternative flowline to $\alpha\to\beta\to\gamma$, and the dashed arrow again represents the unique action of taking one path to the other.}
    \label{fig:introFlowlineFlop}
\end{figure}
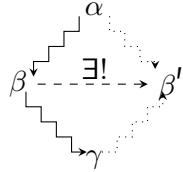

We will find in the course of the paper that the critical flowlines are the two endpoints of a one-dimensional space of flowlines defined by some algorithm. In this way, we will show that the critical flowlines come in pairs, and thus that the discrete Morse differential squares to zero.

\subsection*{Outline}
In Section \ref{sec:Complexes} we will discuss simplicial complexes with an abstract example. This leads us into Section \ref{sec:GradientVectorFields} where simplicial complexes are endowed with gradient vector fields in the form of Morse functions. Here we also discuss the definition of $\delt$, as well as flowlines through a simplicial complex and the signs induced on them. Section \ref{sec:The fundamental lemma of Morse theory} contains the main meat of the paper. We show an example of the canonical Morse function on a simplicial complex and prove that $\delt^2=0$ in this case. Then we define our algorithm on flowlines through a simplicial complex with Morse function, showing two examples of how this algorithm functions.
In Section \ref{sec:Proving that the boundary square must have a pair} we show that the algorithm is involutive and that the critical flowlines come in opposite pairs which cancel out in sign, giving $\delt^2=0$.

\subsection*{Acknowledgements}
I would like to thank Jeff Hicks for his supervision throughout this project, as well as Diana Bergerova for her support, comments and feedback. The project was funded by the University of Edinburgh School of Mathematics and College of Science and Engineering. 

\section{Background: Complexes}\label{sec:Complexes}
In this paper, we wish to study the universe of simplices. Simplices are the building blocks of a simplicial complex, and they have certain properties we are interested in.

A vertex is a $0$-simplex, an edge is a $1$-simplex, a triangle is a $2$-simplex and a tetrahedron is a $3$-simplex. In your head, you may be able to justify this based on the number of vertices in each of the sets, and more importantly, the dimensions of the spaces in which these shapes lie, but let's give a more formal definition.

Let $V$ be a set called the set of vertices. For subsets $\tau,\sigma$ of $V$, let $\tau$ be a `face' of $\sigma$ if and only if all subsets of $\tau$ are subsets of $\sigma$. 
For example
\[\{(1)\} \text{ is a face of } \{(1,2,3)\}.\]
Let $\tau$ be a `facet' of $\sigma$ if $\tau$ is a face of $\sigma$ and $\tau$ has one fewer element than $\sigma$ (which is to say that if $\sigma$ is $n$-dimensional then $\tau$ is $(n-1)$ dimensional), so $\tau$ is a maximal face of $\sigma$. For example
\[\{(1,2)\} \text{ is a facet of } \{(1,2,3)\}.\]

\begin{definition}[Simplicial complex]
A \textbf{simplicial complex} $K$ on vertices $V$ is a set of subsets $\sigma \subset V$ such that if $\sigma\in K$ and $\tau$ is a face of $\sigma$ then $\tau\in K$.
\end{definition}
This is to say that, for example, a tetrahedron cannot be in the simplicial complex if not all its faces are.

A subcomplex of a simplicial complex $K$ is a collection of some of these objects in a $K$ such that the constraints are still satisfied. Now let's define the objects we've been talking so vaguely about. 
 
\begin{definition}[Simplex]
    For $K$ a simplicial complex, an $n$-\textbf{simplex} $S$ is a subcomplex of $K$ consisting of all of the subsets of some face $\sigma\in K$.
     
    Its boundary is populated by $(n-1)$-simplices. 
\end{definition}

\begin{notation}
    For an $n$-simplex $\alpha$, we often write the index $\alpha^{(n)}$ to indicate dimension.
\end{notation}
We can store all the information from a simplicial complex in one diagram.

\subsection{The Hasse diagram}
Let $K$ be a simplicial complex where $p$ is the dimension of the largest simplex. Arrange the $p$-dimensional simplices in a row, the $(p-1)$ dimensional simplices in the next row, and carry on until we have $p+1$ rows (the bottom such being the collection of vertices). For each simplex, draw arrows called boundary arrows connecting it to each of its facets. The resulting directed graph is called the \textbf{Hasse diagram} of $K$.
For example, the following simplicial complex (left) has the associated Hasse diagram (right).
\begin{figure}[H]
    \centering
    \begin{tikzpicture}
\node (v1) at (-4.2,0.8) {1};
\node (v2) at (-5.4,-1) {2};
\node (v3) at (-3,-1) {3};
\draw  (v1) edge (v2);
\draw  (v2) edge (v3);
\draw  (v3) edge (v1);
\node (v4) at (-4.2,-2.8) {4};
\draw  (v2) edge (v4);
\draw  (v4) edge (v3);
\node at (-4.2,-0.4) {123};
\node at (-4.2,-1.6) {234};
\node (v1) at (0,0) {123};
\node (v8) at (2,0) {234};
\node (v4) at (1,-1) {23};
\node (v3) at (0,-1) {13};
\node (v2) at (-1,-1) {12};
\node (v9) at (2,-1) {24};
\node (v10) at (3,-1) {34};
\node (v5) at (-1,-2) {1};
\node (v6) at (0.3,-2) {2};
\node (v7) at (1.7,-2) {3};
\node (v11) at (3,-2) {4};
\draw  [-stealth](v1) edge (v2);
\draw  [-stealth](v1) edge (v3);
\draw  [-stealth](v1) edge (v4);
\draw  [-stealth](v2) edge (v5);
\draw  [-stealth](v2) edge (v6);
\draw  [-stealth](v3) edge (v5);
\draw  [-stealth](v3) edge (v7);
\draw  [-stealth](v4) edge (v6);
\draw  [-stealth](v4) edge (v7);
\draw  [-stealth](v8) edge (v4);
\draw  [-stealth](v8) edge (v9);
\draw  [-stealth](v8) edge (v10);
\draw  [-stealth](v9) edge (v6);
\draw  [-stealth](v9) edge (v11);
\draw  [-stealth](v10) edge (v7);
\draw  [-stealth](v10) edge (v11);
\end{tikzpicture}
    \caption{The figure shows on the left a simplicial complex in which each 2-simplex is labelled by its vertices. On the right, we see the Hasse diagram associated with the simplicial complex by drawing an arrow from an $n$-simplex to all its $(n-1)$ dimensional facets.}
    \label{fig:Hasse1}
\end{figure}
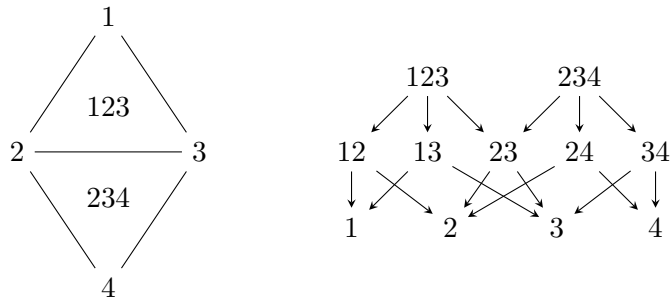

As observed in the introduction, for any path from an $(n+1)$-simplex to an $(n-1)$-simplex, we must have a unique alternative path between these simplices. We will prove this in Lemma \ref{lem:floppability assertion}.
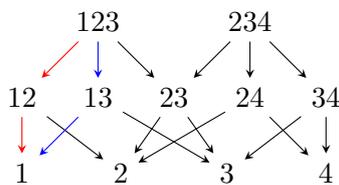
\begin{figure}[H]
    \centering
    \begin{tikzpicture}
\node (v1) at (0,0) {123};
\node (v8) at (2,0) {234};
\node (v4) at (1,-1) {23};
\node (v3) at (0,-1) {13};
\node (v2) at (-1,-1) {12};
\node (v9) at (2,-1) {24};
\node (v10) at (3,-1) {34};
\node (v5) at (-1,-2) {1};
\node (v6) at (0.3,-2) {2};
\node (v7) at (1.7,-2) {3};
\node (v11) at (3,-2) {4};
\draw  [red][-stealth](v1) edge (v2);
\draw  [blue][-stealth](v1) edge (v3);
\draw  [-stealth](v1) edge (v4);
\draw  [red][-stealth](v2) edge (v5);
\draw  [-stealth](v2) edge (v6);
\draw  [blue][-stealth](v3) edge (v5);
\draw  [-stealth](v3) edge (v7);
\draw  [-stealth](v4) edge (v6);
\draw  [-stealth](v4) edge (v7);
\draw  [-stealth](v8) edge (v4);
\draw  [-stealth](v8) edge (v9);
\draw  [-stealth](v8) edge (v10);
\draw  [-stealth](v9) edge (v6);
\draw  [-stealth](v9) edge (v11);
\draw  [-stealth](v10) edge (v7);
\draw  [-stealth](v10) edge (v11);
\end{tikzpicture}
    \caption{The figure shows within the Hasse diagram the two paths from the 2-simplex 123 to 1, a 0-dimensional face.}
    \label{fig:FlopInHasse}
\end{figure}

We will discuss more about why this is and what this means in \S\ref{subsec:CanonicalMorseFunction}. But first, let's see an example of a simplicial complex which does not arise from topology.

\subsection{An example of colourable complexes}
Let $K_{k,n}$ be all the graphs on $n$ vertices which are $k$-edge-colourable
\footnote{We say a graph is $k$-edge-colourable if we can colour its edges with $k$ colours in such a way that no two adjacent edges are coloured the same.} (not necessarily connected). For $n=3$, $k=2$ we find that only the graphs with $\leq 2$ edges are $k$-colourable, so the complete graph is not 2-colourable.

\begin{figure}[H]
    \begin{center}
        \includegraphics[width=0.5\textwidth]{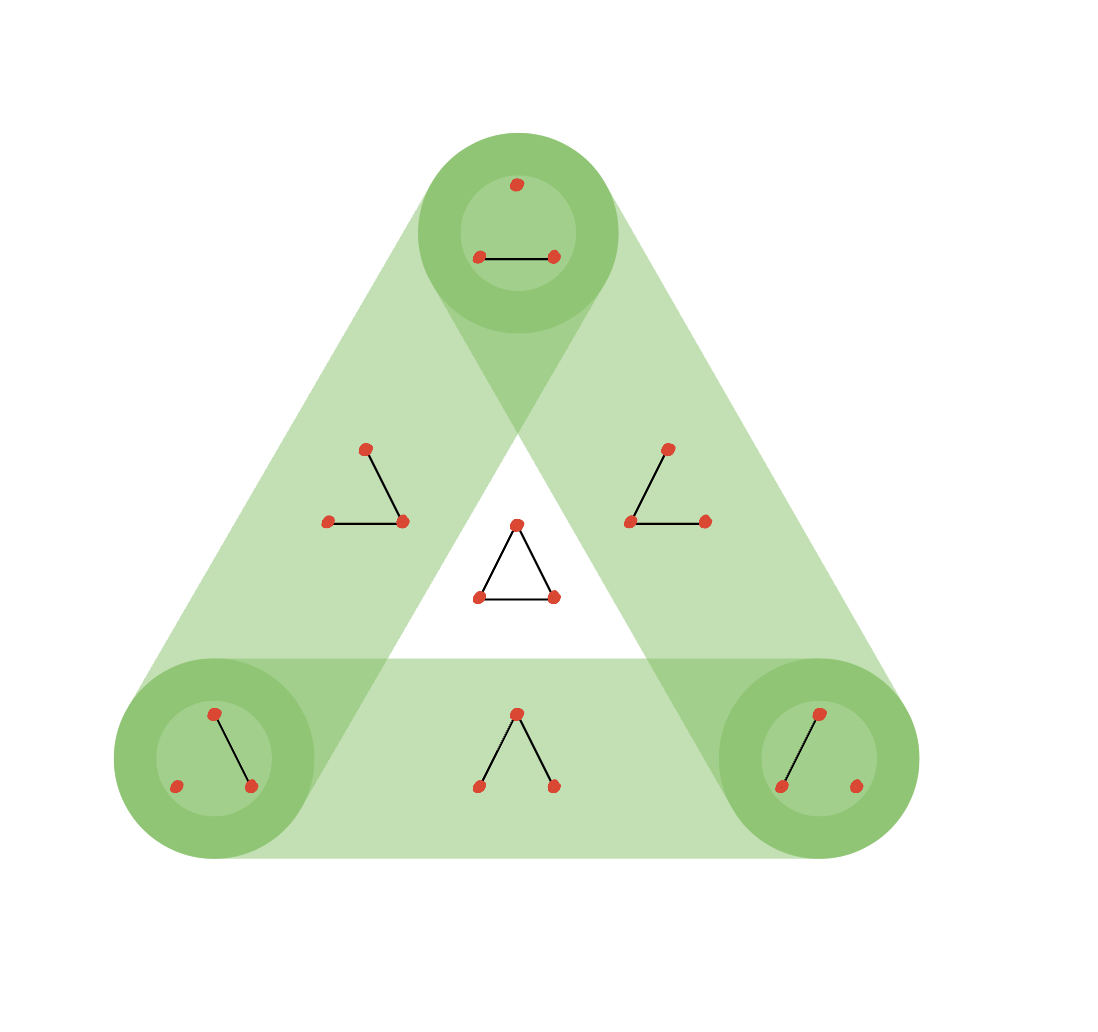}
        \caption{The figure shows the $2$-colourable subgraphs of a triangle, where the lower dimensional subcomplexes consist of one edge and the higher consist of two edges. The simplicial complex, shown in green, has these subgraphs as its simplices. Note that the subgraph with three edges is not $2$-colourable. }
        \label{fig:tri2col}
    \end{center}
\end{figure}
 
We use the data of $K_{k,n}$ to construct an abstract simplicial complex. The vertices of this complex will be graphs with a single edge. The vertices $\{e_0,e_1,\dots,e_d\}$ belong to a the $d$-dimensional simplices if the corresponding graphs with $d+1$ edges is $k$-edge-colourable. Here, two one-edged graphs are connected by a two-edged graph, but there are no $2$-simplices since there are no three-edged 2-edge-colourable graphs with 3 vertices.

In fact, $k$-edge-colourability is an example of a \textbf{graph property}.
A graph property is called \textbf{monotone decreasing} if for any spanning graphs $G_1\subset G_2$ if $G_2$ has the graph property then $G_1$ must too.
Some other examples of monotone decreasing graph properties are the following: graphs with $\leq k$ edges; graphs $G$ with $\deg(v)\leq d$ for all $v\in G$ and for some $d\in \N$; not connected graphs; not $i$-connected graphs; graphs with no Hamiltonian cycles; and bipartite graphs. Each monotone decreasing graph property gives rise to an abstract simplicial complex.

\section{Background: Gradient vector fields}\label{sec:GradientVectorFields}
We recall the definition of a Morse function on a simplicial complex, define an orientation on a simplex, and express a simplicial complex endowed with a Morse function in a `modified Hasse diagram'. Morse functions are functions defined on a simplicial complex $K$, assigning to each simplex $\sigma\in K$ some value.
We impose the following constraint: we ask that higher dimensional simplices ``usually'' have higher value in Morse function. This means that sliding downwards in Morse value corresponds to sending an $n$-simplex to one of its facets via a boundary arrow, sliding down dimension with the help of gravity.
The idea of a simplex in a simplicial complex being \textbf{critical} under a Morse function $f$ is to say that it is a well-behaved simplex of the function. 
In defining a Morse function, we require simplices of lower dimension than its neighbours to have a larger image in $f$ than at most one of its higher dimensional neighbours, and those of higher dimension than its neighbours to have a smaller image than at most one of its neighbours. Let us state this more formally.
\begin{definition}\label{def:morsefunction}
    We call $f:K\to \R$ a discrete Morse function if for each simplex $\beta^{(p)}$ in $K$, 
    \begin{itemize}
        \item at most one neighbouring lower dimensional simplex $\gamma^{(p-1)}$ has a higher Morse assignment, i.e.
        \[\#\left\{\gamma^{(p-1)}<\beta | f(\gamma)\geq f(\beta)\right\}\leq 1,\]
        \item at most one neighbouring higher dimensional simplex $\alpha^{(p+1)}$ has a lower Morse assignment, i.e.
        \[\#\left\{\alpha^{(p+1)}>\beta | f(\alpha)\leq f(\beta)\right\}\leq 1,\]
    \end{itemize}
\end{definition}
The most ``well-behaved'' a simplex can be with respect to $f$ is \textbf{critical} (I like to remember that it's so well-behaved it thinks it has the right to criticise all the other simplices). A critical simplex is one that has no occurrences of higher dimensional simplices being of lower Morse value, or lower dimensional simplices being of higher Morse value. This is to say that the inequalities in the above bullet points must be strict.  
\begin{example}
The canonical Morse function is one where $f(\sigma)=\dim(\sigma)$ for all simplices $\sigma\in K$. A simplicial complex with the canonical Morse function is shown in Figure \ref{fig:SimpCompMorseCrit}, in which every simplex is critical. 
\begin{figure}[H]
    \centering
    \begin{tikzpicture}
\node (v1) at (0,2) {0};
\node (v2) at (-1.2,0.2) {0};
\node (v3) at (1.2,0.2) {0};
\draw  (v1) edge (v2);
\draw  (v2) edge (v3);
\draw  (v3) edge (v1);
\node (v4) at (0,-1.6) {0};
\draw  (v2) edge (v4);
\draw  (v4) edge (v3);
\node at (0,1) {2};
\node at (0,-0.4) {2};
\node at (-0.8,1.3) {1};
\node at (0.7,1.3) {1};
\node at (0.7,-0.9) {1};
\node at (-0.7,-0.9) {1};
\node at (0,0.4) {1};
\end{tikzpicture}
    \caption{The figure shows a simplicial complex on which the canonical Morse function is applied. The value on each simplex is the value of the discrete Morse function at that simplex. Every simplex has a higher Morse value than its lower dimensional neighbours, and a lower Morse value than its higher dimensional neighbours. Every simplex in this simplicial complex is therefore critical.}
    \label{fig:SimpCompMorseCrit}
\end{figure}
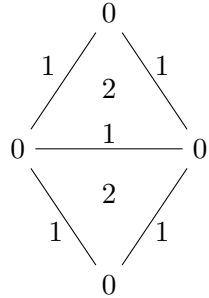

As we can see, higher dimensional simplices have a higher Morse function output, and vice versa. 
\end{example}
\begin{example}
A Morse function on a simplicial complex with only one critical simplex is shown in Figure \ref{fig:nonCritSimpCompEx}.
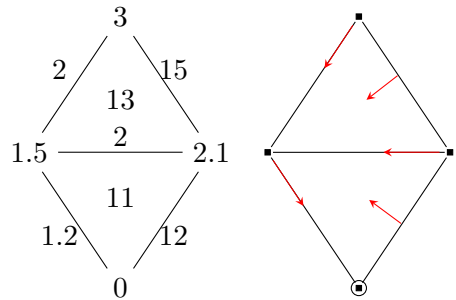
\begin{figure}[H]
    \centering
\begin{tikzpicture}
\node (v1) at (-2.2,1.8) {3};
\node (v2) at (-3.4,0) {1.5};
\node (v3) at (-1,0) {2.1};
\draw  (v1) edge (v2);
\draw  (v2) edge (v3);
\draw  (v3) edge (v1);
\node (v4) at (-2.2,-1.8) {0};
\draw  (v2) edge (v4);
\draw  (v4) edge (v3);
\node at (-2.2,0.7) {13};
\node at (-2.2,-0.6) {11};
\node at (-3,1.1) {2};
\node at (-1.5,1.1) {15};
\node at (-1.5,-1.1) {12};
\node at (-3,-1.1) {1.2};
\node at (-2.2,0.2) {2};
\node [draw, fill, minimum size=1, inner sep=1, outer sep=1] (v1) at (0.9381,1.7987) {};
\node [draw, fill, minimum size=1, inner sep=1, outer sep=1] (v2) at (-0.2619,-0.0013) {};
\node [draw, fill, minimum size=1, inner sep=1, outer sep=1] (v3) at (2.1381,-0.0013) {};
\draw  (v1) edge (v2);
\draw  (v2) edge (v3);
\draw  (v3) edge (v1);
\node [draw, fill, minimum size=1, inner sep=1, outer sep=1] (v4) at (0.9381,-1.8013) {};
\draw  (v2) edge (v4);
\draw  (v4) edge (v3);
\node at (0.9381,0.5987) {};
\node at (0.9381,-0.6013) {};
\draw  [red][stealth-]plot[smooth, tension=.7] coordinates {(1.0381,0.6947) (1.4478,1.0154)};
\draw  [red][stealth-]plot[smooth, tension=.7] coordinates {(1.0737,-0.6324) (1.4923,-0.9441)};
\draw  [red][-stealth]plot[smooth, tension=.7] coordinates {(-0.191,-0.1069) (0.2097,-0.7125)};
\draw  [red][-stealth]plot[smooth, tension=.7] coordinates {(0.8599,1.6834) (0.4859,1.1134)};
\draw  [red][-stealth]plot[smooth, tension=.7] coordinates {(2,0) (1.2608,0.0001)};
\draw  (v4) ellipse (0.1 and 0.1);
\end{tikzpicture}
    \caption{The figure shows first a simplicial complex with a nontrivial Morse function, and second the same simplicial with no Morse values: only the information of the Morse arrows. The critical simplex is circled in the second diagram.}
    \label{fig:nonCritSimpCompEx}
\end{figure}

Every instance of a lower dimensional simplex having a higher Morse function output or vice versa is indicated in the second diagram by a red arrow. This indicates that we can still travel downwards: not in the traditional sense of dimension, but in Morse value. 
\end{example}

The topic of \textbf{gradient vector fields} intersects discrete Morse theory by paying attention to the instances of bad behaviour, which is to say boundary arrows between noncritical simplices.
 
For instance, when we have a lower dimensional simplex $\gamma^{(p-1)}$ than $\alpha^{(p)}$ with $f(\gamma)\geq f(\alpha)$, we include the pair of simplices $\{\gamma, \alpha\}$ in the gradient vector field. To illustrate this pairing on a simplicial complex we may draw an arrow from the lower dimensional simplex $\gamma$ to the higher, $\alpha$, as in Figure \ref{fig:modHasse1}. As the boundary arrow naturally takes us down a dimension each time we use it, drawing an arrow $\gamma\to\alpha$ is a clear way of indicating that we're travelling against gravity. 

\begin{definition}[Definition 3.3 \cite{forman}]
    A discrete vector field $V$ on $K$ is a collection of pairs $\{\alpha^{(p)}<\beta^{(p+1)}\}$ of simplices of $K$ such that each simplex is in at most one pair of $V$.
\end{definition}

\begin{lemma}\label{lem:headtailorneither}
    In order to obey the strict rules given in Definition \ref{def:morsefunction}, every simplex must be exactly one of;
    \begin{itemize}
        \item the head of an arrow in $V$,
        \item the tail of an arrow in $V$, or
        \item neither the head nor tail of an arrow in $V$ (a critical simplex).
    \end{itemize}
    The pairing induced by a discrete Morse function is called a discrete gradient vector field.
\end{lemma}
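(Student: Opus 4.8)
The plan is to verify the trichotomy in two stages: first, that each simplex can sit at the head of at most one arrow and at the tail of at most one arrow, and second—the genuinely substantive point—that no simplex can simultaneously be a head and a tail. Recall that an arrow $\gamma \to \alpha$ with $\gamma^{(p-1)} < \alpha^{(p)}$ is placed in $V$ exactly when $f(\gamma) \geq f(\alpha)$, so for a fixed simplex $\beta^{(p)}$ the arrows having $\beta$ as their tail are precisely the higher-dimensional neighbours $\alpha^{(p+1)} > \beta$ with $f(\alpha) \leq f(\beta)$, and the arrows having $\beta$ as their head are precisely the facets $\gamma^{(p-1)} < \beta$ with $f(\gamma) \geq f(\beta)$.

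First I would read off the two counting bounds directly from Definition \ref{def:morsefunction}. The second bullet, applied to $\beta$, says $\#\{\alpha^{(p+1)} > \beta \mid f(\alpha) \leq f(\beta)\} \leq 1$, so $\beta$ is the tail of at most one arrow; the first bullet, applied to $\beta$, says $\#\{\gamma^{(p-1)} < \beta \mid f(\gamma) \geq f(\beta)\} \leq 1$, so $\beta$ is the head of at most one arrow. Together with the fact that ``neither'' is by definition the complement of ``head or tail'', this already shows the three cases are exhaustive and that the head-case and the tail-case are each individually unambiguous; what remains is their mutual exclusivity.

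The main obstacle is ruling out a simplex $\beta^{(p)}$ that is both a head and a tail. Suppose it were: then there exist $\gamma^{(p-1)} < \beta < \alpha^{(p+1)}$ with $f(\gamma) \geq f(\beta) \geq f(\alpha)$, and in particular $f(\gamma) \geq f(\alpha)$. Here I would invoke the diamond property of the face poset of a simplicial complex—the same ``Flop'' phenomenon recorded in Figure \ref{fig:introExistFlop} and established in Lemma \ref{lem:floppability assertion}—which furnishes exactly one further $p$-simplex $\beta'$ with $\gamma < \beta' < \alpha$ and $\beta' \neq \beta$. Now apply the first bullet of Definition \ref{def:morsefunction} to $\alpha$: since $\beta$ already realises a $p$-dimensional facet of $\alpha$ with $f(\beta) \geq f(\alpha)$, the ``at most one'' bound is saturated and forces $f(\beta') < f(\alpha)$. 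Symmetrically, applying the second bullet to $\gamma$: since $\beta$ already realises a $p$-dimensional neighbour of $\gamma$ with $f(\beta) \leq f(\gamma)$, the bound forces $f(\beta') > f(\gamma)$. Chaining these gives $f(\gamma) < f(\beta') < f(\alpha)$, hence $f(\gamma) < f(\alpha)$, contradicting $f(\gamma) \geq f(\alpha)$. This contradiction establishes mutual exclusivity and completes the trichotomy.

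I expect the only real content to be this last step, and in particular the appeal to the two-element diamond between $\gamma$ and $\alpha$; everything else is bookkeeping against the two inequalities of Definition \ref{def:morsefunction}. The point to be careful about is the orientation of the inequalities when the bullets are applied to $\alpha$ and $\gamma$ rather than to $\beta$, since the roles of ``higher'' and ``lower'' neighbour swap: I would state explicitly that $f(\beta) \geq f(\alpha)$ places $\beta$ in the set counted by the first bullet for $\alpha$, and that $f(\beta) \leq f(\gamma)$ places $\beta$ in the set counted by the second bullet for $\gamma$, so that in each case the bound is already saturated by $\beta$ and must exclude $\beta'$.
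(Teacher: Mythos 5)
Your proof is correct. Note, however, that the paper itself never proves this lemma: it is stated as imported background (it is Forman's Lemma 2.5 from the discrete Morse theory literature, and the sentence defining the ``discrete gradient vector field'' signals it is being quoted rather than established), so there is no in-paper argument to compare against. What you have written is precisely the classical Forman argument: the counting bounds of Definition \ref{def:morsefunction} give ``at most one arrow in'' and ``at most one arrow out,'' and mutual exclusivity follows by the diamond property --- the unique second simplex $\beta'$ with $\gamma < \beta' < \alpha$ --- whose Morse value is squeezed into $f(\gamma) < f(\beta') < f(\alpha)$, contradicting $f(\gamma) \geq f(\beta) \geq f(\alpha)$. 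Your inequality bookkeeping (applying the first bullet to $\alpha$ and the second to $\gamma$, each already saturated by $\beta$) is exactly right. One presentational caveat: you invoke Lemma \ref{lem:floppability assertion}, which appears later in the paper; this creates no circularity, since its proof (removing the two vertices of $\alpha \setminus \gamma$ in either order) is purely combinatorial and independent of the present lemma, but if this proof were spliced into the paper at its current location, the diamond property should either be cited as forward-referenced elementary material or proved inline in one line.
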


Usually we travel down dimensions via boundary arrows, but given that a discrete vector field takes a lower dimensional simplex to a higher one, we indicate each pairing from the discrete vector field with a `Morse arrow' drawn on the simplicial complex. 

The trivial case is when the discrete vector field is empty, which is to say that there are no arrows against the grain, and all simplices are critical. At the other extreme, we say $V$ is a \textbf{complete matching} when every simplex belongs to exactly one pair in $V$, which is to say that there are no critical simplices. One of the goals of discrete Morse theory is to generate Morse functions on a simplicial complex which minimise the critical set - this is discussed in \cite{sharko} and a specific case of this is summarised succinctly in \cite{forman}. 

A discrete vector field allows us to traverse the simplicial complex by travelling simplex to simplex: alternating between climbing up Morse arrow maps and sliding down the boundary arrows. A $\mathbf{V}$-\textbf{path} is such a journey. We define it to be a sequence of simplices
\[\alpha_0^{(p)},\beta_0^{(p+1)},\alpha_1^{(p)},\beta_1^{(p+1)},\alpha_2^{(p)},\dots,\beta_r^{(p+1)},\alpha_{r+1}^{(p)}\] where each $\{\alpha_i<\beta_i\}\in V$ and $\beta_i>\alpha_{i+1}$. We then have the following theorem on the behaviour of a discrete Morse function $f$ on a $V$-path.

\begin{theorem}[Theorem 3.4, \cite{forman}]
For a discrete Morse function $f$ with an associated gradient vector field $V$, a sequence of simplices is a $V$-path if and only if $\alpha_i<\beta_i> \alpha_{i+1}$ for each $i\in\{0,\dots,r\}$, and
\[f(\alpha^0)\geq f(\beta^0)>f(\alpha^1)\geq f(\beta^1)>\dots\geq f(\beta^r)>f(\alpha^{r+1}).\]
\end{theorem}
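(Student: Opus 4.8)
The plan is to prove both implications of the biconditional by translating between the combinatorial description of $V$ and the Morse values $f$. The bridge is the observation, implicit in the construction of the gradient vector field, that for a facet $\alpha$ of $\beta$ one has $\{\alpha<\beta\}\in V$ if and only if $f(\alpha)\geq f(\beta)$; equivalently, $\{\alpha<\beta\}\notin V$ exactly when $f(\alpha)<f(\beta)$. First I would record this equivalence, since every step below is an instance of it. It follows directly from the recipe that places a pair $\{\gamma,\alpha\}$ into $V$ precisely when the lower-dimensional facet $\gamma$ fails to strictly decrease the Morse value, together with Lemma \ref{lem:headtailorneither}, which guarantees that these pairs assemble into a genuine discrete vector field in which each simplex lies in at most one pair.

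For the backward direction, suppose the sequence satisfies the face relations $\alpha_i<\beta_i>\alpha_{i+1}$ together with the displayed chain of inequalities. Reading off $f(\alpha_i)\geq f(\beta_i)$ from the chain and applying the bridge equivalence gives $\{\alpha_i<\beta_i\}\in V$ for every $i$; since $\beta_i>\alpha_{i+1}$ is assumed, the sequence satisfies the definition of a $V$-path verbatim. This direction amounts to unwinding definitions.

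For the forward direction, suppose the sequence is a $V$-path. The face relations $\alpha_i<\beta_i>\alpha_{i+1}$ hold by definition, and $\{\alpha_i<\beta_i\}\in V$ yields the non-strict inequalities $f(\alpha_i)\geq f(\beta_i)$ immediately via the bridge. The substance lies in the strict inequalities $f(\beta_i)>f(\alpha_{i+1})$. Here I would use that $\alpha_{i+1}$ is a facet of $\beta_i$ distinct from $\alpha_i$: by the first bullet of Definition \ref{def:morsefunction}, $\beta_i$ admits at most one facet whose Morse value is $\geq f(\beta_i)$, and $\alpha_i$ is already such a facet since $\{\alpha_i<\beta_i\}\in V$. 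Hence every other facet of $\beta_i$, and in particular $\alpha_{i+1}$, must satisfy $f(\alpha_{i+1})<f(\beta_i)$. Concatenating the non-strict and strict inequalities in the order $f(\alpha_i)\geq f(\beta_i)>f(\alpha_{i+1})$ for $i=0,\dots,r$ produces the displayed chain.

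I expect the main obstacle to be precisely these strict inequalities, as this is the only point where the fine structure of the Morse condition, rather than the bare definition of $V$, is invoked, and where the distinctness $\alpha_{i+1}\neq\alpha_i$ built into the notion of a $V$-path is essential: were $\alpha_{i+1}$ allowed to coincide with $\alpha_i$, the uniqueness in Definition \ref{def:morsefunction} would no longer force a strict drop and the chain could stall at an equality. Everything else reduces to repeatedly applying the bridge equivalence between membership in $V$ and the sign of the Morse difference across a facet.
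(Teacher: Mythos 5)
Your proof is correct, and it is essentially the standard argument of Forman; note that the paper itself gives no proof of this statement (it is quoted as background, Theorem 3.4 of \cite{forman}), so there is no internal proof to compare against. Your ``bridge'' equivalence $\{\alpha<\beta\}\in V \Leftrightarrow f(\alpha)\geq f(\beta)$ for a facet $\alpha$ of $\beta$ is exactly the definition of the gradient vector field used in the paper, and your use of the first bullet of Definition \ref{def:morsefunction} to force the strict drops $f(\beta_i)>f(\alpha_{i+1})$ is the crux, handled correctly. Two remarks. First, your closing observation that the strict inequalities hinge on $\alpha_{i+1}\neq\alpha_i$ is not just a side comment: it exposes a slip in the paper's transcription of the definition of a $V$-path, which requires only $\{\alpha_i<\beta_i\}\in V$ and $\beta_i>\alpha_{i+1}$ and omits Forman's condition $\alpha_{i+1}\neq\alpha_i$. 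Without that condition the forward implication is false: taking $\alpha_{i+1}=\alpha_i$ gives $f(\alpha_{i+1})=f(\alpha_i)\geq f(\beta_i)$, contradicting the claimed strict inequality. So your proof implicitly (and correctly) uses Forman's definition rather than the paper's. Second, in the backward direction, if distinctness is part of the definition then it too must be verified, but this is immediate from the hypotheses: $f(\alpha_i)\geq f(\beta_i)>f(\alpha_{i+1})$ gives $f(\alpha_i)\neq f(\alpha_{i+1})$, hence $\alpha_i\neq\alpha_{i+1}$. With that one line added, the argument is complete.
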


\begin{theorem}[Theorem 3.5, \cite{forman}]\label{thm:no_cycles}
    A discrete vector field $V$ is the gradient vector field of a discrete Morse function if and only if there are no non-trivial closed $V$-paths.
\end{theorem}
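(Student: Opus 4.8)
The plan is to prove both implications, treating the forward direction as a quick consequence of the preceding theorem and devoting the real work to the converse. For the forward direction, suppose $V$ is the gradient field of a discrete Morse function $f$. By Theorem 3.4 of \cite{forman}, along any $V$-path $\alpha_0,\beta_0,\alpha_1,\dots,\beta_r,\alpha_{r+1}$ the Morse values satisfy $f(\alpha_0)\geq f(\beta_0)>f(\alpha_1)\geq\cdots\geq f(\beta_r)>f(\alpha_{r+1})$, which decreases strictly as soon as one genuine up-and-down step is present. A non-trivial closed $V$-path would have $\alpha_{r+1}=\alpha_0$, forcing $f(\alpha_0)>f(\alpha_0)$, a contradiction; so no such path exists.

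For the converse I would argue through acyclicity of an auxiliary graph. First I would form the modified Hasse diagram $\mathcal{H}_V$: keep every boundary arrow $\sigma\to\gamma$ that is not a pair of $V$, and reverse each pair $\{\alpha<\beta\}\in V$ to an upward arrow $\alpha\to\beta$. A $V$-path is then exactly an alternating directed walk in $\mathcal{H}_V$ (up a $V$-arrow, down a boundary arrow), and a non-trivial closed $V$-path is precisely a directed cycle of this alternating form. The goal is to show that the hypothesis \emph{no non-trivial closed $V$-path} upgrades to \emph{$\mathcal{H}_V$ has no directed cycle at all}, after which a standard acyclic-digraph argument produces the required function.

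The step I expect to be the crux is showing that every directed cycle of $\mathcal{H}_V$ is automatically alternating, hence a closed $V$-path. Because Lemma \ref{lem:headtailorneither} guarantees each simplex lies in at most one pair of $V$, the head of an upward arrow can never be the tail of another, so every up-arrow in a cycle is immediately followed by a down-arrow. Since each arrow changes dimension by exactly $\pm 1$ and a cycle has net dimension change zero, the cycle contains equally many up- and down-arrows; the injection sending each up-arrow to the down-arrow following it is then a bijection, forcing the arrows to strictly alternate. Thus every directed cycle is a non-trivial closed $V$-path, and the hypothesis makes $\mathcal{H}_V$ a finite directed acyclic graph.

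Finally, from acyclicity I would build the Morse function explicitly by setting $f(\sigma)$ equal to the length of the longest directed path in $\mathcal{H}_V$ starting at $\sigma$; then $f$ strictly decreases along every arrow of $\mathcal{H}_V$. It remains to check that this $f:K\to\R$ satisfies Definition \ref{def:morsefunction} and has gradient field exactly $V$: a facet $\gamma$ of $\sigma$ satisfies $f(\gamma)\geq f(\sigma)$ if and only if the arrow joining them is the reversed arrow $\gamma\to\sigma$, that is $\{\gamma<\sigma\}\in V$, and by Lemma \ref{lem:headtailorneither} at most one such facet and at most one such cofacet occur at each simplex, which is exactly the ``at most one exception'' condition. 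Hence the gradient pairs of $f$ coincide with $V$. The only delicate points are confirming the cycle-to-$V$-path correspondence in the crux step and verifying that ``strictly decreasing along $\mathcal{H}_V$'' really yields both bullet points of Definition \ref{def:morsefunction}, rather than merely a function monotone in dimension.
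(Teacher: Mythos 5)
Your proposal is correct, but note that the paper itself gives no proof of this statement --- it is quoted as a background result (Theorem 3.5 of \cite{forman}), and the paper quotes alongside it exactly the two ingredients your argument needs: the monotonicity of $f$ along $V$-paths (Theorem 3.4 of \cite{forman}), which you use for the forward direction, and the acyclic-digraph criterion for the existence of a strictly decreasing function (Theorem 3.6 of \cite{forman}), which powers your converse. Your argument is therefore essentially a faithful reconstruction of Forman's own proof, and the crux step --- that any directed cycle in the modified Hasse diagram must strictly alternate up- and down-arrows, hence is a non-trivial closed $V$-path --- is justified correctly by the at-most-one-pair property together with the net-zero dimension count around a cycle; this formulation also quietly repairs the paper's slightly loose definition of $V$-path (which omits the condition $\alpha_{i+1}\neq\alpha_i$), since the modified Hasse diagram has only one arrow between any incident pair of simplices and so forbids immediate backtracking.
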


\begin{theorem}[Theorem 3.6, \cite{forman}]
    For a directed graph $G$, there is a real-valued function of the vertices that is strictly decreasing along each directed path if and only if there are no directed loops. 
\end{theorem}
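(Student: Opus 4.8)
The plan is to prove the two implications separately, noting that the forward direction is immediate while the content lies in the reverse direction. Throughout I assume $G$ is finite, as is the case for the Hasse diagram of a finite simplicial complex; a directed path is read along boundary/edge arrows, so that a function which strictly decreases across each directed edge automatically decreases along each directed path by transitivity.

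For the forward direction, suppose a real-valued function $f$ on the vertices of $G$ that strictly decreases across every directed edge exists, and suppose for contradiction that $G$ contains a directed loop $v_0 \to v_1 \to \cdots \to v_k = v_0$ with $k \geq 1$. Applying strict decrease across each of the $k$ edges in turn gives the chain $f(v_0) > f(v_1) > \cdots > f(v_k) = f(v_0)$, whence $f(v_0) > f(v_0)$, a contradiction. Hence no directed loop can exist.

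For the reverse direction, suppose $G$ has no directed loops. I would construct $f$ explicitly by setting $f(v)$ to be the length (number of edges) of the longest directed path in $G$ that begins at $v$. The first thing to check is that this is well-defined: in an acyclic graph a directed path cannot repeat a vertex, since a repeated vertex would close a loop, so every directed path has length at most $|V(G)| - 1$ and the maximum defining $f(v)$ is attained. Then, for any directed edge $u \to v$, prepending this edge to a longest path starting at $v$ produces a directed path starting at $u$ of length $1 + f(v)$, so that $f(u) \geq 1 + f(v) > f(v)$. Thus $f$ strictly decreases across every edge, and hence along every directed path, as required.

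The main obstacle is precisely the well-definedness step in the reverse direction: this is the only place where acyclicity (together with finiteness) is genuinely used, ruling out arbitrarily long or non-terminating paths. An alternative route, which makes the role of finiteness even more transparent, is to build a topological ordering by induction. A finite acyclic directed graph always has a sink, namely a vertex with no outgoing edge, since otherwise one could follow outgoing edges indefinitely and, by finiteness, eventually revisit a vertex and produce a loop; assigning such a sink the smallest value, deleting it, and recursing on the remaining acyclic graph yields the desired strictly decreasing function. Either route reduces the theorem to the single combinatorial fact that acyclicity forbids vertex repetition along directed paths.
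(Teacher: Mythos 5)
Your proof is correct, but there is nothing in the paper to compare it against: the statement is quoted as Theorem 3.6 of \cite{forman} and used purely as background, with no proof given in this paper (nor is the result actually needed later — the author remarks that the acyclicity assumption can be dropped for the main results). Your argument is the standard one for finite directed graphs, and it is sound. The forward direction is the trivial one, exactly as you say: a loop would force $f(v_0) > f(v_0)$. For the converse, your choice $f(v) = $ length of the longest directed path starting at $v$ works, and you correctly isolate where acyclicity and finiteness enter: any directed walk in an acyclic graph has distinct vertices, hence length at most $|V(G)|-1$, so the maximum exists; the same observation guarantees that prepending an edge $u \to v$ to a longest path out of $v$ again yields a genuine path, so $f(u) \geq 1 + f(v) > f(v)$. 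Your alternative via repeatedly peeling off sinks (a topological ordering) is equally valid and is closer in spirit to how Forman argues. Your caveat about finiteness is also well placed: the result as used here concerns Hasse diagrams of finite simplicial complexes, and some such hypothesis is genuinely needed for the converse, since the construction by longest paths visibly fails without a bound on path lengths.
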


\begin{remark}
    We find in the following arguments that we in fact have no need to assume that $V$ is cycle-free for our later assertions. For expositional purposes we work with gradient vector fields - however, every instance in the following sections may be replaced with discrete vector fields and the results still hold.
\end{remark}

\subsection{The Hasse diagram of a gradient vector field}
We can encode the information of a gradient vector field on a simplicial complex into a diagram in the following way. 
Given a simplicial complex $K$, and a discrete vector field $V$, for each pair $\{\alpha<\beta\}\in V$ reverse the arrow to point upwards. This is the \textbf{modified Hasse diagram} corresponding to $V$. A $V$-path is a directed path in this diagram whose starting simplex has the same dimension as its finishing index.
\begin{figure}[H]
    \centering
    \begin{tikzpicture}
\node (v1) at (0,0) {123};
\node (v8) at (2,0) {234};
\node (v4) at (1,-1) {23};
\node (v3) at (0,-1) {13};
\node (v2) at (-1,-1) {12};
\node (v9) at (2,-1) {24};
\node (v10) at (3,-1) {34};
\node (v5) at (-1,-2) {1};
\node (v6) at (0.3,-2) {2};
\node (v7) at (1.7,-2) {3};
\node (v11) at (3,-2) {4};
\draw  [-stealth](v1) edge (v2);
\draw  [red][stealth-](v1) edge (v3);
\draw  [-stealth](v1) edge (v4);
\draw  [red][stealth-](v2) edge (v5);
\draw  [-stealth](v2) edge (v6);
 
\draw  [-stealth](v3) edge (v5);
\draw  [-stealth](v3) edge (v7);
\draw  [-stealth](v4) edge (v6);
\draw  [red][stealth-](v4) edge (v7);
\draw  [-stealth](v8) edge (v4);
 
\draw  [-stealth](v8) edge (v9);
\draw  [red][stealth-](v8) edge (v10);
\draw  [red][stealth-](v9) edge (v6);
\draw  [-stealth](v9) edge (v11);
\draw  [-stealth](v10) edge (v7);
\draw  [-stealth](v10) edge (v11);
\end{tikzpicture}
    \caption{The figure shows the modified Hasse diagram of the simplicial complex, where the Morse arrows are displayed in red. Note that the simplex labelled `4' is critical, as there are no Morse arrows adjacent to it. An example of a $V$-path is $13\to 123\to 23\to 2\to 24\to 4$.}
    \label{fig:modHasse1}
\end{figure}
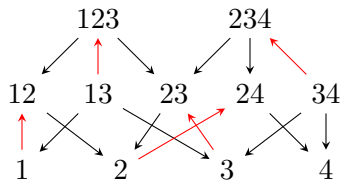

As we travel around the simplicial complex in this way, we would particularly like to pass a critical simplex on the way for reasons I shall explain. 
 
To formalise our journey through the simplicial complex, let us define a few concepts. 
\begin{definition}[A.2, \cite{MorseQuivers}]\label{def:flowline}
    A \textbf{flowline}\footnote{A flowline is also often called a \textbf{gradient flow trajectory} in other texts.} is a path between critical simplices $\alpha$ and $\gamma$ which follows the arrows given in the modified Hasse diagram for the simplicial complex. 
\end{definition}

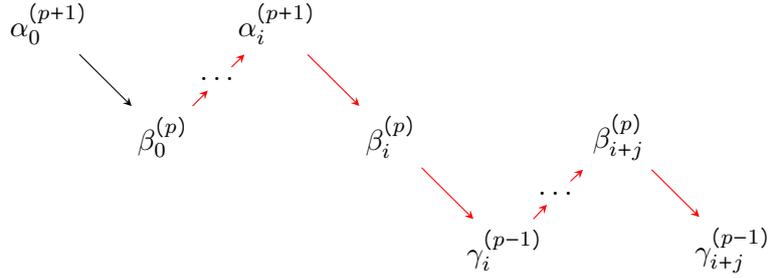
\begin{figure}[H]
    \begin{center}
        \begin{tikzpicture}
        \node (v1) at (-1.5,1.5) {$\alpha_0^{(p+1)}$};
        \node (v2) at (0,0) {$\beta_0^{(p)}$};
        \node (v4) at (1.5,1.5) {$\alpha_i^{(p+1)}$};
        \node (v5) at (3,0) {$\beta_i^{(p)}$};
        \node (v6) at (4.5,-1.5) {$\gamma_i^{(p-1)}$};
        \node (v8) at (6,0) {$\beta_{i+j}^{(p)}$};
        \node (v9) at (7.5,-1.5) {$\gamma_{i+j}^{(p-1)}$};
        \node (v3) at (0.7482,0.7688) {$\dots$};
        \node (v7) at (5.2021,-0.7445) {\dots};
        \draw  [-stealth](v1) edge (v2);
        \draw  [red][-stealth](v2) edge (v3);
        \draw  [red][-stealth](v3) edge (v4);
        \draw  [red][-stealth](v4) edge (v5);
        \draw  [red][-stealth](v5) edge (v6);
        \draw  [red][-stealth](v6) edge (v7);
        \draw  [red][-stealth](v7) edge (v8);
        \draw  [red][-stealth](v8) edge (v9);
        \end{tikzpicture}
         
        \caption{The figure shows a flowline through a simplicial complex, where the $\alpha$ simplices have dimension $p+1$, $\beta$ simplices have dimension $p$ and $\gamma$ simplices have dimension $p-1$. The $V$-path corresponding to the flowline is the path shown in red. Note that the downward arrows are via the boundary arrow, and the upward arrows are via the Morse function. The first and last simplices in the flowline are critical.}
        \label{fig:FlowTrajABC}
    \end{center}
\end{figure}
From this we can conclude that the first and last arrow must drop a dimension, given that critical simplices belong only to down-arrows.
\begin{definition}
    The \textbf{index} of a flow trajectory is the difference in dimension of the starting and concluding simplices. 
\end{definition}
Given that the dimension must decrease throughout the flow trajectory from critical $\alpha$ to critical $\gamma$, we may write
\[\Ind(\text{flowline}) = \dim(\alpha)-\dim(\gamma).\]
From here on, we use the term `flowline' to refer only to a flowline of index 2.

\begin{definition}
    For a gradient vector field $V$ on a simplicial complex with some flow trajectory $F:=\alpha_0^{(n+1)}\to\gamma_k^{(n-1)}$, the \textbf{intermediate simplex} $\beta_i$ is an $n$-simplex belonging to two pairs \[\{\alpha_i^{(n+1)},\beta_i^{(n)}\},\{\beta_i^{(n)},\gamma_i^{(n-1)}\}\in V\] with simplices of different dimension. 
\end{definition}
This is to say that if the flowline drops two dimensions, the intermediate simplex is the simplex at the point of the double drop. 

\begin{definition}
    A \textbf{critical flowline}\footnote{In differential geometry, critical flowlines are known more commonly as broken flowlines.} is a flowline with a critical simplex as an intermediate simplex. 
\end{definition}
We note that if the critical simplex $\beta$ is an intermediate simplex, then since it must have only down-arrows adjacent to it, the intermediate passage through $\beta$ must be a ``double drop point''. 
This is to say that the simplex before $\beta$ has one dimension more, and the simplex after has one dimension fewer.

\subsection{Orientation}
We define orientation on a simplex as an ordering on its vertices. On 1-simplices and 2-simplices, defining an orientation follows general intuition (see Figure \ref{fig:2simplexOrient}).
\begin{figure}[H]
    \centering
        \begin{tikzpicture}
        \node (v1) at (0,2) {1};
        \node (v2) at (-1.2,0.2) {2};
        \node (v3) at (1.2,0.2) {3};
        \draw  (v1) edge (v2);
        \draw  (v2) edge (v3);
        \draw  (v3) edge (v1);
        \draw  [red][-stealth]plot[smooth, tension=1] coordinates {(-0.2,0.8) (0,0.6) (0.2,0.8) (0.0379,0.9905)(-0.1457,0.9363)};
        \node (v4) at (-0.4,0.2) {};
        \node (v5) at (0.4,0.2) {};
        \node (v6) at (0.8,0.8) {};
        \node (v7) at (0.4,1.4) {};
        \node (v8) at (-0.4,1.4) {};
        \node (v9) at (-0.8,0.8) {};
        \draw  [red][-stealth](v4) edge (v5);
        \draw  [red][-stealth](v6) edge (v7);
        \draw  [red][-stealth](v8) edge (v9);
        \end{tikzpicture}
    \caption{The figure shows a 2-simplex endowed with an orientation, where the arrows on the edges are the induced orientation on the 1-simplices. 
    The orientation induced on the edge (12) induces some orientation on the vertex (1) which is opposite to the orientation that the edge (13) induces on the same vertex.
    }
    \label{fig:2simplexOrient}
\end{figure}
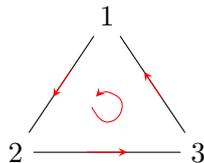
We can compare orientations between a simplex and its faces. Thus, each path in the Hasse diagram obtains a sign $(\pm 1)$.

We see from the figure that no matter what the orientation defined on the critical simplices, the induced orientation on some vertex $v$ via the edge $\beta_1$, say 1 without loss of generality, must be the opposite from the induced orientation on the vertex $v$ via $\beta_2$, say $(-1)$. We will prove that the paths through $\beta_1$ and $\beta_2$ have opposite signs. 
We find then that the action in Figure \ref{fig:FlopInHasse} taking the red path to the blue path flips the sign on the lower dimensional simplex in the case of 2-simplices. We wish to use induction to prove that this is the case for all simplices, but this is a bit tricky as there does not exist such an intuitive way for defining orientation on higher dimensional simplices as there does for triangles and edges.

For a simplex, we may define the `orientation' via a global ordering `$<$' placed on the vertices. For instance, on a 3-simplex we can endow the orientation given by the ordering $1>2>4>3$.
\begin{figure}[H]
    \centering
    \begin{tikzpicture}
    \node (v1) at (0,2) {1};
    \node (v2) at (-1.2,0.2) {2};
    \node (v3) at (0.8,0.1) {3};
    \draw  (v1) edge (v2);
    \draw  (v2) edge (v3);
    \draw  (v3) edge (v1);
    \node (v4) at (1.6,0.8) {4};
    \draw  (v3) edge (v4);
    \draw  [dashed](v2) edge (v4);
    \draw  (v1) edge (v4);
    \draw  [blue][-stealth]plot[smooth, tension=.7] coordinates {(-0.2593,1.8023) (-1.1255,0.5354)};
    \draw  [blue][-stealth]plot[smooth, tension=.7] coordinates {(-0.9962,0.348) (1.3436,0.8456)};
    \draw  [blue][stealth-]plot[smooth, tension=.7] coordinates {(1.0915,0.2058) (1.4341,0.5225)};
    \draw  [blue][-stealth]plot[smooth, tension=.7] coordinates {(0.5808,0.3027) (-0.0008,1.7312)};
    \end{tikzpicture}
    \caption{The figure shows a possible orientation on a tetrahedron.}
    \label{fig:tetrorient}
\end{figure}
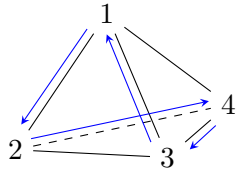
A way of defining orientation on a simplex of higher dimension is via an ordering of the vertices.
Let $\alpha$ be some $n$-simplex, with some orientation.
 
This orientation corresponds to defining an ordering `$>$' on the $n+1$ vertices in $\alpha$. 
For an arrow between simplices $\alpha^{(n)}$ and $\beta^{(n-1)}$ whose dimensions differ by one, we can use the orientation on each simplex to induce a sign on the arrow in a modified Hasse diagram between the simplices.
 
We assign to each arrow of a modified Hasse diagram an element of $\{\pm1\}$ by comparing orderings. 
For $\alpha$ an $n$-simplex with orientation defined by the order $>$, we find that by removing a vertex\footnote{When we remove a vertex from a simplex, we must also remove all simplices containing that vertex by implication, before considering it in a simplicial way.}, we obtain an $(n-1)$-simplex $\beta$. The orientation on $\alpha$ induces a sign on the arrow $\alpha\to\beta$.
\begin{definition}
    Fix an orientation for all simplices on $K$ by ordering $V$. Let $a\coloneq \alpha\to\beta$ be an arrow in the modified Hasse diagram of a simplicial complex. Assume $\alpha$ is defined by $[v_0,\dots,v_n]$, its ordered list of vertices. If $\beta=\alpha\backslash\{v_i\}$ then the sign $\theta(a)$ of the arrow $a$ of the Hasse diagram is $(-1)^i$.
\end{definition}

Using this definition of sign, we wish to define sign on a path.

\begin{definition}[p121, \cite{skoldberg}]
    Let $P$ be some path through a simplicial complex via edges in the modified Hasse diagram, and $\ell(P)$ be the number of arrows in path $P$. Then the sign of $P$ is given by
    \[\theta(P)\coloneq\left(\prod_{a\in P}\theta(a)\right)\left((-1)^{\frac{\ell(P)-\Ind(P)}{2}}\right).\]
\end{definition}
Using the definition of sign on a path, we can show the following identity.
\begin{lemma}\label{lem:multiplicative sign}
    For a simplicial complex $K$ with gradient vector field $V$, and two paths $P_1,P_2$ through $K$ whose composition $P_1\circ P_2$ is also a path through $K$, 
    \[\theta(P_1)\cdot\theta(P_2)=\theta(P_1\circ P_2)\]
    holds.
\end{lemma}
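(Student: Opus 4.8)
The plan is to unwind the definition of $\theta$ on the composite path $P_1\circ P_2$ and show that each of its three constituent pieces factors as the product of the corresponding pieces for $P_1$ and $P_2$. Fix the convention that $P_1\circ P_2$ is the concatenation in which $P_1$ is traversed first and $P_2$ second, the two meeting at a single shared simplex. The arrows of $P_1\circ P_2$ are then exactly the arrows of $P_1$ together with the arrows of $P_2$, with no arrow deleted, created, or shared. Hence the local sign product splits immediately:
\[\prod_{a\in P_1\circ P_2}\theta(a)=\left(\prod_{a\in P_1}\theta(a)\right)\left(\prod_{a\in P_2}\theta(a)\right).\]

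The next step is to establish that both $\ell$ and $\Ind$ are additive under composition. Additivity of $\ell$ is immediate, since the number of arrows in a concatenation is the sum of the two counts. For the index, if $P_1$ runs from $\alpha$ to $\beta$ and $P_2$ from $\beta$ to $\gamma$, then $\Ind(P_1)+\Ind(P_2)=(\dim\alpha-\dim\beta)+(\dim\beta-\dim\gamma)=\dim\alpha-\dim\gamma=\Ind(P_1\circ P_2)$, so the dimension of the shared simplex telescopes away. Consequently the exponent in the correcting sign is additive:
\[\frac{\ell(P_1\circ P_2)-\Ind(P_1\circ P_2)}{2}=\frac{\ell(P_1)-\Ind(P_1)}{2}+\frac{\ell(P_2)-\Ind(P_2)}{2},\]
and therefore $(-1)^{(\ell-\Ind)/2}$ also factors as a product of the two correcting signs. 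Combining this with the previous display and the definition of $\theta$ yields $\theta(P_1\circ P_2)=\theta(P_1)\cdot\theta(P_2)$ directly.

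Before that, I would record the well-definedness observation underpinning the correcting sign. If a path has $u$ up-arrows (Morse arrows) and $d$ down-arrows (boundary arrows), then $\ell=u+d$ while the net dimension change gives $\Ind=d-u$, so that $\frac{\ell-\Ind}{2}=u$ is exactly the number of up-arrows, a non-negative integer. This simultaneously confirms that the exponent is always integral and makes the additivity of $\frac{\ell-\Ind}{2}$ transparent, since the number of up-arrows in a concatenation is visibly the sum of the two counts; one could in fact take this reformulation as the cleanest route to the whole identity.

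The argument involves no genuine obstacle: it is a matter of checking that the product of arrow signs, the length, and the index each behave additively (respectively multiplicatively) under concatenation. The only point requiring care is the telescoping of the shared simplex's dimension in the index computation, together with fixing the composition convention so that $P_1$ and $P_2$ meet at a single common endpoint simplex; once these are pinned down, the three factorisations assemble to give the claim.
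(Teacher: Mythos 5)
Your proof is correct and follows essentially the same route as the paper's: both arguments rest on the additivity of $\ell$ and $\Ind$ under concatenation so that the correcting sign $(-1)^{(\ell-\Ind)/2}$ factors, together with the (trivial) multiplicativity of the arrow-sign product $\prod_{a}\theta(a)$. If anything, your version is the more carefully written of the two --- you treat the arrow-sign product explicitly where the paper leaves it implicit, and your observation that $\frac{\ell-\Ind}{2}$ counts the up-arrows cleanly justifies integrality of the exponent.
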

\begin{proof}
    For $P_1\circ P_2$ some path through $K$, we use the fact that $\Ind(P_1\circ P_2)=\Ind(P_1)+\Ind(P_2)$ to show that 
    \begin{align*}
        \left((-1)^{\frac{\ell(P_1\circ P_2)}{2}-\frac{\Ind(P_1\circ P_2)}{2}}\right)&=\left((-1)^{\frac{\ell(P_1)}{2}+\frac{\ell(P_2)}{2}-\frac{(\Ind(P_1)}{2}+\frac{\Ind(P_2)}{2}}\right)\\
        &=\left((-1)^{\frac{\ell(P_1)-\Ind(P_1)}{2}+\frac{\ell(P_2)-\Ind(P_2)}{2}}\right)\\
        &=\left((-1)^{\frac{\ell(P_1)-\Ind(P_1)}{2}}\right)+\left((-1)^{\frac{\ell(P_2)-\Ind(P_2)}{2}}\right)\\
        &=\theta(P_1)\cdot\theta(P_2),
    \end{align*}
    as required.
\end{proof}

We wish to use this to show that every pair of critical flowlines have opposite signs.
First, though, let us rigorously define the boundary function $\delt$.

\subsection{Setting boundaries}
We now wish to unify the concept of a boundary arrow with the idea of taking a simplex to one of its facets in an algebraic structure. 

\begin{definition}
    We say that the simplicial $i$-chains of $K$, denoted $C_i(K,\Z)$, is the free abelian group generated by the $i$-simplices of $K$.
\end{definition}
We can define a homology of a simplicial complex. For $\alpha,\beta$ simplices such that $\dim(\alpha)=\dim(\beta)+1$, let $\M(\alpha,\beta)$ be the set (or `0-dimensional manifold') of flowlines from simplex $\alpha$ to $\beta$ when $K$ is equipped with the canonical Morse function $f=(\sigma)=\dim(\sigma)$, and let $B$ be the set of critical $(i-1)$-simplices. Then the differentials on simplicial chains $\delt_i:C_i(K,\Z)\to C_{i-1}(K,\Z)$ are precisely the boundary arrows from the space of $i$-simplices to the space of $(i-1)$-simplices given by $\delt\left(\alpha^{(i)}\right)=\sum_{\beta\in B}\# \M(\alpha,\beta)\cdot \beta$. 
We prove that
\[\del_{i-1}\circ \del_i=0,\]
in Lemma \ref{lem:floppability assertion}.
Indeed, we can define homology groups as follows.

\begin{definition}[The $i$th Homology group]
    Let $K$ be a simplicial complex, where $K_i$ is the set of all $i$-dimensional simplices, and $\del_i$ is the boundary arrow $\del_i:C_i(K,\Z)\to C_{i-1}(K,\Z)$. 
    Then, the $i$th homology group is given by \[H_i\coloneq \frac{\ker(\del_i)}{\im(\del_{i+1})}.\]
\end{definition}

\section{The Morse geometric identity}\label{sec:The fundamental lemma of Morse theory}

In this section, we will define an algorithm by which we generate a one-dimensional manifold of index 2 flowlines through a simplicial complex equipped with a Morse function. We will show the algorithm acting on two examples. Finally, we see how the sign of a path is affected by the substeps of the algorithm. 

\begin{definition}
    Recall that $\M(\alpha,\beta)$ is the set of flowlines from simplex $\alpha$ to $\beta$. 
    Each point in $\M(\alpha,\beta)$ is a flowline, with a sign of $\pm 1$, from $\alpha$ to $\beta$, where $\dim(\alpha)=\dim(\beta)+1$. When the index is 1 we denote by $\#\M(\alpha,\beta)$ the signed count of such points: in other words, it is the number of flowlines with sign $(+1)$ minus the number of flowlines with sign $(-1)$.
    A Morse chain is a linear combination of critical simplices, where the linear combination of critical $p$-simplices is denoted $C_p^{\morse}(K,f)$. 
    The Morse differential, $\delt$, applied to a critical simplex $\alpha^{(p+1)}$ is defined by structure coefficients given by the size of the moduli space made of the flowlines between $\alpha$ and each critical simplex $\beta$ of dimension $p$. This is to say that $\delt\alpha$ is the sum of critical simplices $\beta$ of dimension $p$ each weighted by the signed count
    of flowlines $\alpha\to \beta$:
    \begin{equation}\label{eq:boundarymap}
        \delt\left(\alpha^{(p+1)}\right)=\sum_{\beta^{(p)} \text{ crit}}\# \M(\alpha,\beta)\cdot \beta.
    \end{equation}
\end{definition}

Under the standard inner product $\langle\cdot,\cdot\rangle$ on $C^{\morse}_{\bullet}(K,f)$ the space of Morse chains, we can also express equation \ref{eq:boundarymap} via the relation $\langle\delt \alpha,\beta\rangle\coloneq \#\M(\alpha,\beta)$.
The fundamental result of \cite{forman} is that $\delt$ gives $C^{\morse}_{\bullet}(K,f)$ the structure of a chain complex.
\begin{theorem}\label{thm:d2=0}
    The Morse differential, $\delt$, satisfies the relation that for all critical $\alpha^{(p+1)}, \gamma^{(p-1)}$ we have that $\langle\delt^2(\alpha),\gamma\rangle=0$ . 
\end{theorem}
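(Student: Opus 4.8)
The plan is to unpack the pairing $\langle\delt^2(\alpha),\gamma\rangle$ into a signed count of critical flowlines and then exhibit a sign-reversing, fixed-point-free involution on them, so that the count cancels to zero. First I would expand the inner product: by the definition of $\delt$ in \eqref{eq:boundarymap} and bilinearity,
\[
\langle\delt^2(\alpha),\gamma\rangle=\sum_{\beta^{(p)}\text{ crit}}\#\M(\alpha,\beta)\cdot\#\M(\beta,\gamma).
\]
Each summand counts, with sign, pairs consisting of an index-$1$ flowline $\alpha\to\beta$ and an index-$1$ flowline $\beta\to\gamma$. Concatenating such a pair produces an index-$2$ flowline $\alpha\to\gamma$ whose intermediate simplex is the critical $p$-simplex $\beta$ — precisely a critical flowline through $\beta$. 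By Lemma \ref{lem:multiplicative sign} the sign of a concatenation is the product of the two signs, so $\langle\delt^2(\alpha),\gamma\rangle$ is exactly the signed count of critical flowlines from $\alpha$ to $\gamma$.

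Next I would realise these critical flowlines as the boundary points of a one-dimensional moduli space. The double-drop point of a flowline can be pushed across the intermediate simplex by the Flop operation of Figure \ref{fig:introExistFlop}, and iterating this gives the algorithm of Section \ref{sec:The fundamental lemma of Morse theory} that evolves any index-$2$ flowline through neighbouring flowlines. Treating each flowline as a point and each elementary Flop as an edge, I would argue that $\M(\alpha,\gamma)$ carries the structure of a simplicial $1$-manifold (Lemma \ref{lem:2critFsPerEquivClass}), whose boundary vertices are exactly the critical flowlines, since the evolution can only terminate where the intermediate simplex is critical. Because $K$ is finite there are only finitely many flowlines, so this moduli space is automatically finite; this finiteness plays the role that compactness plays in the geometric argument of \cite{audin2014morse}.

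I would then establish that the algorithm restricts to a fixed-point-free involution on the set of critical flowlines (Lemma \ref{lem:algInvolutivity}): starting at a boundary vertex of $\M(\alpha,\gamma)$ and following its edges reaches a unique second boundary vertex, and doing the same from that vertex returns to the start. A compact simplicial $1$-manifold decomposes into circles, which contribute no critical flowlines, and intervals, each of which contributes exactly the two endpoints that the involution swaps. Thus every critical flowline has exactly one distinct partner.

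Finally, and this is the crux, I would show in Theorem \ref{thm:explicitCancellationOfFlowlines} that a critical flowline and its partner carry opposite signs. The model case is the triangle of Figures \ref{fig:intro2simp2} and \ref{fig:FlopInHasse}: the orientation induced on the shared $0$-simplex via one edge is opposite to that induced via the other, so the two paths have opposite sign. For the general case I would track the sign formula $\theta(P)=\bigl(\prod_{a\in P}\theta(a)\bigr)(-1)^{(\ell(P)-\Ind(P))/2}$ across the flowline and show that a single Flop at the double-drop point flips exactly one arrow-sign while preserving the length and index, hence leaving the parity correction unchanged and reversing $\theta$; Lemma \ref{lem:multiplicative sign} then propagates this through the unaltered prefix and suffix of the flowline. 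The main obstacle is exactly this sign bookkeeping: one must verify that the orientation comparison at the Flop behaves uniformly in all dimensions rather than only for triangles, and that evolving the double-drop point across non-trivial stretches of the flowline — where $\beta'$ may or may not equal $\beta$ — introduces no compensating sign changes elsewhere. Granting the sign reversal, the involution pairs the critical flowlines into cancelling $\pm 1$ contributions, so the signed count vanishes and $\langle\delt^2(\alpha),\gamma\rangle=0$.
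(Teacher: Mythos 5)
Your proposal follows the same architecture as the paper's proof: expand $\langle\delt^2(\alpha),\gamma\rangle$ into a signed count of critical flowlines (Lemma \ref{lem:delt.is.orientation}), identify these with the boundary points of the one-dimensional moduli space $\M(\alpha,\gamma)$ generated by the algorithm (Lemmas \ref{lem:beta.is.unimportant(no offence beta)} and \ref{lem:2critFsPerEquivClass}), pair them off using involutivity (Lemma \ref{lem:algInvolutivity}), and cancel each pair by a sign argument (Theorem \ref{thm:explicitCancellationOfFlowlines}). So the decomposition and key lemmas are the paper's own.

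However, there is a genuine gap at exactly the step you flag as the crux. You attribute the sign reversal between a critical flowline and its partner to the Flop alone: a Flop preserves length and index, flips the arrow-sign product, and you then ``grant'' the rest. But the passage from one critical flowline to its partner is not a single Flop, nor a sequence of Flops: by the structure of the algorithm it consists of one initial Flop followed by $k$ chunks, each of which is either (Cancel, Flop) or (Insert, Flop). Insert and Cancel change the length of the flowline by $\pm 2$, so the parity factor $(-1)^{(\ell(P)-\Ind(P))/2}$ in the definition of $\theta$ is sensitive to them; in fact each of them also reverses the sign, since the two inserted or removed copies of the same arrow contribute $\theta(x)^2=1$ to the product while the parity factor flips. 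If, as your sketch implicitly assumes, Insert and Cancel were sign-neutral, then the two endpoints would differ in sign by $(-1)^{1+k}$ (the number of Flops being $1+k$), which equals $+1$ whenever $k$ is odd, and the cancellation would fail. The paper's resolution consists precisely of the pieces you omit: the ``flip of the Insert'' and ``flip of the Cancel'' lemmas, together with Lemma \ref{1mod2floperations} showing that the total number of floperations, $1+2k$, is always odd; these combine into Corollary \ref{cor:theta.alg=-theta.alg.alg}, which is the statement your final paragraph needs. Note also that Lemma \ref{lem:multiplicative sign} cannot fill this hole, because multiplicativity of $\theta$ over concatenation says nothing about operations that change a flowline's length. (A smaller imprecision: a Flop replaces two arrows, not one; it is the product of their two signs that flips, as in Lemma \ref{lem:flip_of_flop}.)
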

This is to say, $\delt^2=0$. 
However, it feels vague and hard to conceptualise so far, so let us give a sketch of how we intend to prove this.
\begin{enumerate}[label = (\alph*)]
    \item We first show in Lemma \ref{lem:delt.is.orientation} that $\langle\delt(\alpha^{(p+1)}),\gamma^{(p-1)}\rangle$ is equivalent to taking the signed count of paths $\alpha\to\gamma$ through every possible critical $\beta$.
    \item We define an algorithm on index 2 flowlines in Definition \ref{def:algorithm} which terminates at a flowline which passes through a critical simplex. The algorithm defines an equivalence relation of flowlines, which we prove in \ref{lem:equiv.rel}. This, with the help of Lemma \ref{lem:algInvolutivity} gives us that the list of flowlines $\alpha^{(p+1)}\to\gamma^{(p-1)}$ generated by the algorithm endows $\M(\alpha,\gamma)$ with the structure of a 1-manifold, as shown in Lemma \ref{lem:2critFsPerEquivClass}.
    This tells us that every boundary flowline of $\M(\alpha,\gamma)$ has a unique and distinct partner flowline, where the algorithm is an involutive way of passing between the two.
    \item We then show that by taking only the critical flowlines, we can count these flowlines without identifying a critical intermediate simplex $\beta^{(p)}$ in each case, as proved in Lemma \ref{lem:beta.is.unimportant(no offence beta)}.
    We prove also that the boundary flowlines of $\M(\alpha,\gamma)$ correspond exactly with critical flowlines between $\alpha$ and $\gamma$.
    \item Following the definition of the algorithm, in Section \ref{subsec:florientations} we show that every substep - or `floperation' - of the algorithm negates the sign of the path. We then show in Lemma \ref{1mod2floperations} that there are always an odd number of floperations in the journey between one critical flowline and its unique and distinct partner, so that the sign of the unique distinct partner of critical $\widetilde{F}$ is the negative of the sign of $\widetilde(F)$.
\end{enumerate}
We expand on the following proof of Theorem \ref{thm:d2=0} throughout the rest of the paper.
\begin{proof}
    For $\alpha$ some critical $(n+1)$-simplex in a simplicial complex $K$ on which Morse function is defined, $B$ the set of critical $n$-simplices, $\Gamma$ the set of $(n-1)$-simplices, $\F_{\beta}(\alpha,\gamma)$ the set of flowlines with $\beta$ as an intermediate simplex, $\F(\alpha,\gamma)$ the set of all critical flowlines, and $\E(\alpha,\beta)$ the set of connected components of $\M(\alpha,\gamma)$,
    we have that

\begin{align*}
    \delt^2(\alpha)&=
        \sum_{\beta \in B}\sum_{\gamma \in \Gamma}\# \M(\alpha,\beta)\cdot\#\M(\beta,\gamma)\cdot \gamma \quad&&\text{by Lemma \ref{lem:floppability assertion}}\\
        &=\sum_{\gamma \in \Gamma}\sum_{\beta \in B}\sum_{F\in \F(\alpha,\gamma)_{\beta}}\theta(F)\cdot \gamma\quad &&\text{by observation}\\
        &=\sum_{\gamma \in \Gamma}\sum_{F\in \F(\alpha,\gamma)}\theta(F)\cdot \gamma\quad&&\text{by Lemma \ref{lem:del=boundarysum2}}\\
        &=\sum_{\gamma\in\Gamma}\sum_{e\in\E(\alpha,\beta)}\left(\theta(\Alg(F_i))\cdot\gamma+\theta(\Alg(\Alg(F_i)))\cdot\gamma\right)\quad&&\text{by Corollary \ref{cor:theta.alg=-theta.alg.alg}}
        \\
        &=\sum_{\gamma\in\Gamma}\sum_{e\in\E(\alpha,\beta)}\left(\theta(\Alg(F_i))\cdot\gamma-\theta(\Alg(F_i))\cdot\gamma\right)
        \\
        &=0.
\end{align*}
\end{proof}

Let us tackle the first step.
\begin{lemma}\label{lem:delt.is.orientation}
    Let $\alpha$ be a $(p+1)$-simplex, $B$ be the set of critical $p$-simplices and $\Gamma$ the set of $(p-1)$-simplices. Furthermore, for $\beta\in B$, let $\F_{\beta}(\alpha,\gamma)$ be the set of flowlines $\alpha\to\gamma$ with a double drop at $\beta$, $\F(\alpha,\gamma)$ be the set of critical flowlines $\alpha\to \gamma$, and $\theta$ be the sign on a flowline. Then using \ref{lem:multiplicative sign},
    we find that
    \[
        \sum_{\beta \in B}\sum_{\gamma \in \Gamma}\# \M(\alpha,\beta)\cdot\#\M(\beta,\gamma)\cdot \gamma=\sum_{\gamma \in \Gamma}\sum_{\beta \in B}\sum_{F\in \F_{\beta}(\alpha,\gamma)}\theta(F)\cdot \gamma\]
\end{lemma}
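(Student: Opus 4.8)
The plan is to unfold both signed counts on the left-hand side into sums of signs of individual index~$1$ flowlines, reorganise the resulting double sum as a single sum over concatenated paths, and identify those concatenations with the critical flowlines through $\beta$.

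First I would use the definition of $\#\M$ as a signed count to write $\#\M(\alpha,\beta)=\sum_{F_1\in\M(\alpha,\beta)}\theta(F_1)$ and $\#\M(\beta,\gamma)=\sum_{F_2\in\M(\beta,\gamma)}\theta(F_2)$, where each $F_1$ is an index~$1$ flowline $\alpha\to\beta$ and each $F_2$ an index~$1$ flowline $\beta\to\gamma$. Distributing the product then gives
\[
\#\M(\alpha,\beta)\cdot\#\M(\beta,\gamma)=\sum_{F_1\in\M(\alpha,\beta)}\sum_{F_2\in\M(\beta,\gamma)}\theta(F_1)\theta(F_2).
\]

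The central step is to exhibit a bijection between the pairs $(F_1,F_2)$ and the set $\F_{\beta}(\alpha,\gamma)$, given by concatenation $(F_1,F_2)\mapsto F_1\circ F_2$. In the forward direction, since $\beta$ is critical it is adjacent only to down-arrows, so the final arrow of $F_1$ into $\beta$ and the first arrow of $F_2$ out of $\beta$ are both boundary arrows; the junction at $\beta$ is therefore a genuine double drop and in particular never violates the rule that the dimension cannot rise on two consecutive steps. Hence $F_1\circ F_2$ is a legal flowline of index $(p+1)-(p-1)=2$ whose intermediate simplex is the critical simplex $\beta$, i.e.\ an element of $\F_{\beta}(\alpha,\gamma)$. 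Conversely, any critical flowline with a double drop at $\beta$ is cut uniquely at $\beta$ into an index~$1$ flowline $\alpha\to\beta$ and an index~$1$ flowline $\beta\to\gamma$, which supplies the inverse map.

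Finally I would apply Lemma~\ref{lem:multiplicative sign} to each concatenation, so that $\theta(F_1)\theta(F_2)=\theta(F_1\circ F_2)$, and transport the double sum across the bijection to obtain $\#\M(\alpha,\beta)\cdot\#\M(\beta,\gamma)=\sum_{F\in\F_{\beta}(\alpha,\gamma)}\theta(F)$. Summing over $\beta\in B$ and $\gamma\in\Gamma$ and reordering the finite sums $\sum_{\beta}\sum_{\gamma}=\sum_{\gamma}\sum_{\beta}$ then yields the claimed identity. I expect the only real subtlety to be the verification of the bijection at the junction — namely that concatenation always produces a legal flowline and that the decomposition of a critical flowline at its critical intermediate simplex is unique — which is precisely where the criticality of $\beta$ (only down-arrows adjacent) is used.
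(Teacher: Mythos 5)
Your proposal is correct and follows essentially the same route as the paper's proof: both reduce to the identity $\#\M(\alpha,\beta)\cdot\#\M(\beta,\gamma)=\sum_{F\in\F_{\beta}(\alpha,\gamma)}\theta(F)$ for fixed $\beta,\gamma$, establish it via the correspondence between pairs of index~$1$ flowlines and index~$2$ flowlines through $\beta$ together with Lemma~\ref{lem:multiplicative sign}, and then swap the finite sums. The only difference is presentational --- the paper tallies positive and negative paths as $(n_1-n_2)(m_1-m_2)=l_1-l_2$ where you distribute the product of signed sums directly, and your explicit verification that concatenation at the critical $\beta$ is legal and that the cut is unique is a point the paper leaves implicit.
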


\begin{proof}
    To prove this claim it suffices to show that for given $\gamma\in \Gamma$ and $\beta\in B$, we have that 
    \[\# \M(\alpha,\beta)\cdot\#\M(\beta,\gamma)=\sum_{F\in \F_{\beta}(\alpha,\gamma)}\theta(F).\]

    Let $\F_{\beta}(\alpha,\gamma)$ have $l_1$ paths with sign $(+1)$ (i.e. paths $L_1$ with $\theta(L_1)=1$) and $l_2$ paths with sign $(-1)$ (i.e. paths $L_2$ with $\theta(L_2)=-1$). Then $\sum_{F\in \F_{\beta}(\alpha,\gamma)}\theta(F)=l_1-l_2$. This is the signed count of paths $\alpha\to\gamma$ through $\beta$.
    
    Indeed, $\#\M(\alpha,\beta)$ is the signed count of paths from $\alpha$ to $\beta$: say $\M(\alpha,\beta)$ has $n_1$ paths with sign $(+1)$ and $n_2$ paths with sign $(-1)$. Then $\#\M(\alpha,\beta)=\sum_{P\in\M(\alpha,\beta)} \theta(P)=n_1-n_2$. 
    Assuming also that $\M(\beta,\gamma)$ has $m_1$ paths with sign $(+1)$ and $m_2$ paths with sign $(-1)$, $\#\M(\beta,\gamma)=\sum_{P\in\M(\beta,\gamma)} \theta(P)=m_1-m_2$.

    For any path $P_1 \in \M(\alpha,\beta)$ and any $P_2\in\M(\alpha,\beta)$, the sign of the composite path $P_1\circ P_2$ is $\theta(P_1)\cdot\theta(P_2)$. 
    Suppose a path $P_1$ in $\M(\alpha,\beta)$ has sign $(+1)$. The signed count of paths from $\beta$ to $\gamma$ is $m_1-m_2$, so the signed count of all paths from $\alpha$ to $\gamma$ running first through path $P_1$ is $(+1)\cdot (m_1-m_2)$. Picking instead some $P_2\in\M(\alpha,\beta)$ with sign $(-1)$, the signed count of paths running first through $P_2$ is $(-1)\cdot(m_1-m_2)$.
    
    Since every path $F\in \F_{\beta}(\alpha,\gamma)$ is composed of some path $\alpha\to \beta$ and some path $\beta \to\gamma$, we see that the signed count of paths $\alpha\to\gamma$ through $\beta$ is $(n_1-n_2)\cdot (m_1-m_2)$, giving us the required inequality
    \[\# \M(\alpha,\beta)\cdot\#\M(\beta,\gamma)=(n_1-n_2)\cdot (m_1-m_2)=l_1-l_2=\sum_{F\in \F_{\beta}(\alpha,\gamma)}\theta(F).\]

    Considering that $B$ and $\Gamma$ are finite sets, we are perfectly justified in swapping the order of series, in the following way:
    \begin{align*}
        \sum_{\beta \in B}\sum_{\gamma \in \Gamma}\# \M(\alpha,\beta)\cdot\#\M(\beta,\gamma)\cdot \gamma
        &=\sum_{\beta \in B}\sum_{\gamma \in \Gamma}(n_1-n_2)\cdot (m_1-m_2)\cdot \gamma\\
        &=\sum_{\gamma \in \Gamma}\sum_{\beta \in B}(n_1-n_2)\cdot (m_1-m_2)\cdot \gamma\\
        &=\sum_{\gamma \in \Gamma}\sum_{\beta \in B}(l_1-l_2)\cdot \gamma\\
        &=\sum_{\gamma \in \Gamma}\sum_{\beta \in B}\sum_{F\in \F_{\beta}(\alpha,\gamma)}\theta(F)\cdot \gamma.
    \end{align*}
    Then the equality in the lemma holds.
\end{proof}
 
\subsection{The canonical Morse function}\label{subsec:CanonicalMorseFunction}
In this section, we show that $\delt^2=0$ for a simplicial complex on which the canonical Morse function is defined. This recovers Lemma \ref{lem:headtailorneither}.

In the case of the canonical Morse function $f$, we have that $f(\sigma)=\dim(\sigma)$ for all $\sigma\in K$, which is to say that every simplex is critical, as those with lower dimension have lower Morse value, and those with higher dimension have higher Morse value. In our first discussions of Hasse diagrams, all the arrows go downwards.  

We claim that $\delt^2(\alpha)=0$ for all $\alpha$ in a simplicial complex endowed with the canonical Morse function. Indeed, it is enough to show that for all $\alpha^{(n+1)},\gamma^{(n-1)}$ there are exactly 2 $n$-simplices $\beta_1,\beta_2$ such that both $\beta_1,\beta_2$ are facets of $\alpha$ and $\gamma$ is a facet of both $\beta_1$ and $\beta_2$, and that these two paths have opposite signs.
Equivalently, we wish to show that for each double drop in dimension 
\[\alpha^{(n+1)}\to \beta_1^{(n)}\to \gamma^{(n-1)}=:P_{1}\]
there is a unique $\beta_2^{(n)}$ such that $\beta_1\neq \beta_2$ and 
\[\alpha^{(n+1)}\to \beta_2^{(n)}\to \gamma^{(n-1)}=:P_2,\]
where $\theta(P_1)=-\theta(P_2)$.

\begin{lemma}[Floppability assertion]\label{lem:floppability assertion}
    Let $K$ be a simplicial complex. For some flowline $F$ through $C$ with a double drop $\alpha^{(p+1)}\to \beta_1^{(p)} \to \gamma^{(p-1)}$ there exists a unique $\beta_2\neq \beta_1^{(p)}$ such that $\beta_2$ is a facet of $\alpha$, and $\gamma$ is a facet of $\beta_2$. 
\end{lemma}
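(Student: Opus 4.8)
The plan is to reduce the statement entirely to the Boolean-lattice structure of the face poset of a single simplex; the gradient vector field and the Morse function play no role here, since the claim concerns only which $p$-simplices sit between $\gamma$ and $\alpha$ in the boundary relation. First I would pass to vertex sets: write $\alpha$ as its set of $p+2$ vertices. Because $\beta_1$ is a facet of $\alpha$, there is a unique vertex $v$ with $\beta_1 = \alpha \setminus \{v\}$, and because $\gamma$ is a facet of $\beta_1$, there is a unique vertex $w \neq v$ with $\gamma = \beta_1 \setminus \{w\} = \alpha \setminus \{v,w\}$. Thus $\gamma$ is precisely the codimension-2 face of $\alpha$ obtained by deleting the two-element set $\{v,w\}$.

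Next I would characterise every candidate $\beta_2$. Any facet of $\alpha$ has the form $\alpha\setminus\{u\}$ for a single vertex $u$, and since $\dim(\alpha\setminus\{u\}) = p = \dim(\gamma)+1$, the condition that $\gamma$ be a facet of $\alpha\setminus\{u\}$ reduces to the single containment $\gamma \subseteq \alpha\setminus\{u\}$, i.e. to $u \notin \gamma$. The vertices of $\alpha$ not lying in $\gamma$ are exactly $v$ and $w$, so the facets of $\alpha$ admitting $\gamma$ as a facet are precisely $\alpha\setminus\{v\} = \beta_1$ and $\alpha\setminus\{w\}$.

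From this the lemma follows immediately: setting $\beta_2 := \alpha\setminus\{w\}$ gives a facet of $\alpha$ with $\gamma$ as a facet, and $\beta_2 \neq \beta_1$ because $v \neq w$, while the characterisation above shows that no other choice is possible, giving uniqueness. The one point requiring the simplicial-complex hypothesis is existence of $\beta_2$ as a genuine simplex of $K$: since $\alpha \in K$ and $K$ is closed under taking faces, the face $\beta_2 = \alpha\setminus\{w\}$ indeed lies in $K$.

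I do not expect a serious obstacle, as the argument is essentially a one-line consequence of the Boolean structure once the data are translated to vertex sets; the only things to take care with are the bookkeeping steps — confirming that the two deleted vertices are forced to be exactly $\{v,w\}$ by $\gamma$ (so that precisely two facets occur, not more), and verifying $\beta_2 \neq \beta_1$ through $v \neq w$. This establishes in full generality the ``diamond'' shape already illustrated for the $2$-simplex path $123 \to 1$ in Figure \ref{fig:FlopInHasse}.
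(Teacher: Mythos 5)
Your proof is correct and follows essentially the same route as the paper's: both identify $\gamma$ as $\alpha$ with two vertices $\{v,w\}$ removed, and observe that the two possible orders of removal give exactly the two intermediate facets $\alpha\setminus\{v\}$ and $\alpha\setminus\{w\}$. Your write-up is somewhat tighter on the bookkeeping --- you explicitly characterise \emph{all} facets of $\alpha$ admitting $\gamma$ as a facet (which nails uniqueness) and invoke downward closure of $K$ to place $\beta_2$ in the complex, points the paper handles more informally --- but the underlying argument is the same.
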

\begin{example}
    There is exactly one path from a $(p+1)$-simplex $\alpha$ to $\beta$, one of its facets. This is the boundary arrow.
    
    This is also true for a $p$-simplex $\beta$ with one of its $(p-1)$ dimensional facets, $\gamma$. We note that the $(p-1)$-simplex must be a face of the $(p+1)$-simplex. We show via Figure \ref{fig:2-simplex} that Lemma \ref{lem:floppability assertion} is true for a 3-simplex, by inducing on a 2-simplex, which builds up on the example of the 2-simplex we gave in the introduction. 

    \begin{figure}[H]
        \centering
        \begin{tikzpicture}
\node (v1) at (0,2) {$ v_0$};
\node (v2) at (-1.2,0.2) {$ v_1$};
\node (v3) at (0.8,0.1) {$ v_2$};
\draw  (v1) edge (v2);
\draw  (v2) edge (v3);
\draw  (v3) edge (v1);
\node at (-0.1,0.9) {$\alpha$};
\node at (0,0) {$\beta_1$};
\node at (0.7,1) {$\beta_2$};
\node (v4) at (1.6,0.8) {$ v_3$};
\draw  [red](v3) edge (v4);
\draw  [red][dashed](v2) edge (v4);
\draw  [red](v1) edge (v4);
\end{tikzpicture}
        \caption{The figure shows a 2 simplex with a 0-simplex adjoined. To see why we expect this to hold for the 3-simplex, we observe that we may add a 0-simplex, $ v_3$, and examine the simplicial complex of the tetrahedron $ v_0 v_1 v_2 v_3$ (we can imagine joining all 0-simplices to this point via 1-simplices and all 1-simplices to this point via 2-simplices). Here, we now consider the 1-simplex formed by connecting $ v_2$ to $ v_3$. 
        The 2-simplices formed by connecting $\beta_1$ and $\beta_2$ to $ v_3$ are therefore the only two facets of the tetrahedron ($\alpha$ adjoined to $ v_3$) of which the edge $ v_2  v_3$ is a facet. }
        \label{fig:2-simplex}
    \end{figure}
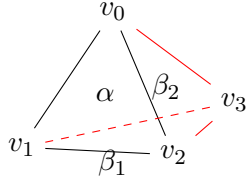
\end{example}
\begin{proof}    
    Consider an $(n+1)$-simplex $\alpha$, with some double drop $\alpha^{(n+1)}\to \beta_1^{(n)}\to \gamma^{(n-1)}$.
    
    We wish to show that there is a unique distinct $\beta_2$ such that both
    \[\alpha^{(n+1)}\to \beta_1^{(n)}\to \gamma^{(n-1)} \qand \alpha^{(n+1)}\to \beta_2^{(n)}\to \gamma^{(n-1)}\]
    are double drops.

    We acknowledge that, though all vertices of $\gamma$ are in $\alpha$, there are two vertices in $\alpha$ which are not in $\gamma$. In order to obtain $\gamma$ from $\alpha$ we may remove the vertices in two distinct ways, dependent only on the order in which we do so. This is to say that in having some path $\alpha\to \beta_1 \to \gamma$ there must be a unique alternative $\alpha\to \beta_2\to \gamma$.

    Here we have shown that the proposition holds when our simplicial complex $K$ is an $(n+1)$-simplex, thus proving that there are always exactly two paths down from some $n+1$ dimensional simplex to an $n-1$ dimensional simplex. 

    Furthermore, since any such flop in a simplicial complex must occur in some simplex, any double drop must have 2 paths down by the above logic.
    The logic therefore holds across simplicial complexes.
\end{proof}
The proof of the above fact is not enough to prove that the square of the Morse differential is trivial, but it does give us a useful tool for manipulating sequences of simplices. 
For $\alpha\to\beta\to\gamma$ some double drop in dimension, let us call the act of finding this unique $\beta'\neq\beta$ something memorable.
\begin{definition}
    For $\gamma$ a face of $\alpha$ where $\dim(\alpha)=\dim(\gamma)+2$, the \textbf{Flop} of the path $\alpha\to\beta\to\gamma$ is the unique and distinct path $\alpha\to\beta'\to\gamma$.
\end{definition}
We now show that the unique, distinct Flop of this double drop has the opposite sign to the double drop itself.

\begin{lemma}[The flip of the Flop]\label{lem:flip_of_flop}
    For some path $P$, we must have
    \[\theta(\Flop(P))=-\theta(P).\]
\end{lemma}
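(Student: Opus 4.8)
The plan is to reduce the claim to a direct comparison of the two arrow signs making up each double drop, after which the single overall sign flip appears as the index shift that classically underlies $\partial^2=0$ in simplicial homology. Since $\Flop$ is only defined on double drops, I take $P$ to be a double drop $\alpha^{(n+1)}\to\beta^{(n)}\to\gamma^{(n-1)}$, and by Lemma \ref{lem:floppability assertion} its Flop is the unique distinct double drop $\alpha\to\beta'\to\gamma$.

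First I would note that both $P$ and $\Flop(P)$ have $\ell=2$ arrows and index $\Ind=2$, so the correction factor $(-1)^{(\ell-\Ind)/2}=(-1)^0=1$ drops out of the definition of $\theta$ in each case. Hence $\theta(P)$ equals exactly the product of its two arrow signs, and likewise $\theta(\Flop(P))$, so it suffices to compare these two products of arrow signs.

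Next I would fix the global ordering $\alpha=[v_0,\dots,v_{n+1}]$ induced by the chosen order on $V$. Since $\dim(\alpha)=\dim(\gamma)+2$, the face $\gamma$ is obtained from $\alpha$ by deleting precisely two vertices $v_i,v_j$ with $i<j$, and the two double drops correspond exactly to the two orders of deletion: $\beta=\alpha\setminus\{v_j\}$ followed by removing $v_i$, versus $\beta'=\alpha\setminus\{v_i\}$ followed by removing $v_j$.

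The step I expect to carry the whole argument is the positional bookkeeping of these deletions. Removing $v_j$ first contributes $(-1)^j$, and in $\beta$ the vertex $v_i$ still sits at position $i$ (nothing below index $j$ moved), so the second arrow contributes $(-1)^i$ and $\theta(P)=(-1)^{i+j}$. In the other order, removing $v_i$ first contributes $(-1)^i$, but this shifts $v_j$ down to position $j-1$, so the second arrow now contributes $(-1)^{j-1}$ and $\theta(\Flop(P))=(-1)^{i+j-1}=-(-1)^{i+j}$. Comparing the two products yields $\theta(\Flop(P))=-\theta(P)$, as required. The only genuine subtlety is this off-by-one shift arising because deleting the lower-indexed vertex first relabels the higher one; every other ingredient is a formal application of the arrow-sign definition.
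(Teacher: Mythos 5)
Your proof is correct and takes essentially the same route as the paper's: both compare the two orders of deleting the vertices $v_i,v_j$ from $\alpha$ and observe that the products of arrow signs differ by exactly one factor of $(-1)$, owing to the positional shift of the higher-indexed vertex when the lower-indexed one is removed first. If anything, your bookkeeping is the more careful one --- your second-arrow signs $(-1)^{j-1}$ versus $(-1)^{j}$ are stated correctly, whereas the paper's displayed case formula expresses that sign in terms of $i$ rather than $j$ (the labels on the two lower arrows of its figure appear to be interchanged), though this does not affect the paper's conclusion that the two paths carry opposite signs.
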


\begin{proof}
    Let there be some $(n+1)$-simplex, $\alpha$, which is to say that $\alpha$ has $n+2$ vertices. There are two vertices we must remove in order to reach an $(n-1)$-subsimplex of $\alpha$. Choose two such vertices $v_i$ and $v_j$, and let $\gamma\coloneq\alpha \backslash \{v_i,v_j\}$ be our $(n-1)$-simplex. Define an orientation on $\alpha$ via some ordering `$>$' on the vertices. Then there are two ways to obtain $\gamma$ from $\alpha$: first by removing $v_i$ then $v_j$, and second by removing $v_j$ then $v_i$. Suppose without loss of generality that $v_i$ is the $i$th vertex in the ordering `$>$', and $v_j$ is the $j$th. The act of removing $v_i$ gives the arrow between $\alpha$ and $\alpha\backslash\{v_i\}$ a sign of $(-1)^i$. Furthermore, in removing $v_j$ from $\alpha\backslash\{v_j\}$ the sign depends on whether $i<j$.
    \[\theta(\left\{\alpha\backslash\{v_i\}>\alpha\backslash\{v_i,v_j\}\right\})=\begin{cases}(-1)^i \quad &\text{if}\, i<j\\
    (-1)^{i+1} &\text{otherwise}
    \end{cases}.\]
    In removing $v_j$ first, we have the symmetric action, as shown in the diagram.
    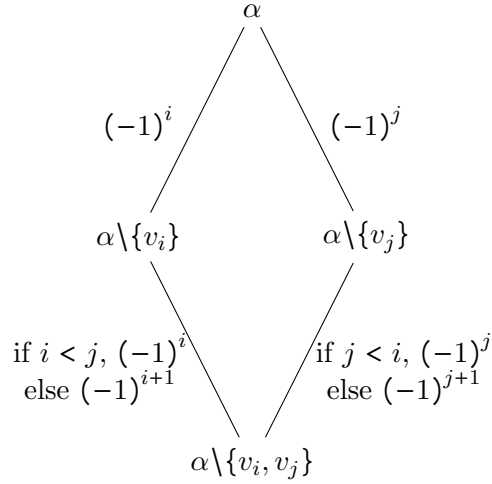
\begin{figure}[H]
        \centering
        \begin{tikzpicture}
        \node (v1) at (0.5,3) {$\alpha$};
        \node (v2) at (-1,0) {$\alpha\backslash \{v_i\}$};
        \node (v3) at (2,0) {$\alpha\backslash \{v_j\}$};
        \draw  (v1) edge (v2);
        \draw  (v3) edge (v1);
        \node (v4) at (0.5,-3) {$\alpha\backslash \{v_i,v_j\}$};
        \draw  (v2) edge (v4);
        \draw  (v4) edge (v3);
        \node at (2,1.5) {$(-1)^j$};
        \node at (-1,1.5) {$(-1)^i$};
        \node at (-1.5,-1.5) {if $i<j$, $(-1)^i$ };
        \node at (2.5,-1.5) {if $j<i$, $(-1)^j$ };
        \node at (-1.5,-2) {else $(-1)^{i+1}$ };
        \node at (2.5,-2) {else $(-1)^{j+1}$};
        \end{tikzpicture}
        \caption{The figure shows the paths from $\alpha$ to $\gamma$ via the removal of $v_i$ first and then that of $v_j$ first.}
        \label{fig:floppabilityOrient}
    \end{figure}
    This is to say that for $i<j$, if $P$ is the path removing $v_i$ then $v_j$, we have the sign $\theta(P)=1$, and $\theta(\Flop(P))=-1$, whereas for $i>j$ we have $\theta(P)=-1$ and $\theta(\Flop(P))=1$. I.e. the Flop operation negates the sign of the path, as required.    
\end{proof}
For any $(n+1)$-simplex $\alpha$ in a simplicial complex on which the canonical Morse function is defined, all critical $(n-1)$-simplices $\gamma$ have exactly two intermediate critical $(n)$-simplices, $\beta$ and $\beta'$, whose paths have opposite sign. We must therefore have 
\begin{align*}
    \delt^2(\alpha)&=
        \sum_{\gamma \in \Gamma}\sum_{\beta \in B}\sum_{F\in \F_{\beta}(\alpha,\gamma)}\theta(F)\cdot \gamma\\
        &=\sum_{\gamma \in \Gamma}\theta(\alpha\to\beta\to\gamma)\cdot \gamma+\theta(\alpha\to\beta'\to\gamma)\cdot \gamma\\
        &=\sum_{\gamma \in \Gamma}\theta(\alpha\to\beta\to\gamma)\cdot \gamma-\theta(\alpha\to\beta\to\gamma)\cdot \gamma\\
        &=\sum_{\gamma \in \Gamma}0=0.
\end{align*}

We conclude that any simplicial complex with the canonical Morse function value has $\delt^2(\alpha)=0$ using the definition of $\delt$ given. 
We wish now to show that $\delt^2(\alpha)=0$ regardless of the Morse function defined on the simplicial complex. For this, we need an analogue of the Flop for flowlines of length greater than 2. This, and more, will be defined in the next subsection.

\subsection{Algorithm}
Critical flow trajectories are like ghosts, scouring the Earth for their purpose before they can float away to the promised land. If a ghost finds his soulmate then they float off together and it is like they've never existed. In the same way, when a critical flow trajectory finds its partner, they cancel each other out and amount to zero. The search for such a partner is outlined in this subsection.

We use the Flop action to take an intermediate simplex to its unique partner simplex.
However, applying Flop twice just gets us back to the simplex we started with, so we will need a few more operations in our toolkit. So what are all the floperations we can use to manipulate a path of index 2?

\begin{itemize}
    \item \textbf{Flop}: This is the operation which switches the intermediate simplex for its unique distinct alternative. This is well defined, as there exists a unique double drop in any path of index 2, and the intermediate simplex $\beta$ has a unique, distinct alternative $\beta'$. 
    \begin{figure}[H]
        \centering
        \begin{tikzpicture}
\node (v2) at (-7,1) {$\alpha$};
\node (v1) at (-8,0) {\dots};
\node (v3) at (-6,0) {$\beta$};
\node (v4) at (-5,-1) {$\gamma$};
\node (v5) at (-4,0) {\dots};
\draw  [-stealth](v1) edge (v2);
\draw  [-stealth](v2) edge (v3);
\draw  [-stealth](v3) edge (v4);
\draw  [-stealth](v4) edge (v5);
\node (v6) at (-3,0) {};
\node (v7) at (-2,0) {};
\draw  [blue][dashed][-stealth](v6) edge (v7);
\node (v8) at (0,1) {$\alpha$};
\node (v9) at (-1,0) {\dots};
\node (v10) at (1,0) {$\beta'$};
\node (v11) at (2,-1) {$\gamma$};
\node (v12) at (3,0) {\dots};
\draw  [-stealth](v9) edge (v8);
\draw  [-stealth](v8) edge (v10);
\draw  [-stealth](v10) edge (v11);
\draw  [-stealth](v11) edge (v12);
\end{tikzpicture}
        \caption{The figure shows the Flop floperation, which takes the intermediate simplex (via the dashed blue arrow) to the unique distinct intermediate simplex connected to $\alpha$ and $\gamma$. Both of the flowlines here are legal. Not that this need not be the case.}
        \label{fig:howToFlop}
    \end{figure}
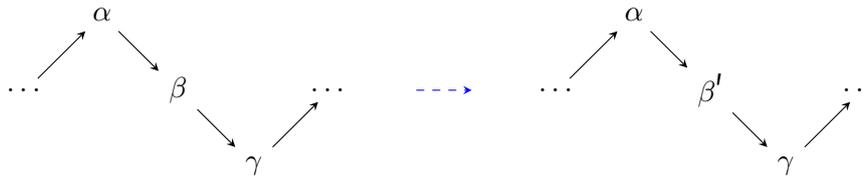
    \item \textbf{Insert}: We also have the ability with all noncritical flow trajectories (i.e. flowlines with a noncritical intermediate simplex) to Insert the Morse arrow attached - either immediately before or after - to the intermediate simplex. 
     
    This is well defined because we know there is a Morse arrow since the simplex is noncritical, and we know there can only be one by Lemma \ref{lem:headtailorneither}. We must add in two of these arrows to have a flowline that follows the narrative of increasing or decreasing dimension in each step to the right, and then we can perform the Flop function to rectify the fact that the double drop has an awkward backwards arrow. 
    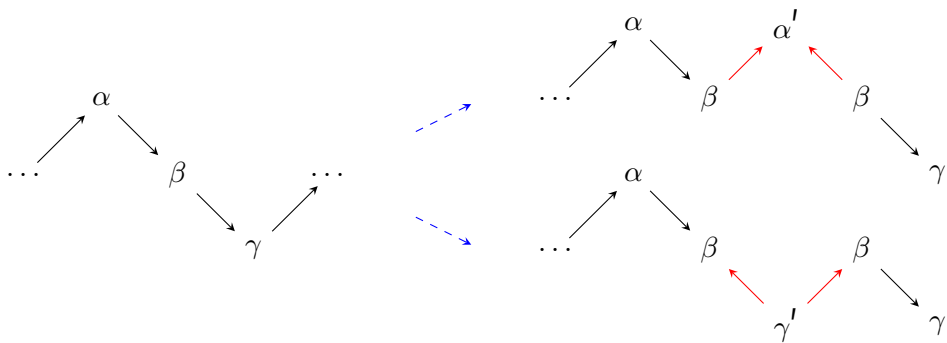
\begin{figure}[H]
        \centering
        \begin{tikzpicture}
\node (v2) at (-7,1) {$\alpha$};
\node (v1) at (-8,0) {\dots};
\node (v3) at (-6,0) {$\beta$};
\node (v4) at (-5,-1) {$\gamma$};
\node (v5) at (-4,0) {\dots};
\draw  [-stealth](v1) edge (v2);
\draw  [-stealth](v2) edge (v3);
\draw  [-stealth](v3) edge (v4);
\draw  [-stealth](v4) edge (v5);
\node (v6) at (-3,0.5) {};
\node (v7) at (-2,1) {};
\draw  [blue][dashed][-stealth](v6) edge (v7);
\node (v8) at (0,2) {$\alpha$};
\node (v9) at (-1,1) {\dots};
\node (v10) at (1,1) {$\beta$};
\node (v11) at (2,2) {$\alpha'$};
\node (v12) at (3,1) {$\beta$};
\draw  [-stealth](v9) edge (v8);
\draw  [-stealth](v8) edge (v10);
\draw  [red][-stealth](v10) edge (v11);
\draw  [red][stealth-](v11) edge (v12);
\node (v13) at (4,0) {$\gamma$};
\draw  [-stealth](v12) edge (v13);
\node (v14) at (-3,-0.5) {};
\node (v15) at (-2,-1) {};
\draw  [blue][dashed][-stealth](v14) edge (v15);
\node (v8) at (0,0) {$\alpha$};
\node (v9) at (-1,-1) {\dots};
\node (v10) at (1,-1) {$\beta$};
\node (v11) at (2,-2) {$\gamma'$};
\node (v12) at (3,-1) {$\beta$};
\draw  [-stealth](v9) edge (v8);
\draw  [-stealth](v8) edge (v10);
\draw [red] [stealth-](v10) edge (v11);
\draw [red] [-stealth](v11) edge (v12);
\node (v13) at (4,-2) {$\gamma$};
\draw  [-stealth](v12) edge (v13);
\end{tikzpicture}
        \caption{The figure shows the floperation of Insert, dependent on whether noncritical $\beta$ is paired in $V$ with a higher dimensional simplex (upper diagram) or with a lower dimensional simplex (lower diagram).}
        \label{fig:howToInsert}
    \end{figure}
    \item \textbf{Cancel}: Occasionally we might find ourselves with a double drop which has a backward arrow after flopping, in which case we can simply Cancel these out. To Cancel out a pair of simplices from the gradient vector field is to remove the secondary instance of the intermediate simplex and the simplex adjacent to both instances. This is well defined since there is at most one backwards arrow at any point (because all backwards arrows must be connected to the intermediate simplex, which in turn can only be adjacent to at most one Morse arrow) and it must be the same as the arrow either directly before or directly after it, so that the cancellation still gives a valid path.
    \begin{figure}[H]
        \centering
        \begin{tikzpicture}
\node (v2) at (8,1) {$\alpha$};
\node (v1) at (7,0) {\dots};
\node (v3) at (9,0) {$\beta$};
\node (v4) at (10,-1) {$\gamma$};
\node (v5) at (11,0) {\dots};
\draw  [-stealth](v1) edge (v2);
\draw  [-stealth](v2) edge (v3);
\draw  [-stealth](v3) edge (v4);
\draw  [-stealth](v4) edge (v5);
\node (v6) at (5,1) {};
\node (v7) at (6,0.5) {};
\draw  [blue][dashed][-stealth](v6) edge (v7);
\node (v8) at (0,2) {$\alpha$};
\node (v9) at (-1,1) {\dots};
\node (v10) at (1,1) {$\beta$};
\node (v11) at (2,2) {$\alpha'$};
\node (v12) at (3,1) {$\beta$};
\draw  [-stealth](v9) edge (v8);
\draw  [-stealth](v8) edge (v10);
\draw  [red][-stealth](v10) edge (v11);
\draw  [red][stealth-](v11) edge (v12);
\node (v13) at (4,0) {$\gamma$};
\draw  [-stealth](v12) edge (v13);
\node (v14) at (5,-1) {};
\node (v15) at (6,-0.5) {};
\draw  [blue][dashed][-stealth](v14) edge (v15);
\node (v8) at (0,0) {$\alpha$};
\node (v9) at (-1,-1) {\dots};
\node (v10) at (1,-1) {$\beta$};
\node (v11) at (2,-2) {$\gamma'$};
\node (v12) at (3,-1) {$\beta$};
\draw  [-stealth](v9) edge (v8);
\draw  [-stealth](v8) edge (v10);
\draw [red] [stealth-](v10) edge (v11);
\draw [red] [-stealth](v11) edge (v12);
\node (v13) at (4,-2) {$\gamma$};
\draw  [-stealth](v12) edge (v13);
\end{tikzpicture}
        \caption{The figure shows the floperation of Cancel, taking the simplex adjacent to the intermediate Morse arrow out completely.}
        \label{fig:howToCancel}
    \end{figure}
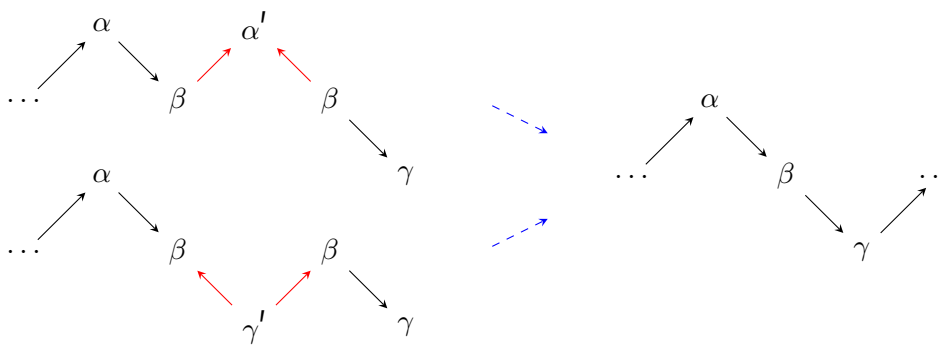
\end{itemize}
These three functions (Flop, Insert and Cancel) are the tools to build an algorithm to help us modify paths in the Hasse diagram,
and we will call them `floperations' of this algorithm. We can formalise the algorithm in the following way.

\begin{definition}
    We define an \textbf{illegal path} to be any undirected path with a backward arrow. \footnote{Note that an illegal path is not a flowline.} 
\end{definition}
We may describe the arrows adjacent to the intermediate simplex in an illegal path as a ``half-up half-down'' double drop since one of the arrows must be upwards facing (as the path is illegal) but one must be downwards facing (as no simplex can have two distinct Morse arrows).
Each step in the algorithm takes a legal flowline to the next legal flowline via some number of floperations, as described in the following lemma.
\begin{definition}\label{def:algorithm}
    To every critical flowline $\FO$ of index 2, we associate the label `c', and deterministically construct a list of flowlines $\F=[(\FO, \text{`c'}), (F_1, \ell_1), \dots (\Fn, \ell_n)]$ where $\FO$ and $\Fn$ are critical flowlines, each $\ell_i\in\{\text{`c', `f'}\}$, and each flowline in the list is given by the following algorithm. 
\begin{algorithmic}

\While{path is noncritical}
    \State Flop
    \While{path is illegal}
        \State Cancel
        \State Append to list with label `c'
        \State Flop
    \EndWhile
    \State Append to list with label `f'
    \If{flowline is critical}
        \State \textbf{break}
    \Else
        \State Insert
    \EndIf
\EndWhile
\end{algorithmic}
If you consider yourself more of a visual learner, Figure \ref{fig:algorithmFlowchart} is a flowline algorithm flowchart to help you along your way.
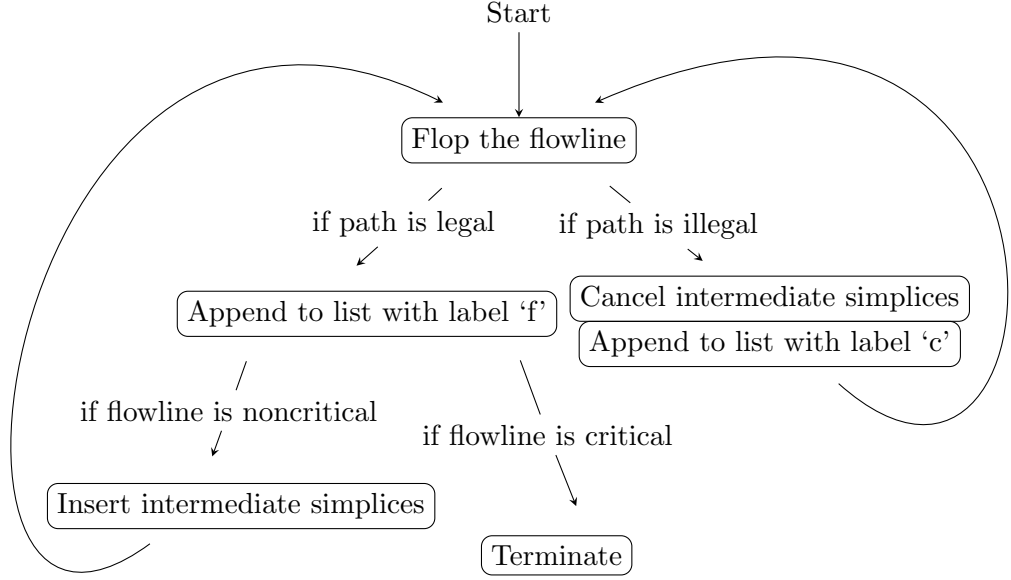
\begin{figure}[ht]
    \centering
\begin{tikzpicture}
\node[draw=black, rounded corners] (v14) at (-2.5,1.3) {Flop the flowline};
\node[draw=black, rounded corners] at (0.8,-0.8) {Cancel intermediate simplices};
\node[draw=black, rounded corners] at (0.8,-1.4) {Append to list with label `c'};
\node[draw=black, rounded corners] at (-4.5,-1) {Append to list with label `f'};
\node [draw=black, rounded corners]at (-6.1561,-3.549) {Insert intermediate simplices};
\node [draw=black, rounded corners]at (-2.0044,-4.2025) {Terminate};
\node (v9) at (-6.3175,-2.2818) {if flowline is noncritical};
\node (v1) at (-2.1212,-2.6038) {if flowline is critical};
\node (v3) at (-4.0157,0.1948) {if path is legal};
\node (v6) at (-0.6639,0.165) {if path is illegal};
\draw [-stealth](1.706,-1.9257) .. controls (4.9,-4.8) and (5.3,4.8) .. (-1.5,1.8)  ;
\draw [-stealth](-7.3561,-4.049) .. controls (-10.8,-6.6) and (-9.3,4.7) .. (-3.5,1.8)  ;
\node (v8) at (-6.0391,-1.4936) {};
\node (v10) at (-6.5849,-3.0262) {};
\node (v11) at (-2.5326,-1.4819) {};
\node (v12) at (-1.6939,-3.685) {};
\node (v2) at (-3.3801,0.7938) {};
\node (v4) at (-4.7735,-0.4834) {};
\node (v5) at (-1.4411,0.8054) {};
\node (v7) at (0.053,-0.4072) {};
\draw  (v2) edge (v3);
\draw  [-stealth](v3) edge (v4);
\draw  (v5) edge (v6);
\draw  [-stealth](v6) edge (v7);
\draw  (v8) edge (v9);
\draw  [-stealth](v9) edge (v10);
\draw  (v11) edge (v1);
\draw  [-stealth](v1) edge (v12);
\node (v13) at (-2.5,3) {Start};
\draw  [-stealth](v13) edge (v14);
\end{tikzpicture}
    \caption{The figure depicts the order of events in the algorithm, where we commence with the Flop operation and append each legal flowline with either `c' or `f' to account for just having Cancelled or Flopped.}
    \label{fig:algorithmFlowchart}
\end{figure}
 
\end{definition}
Notice that we can apply this algorithm to noncritical flowlines of index 2 as well, although it may not terminate. An example of this is shown in \S\ref{subsec:sphere_ex}. Furthermore, we can apply to a noncritical flowline a version of the algorithm which differs from the one given only by a shift of starting point. Since the flowline in question is noncritical, we can apply Insert as our first step and proceed with the algorithm as normal. 
\begin{notation}
    For some flowline $F$ we write $\Alg_c(F)$ for the critical flowline found by applying the algorithm to $F$ first by Flopping, and $\Alg_f(F)$ for the critical flowline found by applying the algorithm first by Inserting. We denote by $\AlgList_c(F)$ the list of flowlines generated by the algorithm where we Flop first (i.e. we have just Cancelled), and denote by $\AlgList_f$ the list generated by Inserting first (i.e. we have just Flopped).
\end{notation}

In the following discussions, it will be useful to think of the label c as dual to the label f, and in this way we shall denote the conjugate label $\overline{c}=f$, and conversely $\overline{f}=c$.

Here, the `Append' operation is only for bookkeeping, and does not alter the algorithm's final critical output. Append occurs at every legal flowline (which is true because it occurs after every instance of Cancel and before every instance of Insert) to register it in the list of flowlines. 
We claim that the algorithm will be involutive, in that it will take a critical flowline $F_{i}$ to another critical flowline $F_{j}$ and terminate, and will take $F_{j}$ to $F_{i}$ and terminate. 
This is to say that the terminating flowlines of the algorithm correspond exactly to critical flow trajectories.
The flowlines between $F_{i}$ and $F_{j}$ given in $\AlgList_f(F_{i})$ represent a deformation of $F_{i}$ to $F_{j}$. Indeed, one way to interpret our results is that the space of flowlines in such a list is a one-dimensional simplicial manifold with boundary. If $F_i,F_j$ start with simplex $\alpha^{(n+1)}$ and finish with simplex $\gamma^{(n-1)}$, we call this space of flowlines $\M(\alpha,\gamma)$, the moduli space of flowlines.
\begin{definition}(Flowline equivalence)\label{def:flowlineEquivalnece}
    For any two flowlines $F_{i}, F_{j}$ of index 2 we define the relation $\sim$ such that $F_{i}\sim F_{j}$ if there exists some $F'$ such that $\AlgList_{f/c}(F)=[\cdots, F_i, \cdots, F_j , \cdots]$.
\end{definition}
We will show in Lemma \ref{lem:equiv.rel} that this is indeed an equivalence relation. We see that, to act on some noncritical $F\in \AlgList_c(\FO)$ we have two options: $\AlgList_c(F)$ or $\AlgList_f(F)$, which each terminate at different critical flowlines. In this way, the equivalence class is a one-dimensional space of flowlines with two endpoints. We call the endpoints boundaries.
\begin{definition}
    A \textbf{boundary flowline} is a flowline at which the flowline algorithm terminates. The set of all boundary flowlines from $\alpha^{(n+1)}$ to $\gamma^{(n-1)}$ is denoted $\del\M(\alpha,\gamma)$.
\end{definition}

\begin{definition}\label{def:ModuliSpace}
    The moduli space $\M(\alpha,\gamma)$ of index 2 flowlines between two critical simplices $\alpha,\gamma$ with $\dim(\alpha)=\dim(\gamma)+2$ is the simplicial complex whose vertices are flowlines $F: \alpha\to \gamma$, and whose edges $\{F_i,F_j\}$ are pairs of subsequent flowlines in $\AlgList_f(F)$ for some choice of $F$.
\end{definition}

We will prove in Lemma \ref{lem:2critFsPerEquivClass} $\M(\alpha,\gamma)$ is a 1-dimensional simplicial manifold, and subsequently that the boundary of this manifold is exactly the boundary flowlines. 
 
We now show the following Lemma.
\begin{lemma}\label{lem:beta.is.unimportant(no offence beta)}
    For $\alpha$ a critical $(p+1)$-simplex, $B$ the set of critical $p$-simplices, $\Gamma$ the set of critical $(p-1)$-simplices, $\F_{\beta}(\alpha,\gamma)$ the set of flowlines through $\beta$ and $\F(\alpha,\gamma)$ the set of critical flowlines,
    \begin{align*}
        \sum_{F\in\F(\alpha,\gamma)}\theta(F)=\#\del \M(\alpha,\gamma).
    \end{align*}
\end{lemma}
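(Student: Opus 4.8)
The plan is to reduce the claimed numerical identity to a set-theoretic one. Both sides are signed counts with respect to the same intrinsic sign $\theta$: the right-hand side $\#\del\M(\alpha,\gamma)$ is by definition $\sum_{F\in\del\M(\alpha,\gamma)}\theta(F)$, while the left-hand side sums $\theta$ over $\F(\alpha,\gamma)$. Since $\theta(F)$ depends only on the flowline $F$ and not on how the algorithm arrives at it, it suffices to prove the equality of sets $\del\M(\alpha,\gamma)=\F(\alpha,\gamma)$; the two signed counts then agree term by term.

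First I would show that the algorithm of Definition \ref{def:algorithm} halts on a given flowline if and only if that flowline is critical. Inspecting the pseudocode, the only exit from the outer loop is the \textbf{break} guarded by the test ``flowline is critical''; every other branch (Flop, Cancel, Insert, Append) either continues the inner loop or re-enters the outer one. On a noncritical flowline the intermediate simplex carries a Morse arrow (by Lemma \ref{lem:headtailorneither}), so Insert is well defined and the algorithm genuinely advances rather than stalling. Hence the termination states are exactly the critical flowlines, and since a boundary flowline is by definition one at which the algorithm terminates, this gives $\del\M(\alpha,\gamma)\subseteq\F(\alpha,\gamma)$. For the reverse inclusion I would invoke the involutivity of Lemma \ref{lem:algInvolutivity} together with the $1$-manifold structure of Lemma \ref{lem:2critFsPerEquivClass}: each component of $\M(\alpha,\gamma)$ meeting $\F(\alpha,\gamma)$ is a closed arc whose two degree-one endpoints are precisely the critical flowlines at which the algorithm halts, so every critical flowline is realised as an endpoint and hence lies in $\del\M(\alpha,\gamma)$.

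It then remains to justify the phrase ``$\beta$ is unimportant'', namely that summing $\theta$ over $\F(\alpha,\gamma)$ counts each critical flowline once without reference to its double-drop simplex. This follows because any index $2$ flowline has a \emph{unique} double drop (as established in the discussion around Lemma \ref{lem:floppability assertion}), so the intermediate simplex $\beta=\beta(F)$ is a well-defined, single-valued invariant of $F$. Consequently $\F(\alpha,\gamma)=\bigsqcup_{\beta\in B}\F_{\beta}(\alpha,\gamma)$ is a genuine disjoint union, and no critical flowline is over- or under-counted. Combining this with the set equality of the previous step yields
\[
\sum_{F\in\F(\alpha,\gamma)}\theta(F)=\sum_{F\in\del\M(\alpha,\gamma)}\theta(F)=\#\del\M(\alpha,\gamma),
\]
as required.

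The main obstacle I anticipate is the reverse inclusion $\F(\alpha,\gamma)\subseteq\del\M(\alpha,\gamma)$ and, more delicately, ensuring that each component of $\M(\alpha,\gamma)$ meeting $\F(\alpha,\gamma)$ is an arc rather than a cycle — a cyclic component would contribute no boundary flowlines yet could contain critical flowlines, breaking the correspondence. Ruling this out is exactly where involutivity is essential: it forces a critical flowline to sit at the \emph{end} of its algorithm list $\AlgList_f(F)$ rather than in its interior, so the enclosing component is an arc with critical endpoints. Making this precise requires matching the manifold-theoretic notion of boundary (a vertex of $\M(\alpha,\gamma)$ of degree one) with the algorithmic notion of termination, which is the crux of the lemma.
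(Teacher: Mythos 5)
Your proposal is correct and follows essentially the same route as the paper: both reduce the identity to the set equality $\del\M(\alpha,\gamma)=\F(\alpha,\gamma)$, get the inclusion $\del\M(\alpha,\gamma)\subseteq\F(\alpha,\gamma)$ directly from the algorithm's termination condition, and get the converse by placing every critical flowline at the end of its equivalence class (the paper reaches forward to Lemma \ref{lem:equiv.rel} for this, you reach forward to Lemmas \ref{lem:algInvolutivity} and \ref{lem:2critFsPerEquivClass}, but the underlying idea is the same). Your added care about cyclic components and about the disjoint union $\F(\alpha,\gamma)=\bigsqcup_{\beta\in B}\F_{\beta}(\alpha,\gamma)$ is a welcome tightening rather than a different argument.
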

To prove the lemma, we need only show that for some critical $(p+1)$-simplex $\alpha$, and $\Gamma$ the set of critical $(p-1)$-simplices, the following equality holds.
 
\begin{align*}
&\sum_{\gamma \in \Gamma}\sum_{\beta \in B}\sum_{F\in \F_{\beta}(\alpha,\gamma)}\theta(F)\\
&= \sum_{\gamma\in\Gamma}\#(\text{boundary flowlines } \alpha\to\gamma).
\end{align*}

\begin{proof}
    We claim that the boundary flowlines coincide exactly with critical flowlines. 
    All boundary flowlines must be critical by the construction of the algorithm, so we need only show that all critical flowlines are terminal flowlines of the algorithm. For the converse, we need to reach into the future and grab Lemma \ref{lem:equiv.rel}, which tells us that the flowline equivalence we defined in Definition \ref{def:flowlineEquivalnece} is an equivalence relation.
    We can then assume that every flowline $\alpha\to \gamma$ belongs to a partition of flowline equivalence, inferring that each critical flowline belongs to such a partition. Furthermore, by definition of the algorithm, any given critical flowline can only occur at the boundary of a partition. This is to say that all critical flowlines must be boundary flowlines.
    Then, since for each $\gamma$ we have 
    \begin{align*}
        \#(\text{flowlines } \alpha\to\gamma \text{ with 2 drops at some critical simplex})&\\=\#(\text{boundaries of flowlines } \alpha\to\gamma \text{ with 2 drops})&,
    \end{align*}
    the lemma holds.
\end{proof}

We can conclude that \[\delt^2(\alpha)=\sum_{\gamma\in\Gamma}\#(\text{boundaries of flowlines } \alpha\to\gamma \text{ with 2 drops})\cdot \gamma.\]

This conclusion leads us to try to find a way to pair up flowlines in this sum. 
In defining $\M(\alpha,\beta)$ for simplices $\alpha^{(n)},\beta^{(n-1)}$ as the space of flowlines from $\alpha$ to $\beta$, each point in $\M(\alpha,\beta)$ must be such a path, and each step of the algorithm must represent an edge connecting two such paths, as indicated in Definition \ref{def:ModuliSpace}. Then an equivalence class of flowlines is a connected component of $\M(\alpha,\beta)$.
Indeed, we claim that we can partition the sum into pairs of critical flowlines belonging to the same connected component of $\M(\alpha,\beta)$, as we will prove in Lemma \ref{lem:del=boundarysum2}.

\subsection{Sphere example}\label{subsec:sphere_ex}
Let us show the algorithm in action, first with a trivial example: that where there are no critical flow trajectories. Let us take a simplicial complex with a critical $2$-simplex called `123' and a critical 0-simplex called `4' as shown in the Hasse diagram, with the Morse function endowed. This simplicial complex represents the structure of a sphere and the critical simplices consist only of a critical maximum and a critical minimum. Note that there is a down arrow from a $p$-simplex to a $(p-1)$-simplex if its vertices are a subset of the $p$-simplex. For an arbitrary flowline $123\rightsquigarrow 4$, we can still apply the algorithm, but since the flowline must be noncritical it simply finds in which partition of the flow trajectory space the flowline lies. 
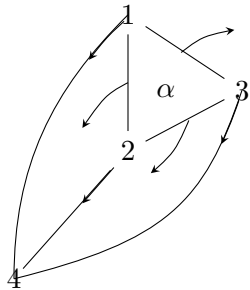
\begin{figure}[H]
    \centering
\begin{tikzpicture}

\node (v1) at (0,0) {1};
\node (v2) at (0,-1.8) {2};
\node (v3) at (1.5,-1) {3};
\node (v4) at (-1.5,-3.5) {4};
\draw  (v1) edge (v2);
\draw  (v2) edge (v3);
\draw  (v3) edge (v1);
\draw (1.5,-1) .. controls (1,-2.5) and (0.5,-3) .. (-1.5,-3.5);
\draw (0,0) .. controls (-1,-1) and (-1.5,-2) .. (-1.5,-3.5) node (v5) {};
\draw  (v5) edge (v2);
\node at (0.5,-1) {$\alpha$};
\draw [-stealth] plot[smooth, tension=.7] coordinates {(0,-0.9) (-0.3,-1.1) (-0.6,-1.5)};
\draw  [-stealth]plot[smooth, tension=.7] coordinates {(0.8,-1.4) (0.6,-1.8) (0.3,-2.1)};
\draw [-stealth] plot[smooth, tension=.7] coordinates {(0.7,-0.5) (1,-0.3) (1.4,-0.2)};
\draw  [-stealth]plot[smooth, tension=.7] coordinates {(-0.1861,-2.0246) (-0.6,-2.5)};
\draw [-stealth] plot[smooth, tension=.7] coordinates {(-0.0517,-0.086) (-0.2792,-0.3065) (-0.5135,-0.596)};
\draw [-stealth] plot[smooth, tension=.7] coordinates {(1.4576,-1.1404) (1.3542,-1.423) (1.2301,-1.7125)};
\end{tikzpicture}

    \caption{The figure shows a 2-simplex with a critical face, $\alpha$, and a critical vertex, $4$.}
    \label{fig:tetraSphere}
\end{figure}

The Hasse diagram is given by:

\begin{figure}[H]
    \centering
    \begin{tikzpicture}
\node (v14) at (-3,1.5) {123};
\node (v3) at (-1.5,1.5) {124};
\node (v5) at (0,1.5) {134};
\node (v7) at (1.5,1.5) {234};
\node (v9) at (0,0) {14};
\node (v6) at (-1.5,0) {23};
\node (v4) at (-3,0) {13};
\node (v2) at (-4.5,0) {12};
\node (v11) at (1.5,0) {24};
\node (v13) at (3,0) {34};
\node (v8) at (-3,-1.5) {1};
\node (v10) at (-1.5,-1.5) {2};
\node (v12) at (0,-1.5) {3};
\node (v1) at (1.5,-1.5) {4};
\draw  (-3,1.5) node (v14) {} ellipse (0.5 and 0.5);
\draw  (v1) ellipse (0.5 and 0.5);
\draw  [red][-stealth](v2) edge (v3);
\draw  [red][-stealth](v4) edge (v5);
\draw  [red][-stealth](v6) edge (v7);
\draw  [red][-stealth](v8) edge (v9);
\draw  [red][-stealth](v10) edge (v11);
\draw  [red][-stealth](v12) edge (v13);
\draw  [-stealth](v14) edge (v2);
\draw  [-stealth](v14) edge (v4);
\draw  [-stealth](v3) edge (v9);
\draw  [-stealth](v5) edge (v9);
\draw  [-stealth](v3) edge (v11);
\draw  [-stealth](v5) edge (v13);
\draw  [-stealth](v7) edge (v11);
\draw  [-stealth](v7) edge (v13);
\draw  [-stealth](v14) edge (v6);
\draw  [-stealth](v2) edge (v8);
\draw  [-stealth](v2) edge (v10);
\draw  [-stealth](v6) edge (v10);
\draw  [-stealth](v4) edge (v12);
\draw  [-stealth](v6) edge (v12);
\draw  [-stealth](v9) edge (v1);
\draw  [-stealth](v11) edge (v1);
\draw  [-stealth](v13) edge (v1);
\draw  [-stealth](v4) edge (v8);
\end{tikzpicture}
    \caption{The figure shows the Hasse diagram of the given 2-simplex, with Morse arrows given in red.}
    \label{fig:HasseTetraSphere}
\end{figure}
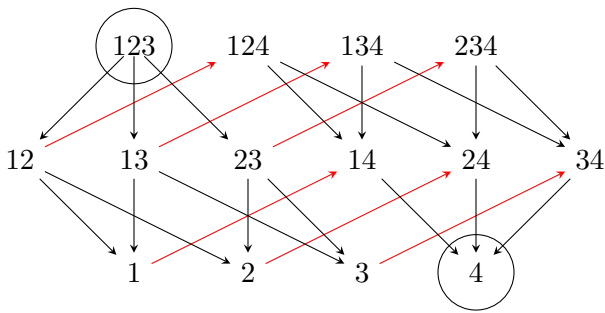

We select a flowline like this one.

\begin{center} \begin{tikzpicture}
\node (v1) at (0,4) {$123$};
\node (v2) at (1,3) {$12$};
\node (v3) at (2,4) {124};
\node (v4) at (3,3) {14};
\node (v5) at (4,2) {4};
\draw  [-stealth](v1) edge (v2);
\draw  [-stealth](v2) edge (v3);
\draw  [-stealth](v3) edge (v4);
\draw  [-stealth](v4) edge (v5);
\node at (-1.5,3) {$F_0:=$};
\end{tikzpicture}\end{center} 

Since this is not a critical flow trajectory, we start by Inserting the Morse arrow adjacent to 14. 

\begin{center}
\begin{tikzpicture}
\node (v123) at (0,4) {$123$};
\node (v12) at (1,3) {$12$};
\node (v124) at (2,4) {124};
\node (v14) at (3,3) {14};
\node (v1) at (4, 2) {1};
\draw  [-stealth](v123) edge (v12);
\draw  [-stealth](v12) edge (v124);
\draw  [-stealth](v124) edge (v14);
\draw  [red][stealth-](v14) edge (v1);
\node (v14) at (5,3) {14};
\node (v4) at (6,2) {4};
\draw [red][-stealth](v1) edge (v14);
\draw [-stealth](v14) edge (v4);
\end{tikzpicture}
\end{center}

We Flop to obtain the following.
\begin{center}
\begin{tikzpicture}
\node (v123) at (0,4) {$123$};
\node (v12) at (1,3) {$12$};
\node (v124) at (2,4) {124};
\node (v14) at (3,3) {12};
\node (v1) at (4,2) {1};
\draw  [-stealth](v123) edge (v12);
\draw  [-stealth](v12) edge (v124);
\draw  [red][stealth-](v124) edge (v14);
\draw  [red][-stealth](v14) edge (v1);
\node (v14) at (5,3) {14};
\node (v4) at (6,2) {4};
\draw [-stealth](v1) edge (v14);
\draw [-stealth](v14) edge (v4);
 
\end{tikzpicture}
\end{center}

We see that we can Cancel, so we do, to obtain the following legal path.
\begin{center}
\begin{tikzpicture}
\node (v123) at (0,4) {123};
\node (v12) at (1,3) {12};
\node (v1) at (2,2) {1};
\draw [red][-stealth](v123) edge (v12);
\draw [red][-stealth](v12) edge (v1);
\node (v14) at (3,3) {14};
\node (v4) at (4,2) {4};
\draw [-stealth](v1) edge (v14);
\draw [-stealth](v14) edge (v4);
\node at (-1.5,3) {$F_1:=$};
\end{tikzpicture}
\end{center}

Since we have Cancelled, we then apply Flop. This path too is legal.
\begin{center}
\begin{tikzpicture}
\node (v123) at (0,4) {123};
\node (v13) at (1,3) {13};
\node (v1) at (2,2) {1};
\draw  [red][-stealth](v123) edge (v13);
\draw  [red][-stealth](v13) edge (v1);
\node (v14) at (3,3) {14};
\node (v4) at (4,2) {4};
\draw [-stealth](v1) edge (v14);
\draw [-stealth](v14) edge (v4);
\node at (-1.5,3) {$F_2:=$};
\end{tikzpicture}
\end{center}

Continuing in this way, we find ourselves back at the original flowline after 11 Flopsworth of applying the algorithm. This is to say that the algorithm has no boundary flow trajectories, such that the algorithm never terminates. We can imagine $\M(123, 4)$ is in the form of a circle with some number of notches corresponding to flowlines. After a finite number of floperations, we cycle back to the same flowline again. This agrees with what we might expect given that there are no intermediate critical simplices.

\subsection{Projective plane example}
For another example, take the real projective plane $\RPtwo$. Here I will use an example from Forman's Article (Section 4, \cite{forman}) on which to apply the algorithm. The real projective plane is equivalent to a simplicial complex with a critical 0-, 1- and 2-simplex. 

There exists a critical flow through the simplicial complex in the following way.
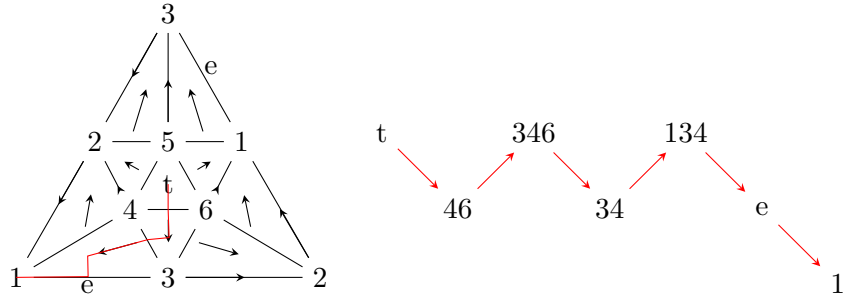
\begin{figure}[H]
    \centering
\begin{tikzpicture}
\node (v2) at (2,0.5) {3};
\node (v1) at (0,-3) {1};
\node (v3) at (4,-3) {2};
\node (v4) at (1.035,-1.2) {2};
\node (v6) at (2.965,-1.2) {1};
\node (v5) at (2,-3) {3};
\node (v7) at (2,-1.2) {5};
\node (v8) at (1.5,-2.1) {4};
\node (v9) at (2.5,-2.1) {6};
\draw  (v7) edge (v2);
\draw  (v1) edge (v8);
\draw  (v9) edge (v3);
\draw  (v8) edge (v9);
\draw  (v9) edge (v7);
\draw  (v7) edge (v8);
\draw  (v4) edge (v7);
\draw  (v7) edge (v6);
\draw  (v6) edge (v9);
\draw  (v5) edge (v9);
\draw  (v8) edge (v5);
\draw  (v4) edge (v8);
\draw  (v2) edge (v4);
\draw  (v4) edge (v1);
\draw  (v1) edge (v5);
\draw  (v5) edge (v3);
\draw  (v3) edge (v6);
\draw  (v6) edge (v2);
\node (v21) at (1.4517,-0.4807) {};
\node (v20) at (0.4822,-2.1672) {};
\node (v126) at (3.3917,-1.9586) {};
\node (v18) at (0.8867,-2.464) {};
\node (v19) at (1.008,-1.7639) {};
\node (v16) at (3.1133,-2.464) {};
\node (v17) at (2.973,-1.774) {};
\node (v30) at (1.75,-1.65) {};
\node (v28) at (2.25,-1.65) {};
\node (v31) at (1.2893,-1.3894) {};
\node (v29) at (2.6967,-1.3844) {};
\node (v32) at (1.265,-1.65) {};
\node (v26) at (2.735,-1.65) {};
\node (v10) at (1.73,-2.5) {};
\node (v14) at (2.27,-2.5) {};
\node (v11) at (0.9415,-2.7169) {};
\node (v15) at (3.0647,-2.732) {};
\node (v13) at (2.0056,-2.6706) {};
\node (v12) at (2,-2.1) {};
\node (v22) at (1.5,-1.2) {};
\node (v24) at (2.5,-1.2) {};
\node (v23) at (1.7525,-0.37) {};
\node (v25) at (2.2436,-0.365) {};
\node (v27) at (3.1416,-3.0004) {};
\draw  [-stealth](v10) edge (v11);
\draw  [-stealth](v12) edge (v13);
\draw  [-stealth](v14) edge (v15);
\draw  [-stealth](v16) edge (v17);
\draw  [-stealth](v18) edge (v19);
\draw  [-stealth](v4) edge (v20);
\draw  [-stealth](v2) edge (v21);
\draw  [-stealth](v22) edge (v23);
\draw  [-stealth](v24) edge (v25);
\draw  [-stealth](v9) edge (v26);
\draw  [-stealth](v8) edge (v32);
\draw  [-stealth](v30) edge (v31);
\draw  [-stealth](v28) edge (v29);
\draw  [-stealth](v5) edge (v27);
\draw  [-stealth](v3) edge (v126);
\node (v111) at (2.0001,-0.2445) {};
\draw  [-stealth] (v7) edge (v111);
\node at (2.5657,-0.2259) {e};
\node at (0.9373,-3.1258) {e};
\node (v33) at (2.00,-1.7638) {t};
\draw  [red]plot[smooth, tension=0] coordinates {(v33) (v12) (2.0128,-2.4725) (v10) (v11) (0.9474,-2.9902) (v1)};
\end{tikzpicture}
~
\begin{tikzpicture}
\node (t) at (0,4) {t};
\node (v46) at (1,3) {46};
\node (v346) at (2,4) {346};
\draw [red][-stealth](t) edge (v46);
\draw [red][-stealth](v46) edge (v346);
\node (v34) at (3,3) {34};
\node (v134) at (4,4) {134};
\draw [red][-stealth](v346) edge (v34);
\draw [red][-stealth](v34) edge (v134);
\node (e) at (5,3) {e};
\draw [red] [-stealth](v134) edge (e);
\node (v1) at (6,2) {1};
\draw [red][-stealth] (e) edge (v1);
\end{tikzpicture}

    \caption{The figure shows the path through the simplicial complex.}
    \label{fig:RP2Flowline1}
\end{figure}

We can see how this might represent the real projective plane. We know that the real projective plane can be described as $\RPtwo=S^2/\sim$ where $(x,y,z)\sim(-x,-y,-z)$, or even as $\RPtwo=(S^2,z\geq 0)/\sim$ where $(x,y,0)\sim(-x,-y,0)$, so we may take the upper hemisphere and identify the opposite ends of the equator. We see a similar thing happening in the simplicial complex, where the north pole, as the maximum, is given by $t$, and all the vertices on the equator, i.e. 1, 2 and 3, are identified with their opposite.  In the flowline, we are travelling down from the north pole to the critical point at the equator. 
Applying the algorithm to this flowline, we find that it terminates at the following blue flowline. 

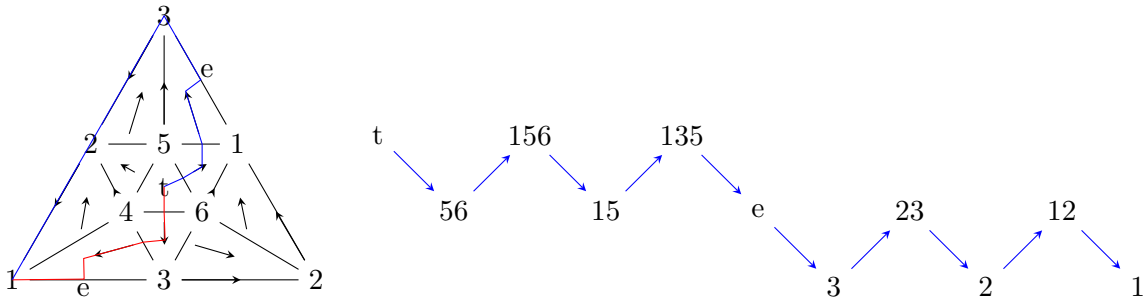
\begin{figure}[H]
    \centering
\begin{tikzpicture}
\node (v2) at (2,0.5) {3};
\node (v1) at (0,-3) {1};
\node (v3) at (4,-3) {2};
\node (v4) at (1.035,-1.2) {2};
\node (v6) at (2.965,-1.2) {1};
\node (v5) at (2,-3) {3};
\node (v7) at (2,-1.2) {5};
\node (v8) at (1.5,-2.1) {4};
\node (v9) at (2.5,-2.1) {6};
\draw  (v7) edge (v2);
\draw  (v1) edge (v8);
\draw  (v9) edge (v3);
\draw  (v8) edge (v9);
\draw  (v9) edge (v7);
\draw  (v7) edge (v8);
\draw  (v4) edge (v7);
\draw  (v7) edge (v6);
\draw  (v6) edge (v9);
\draw  (v5) edge (v9);
\draw  (v8) edge (v5);
\draw  (v4) edge (v8);
\draw  (v2) edge (v4);
\draw  (v4) edge (v1);
\draw  (v1) edge (v5);
\draw  (v5) edge (v3);
\draw  (v3) edge (v6);
\draw  (v6) edge (v2);
\node (v21) at (1.4517,-0.4807) {};
\node (v20) at (0.4822,-2.1672) {};
\node (v126) at (3.3917,-1.9586) {};
\node (v18) at (0.8867,-2.464) {};
\node (v19) at (1.008,-1.7639) {};
\node (v16) at (3.1133,-2.464) {};
\node (v17) at (2.973,-1.774) {};
\node (v30) at (1.75,-1.65) {};
\node (v28) at (2.25,-1.65) {};
\node (v31) at (1.2893,-1.3894) {};
\node (v29) at (2.6967,-1.3844) {};
\node (v32) at (1.265,-1.65) {};
\node (v26) at (2.735,-1.65) {};
\node (v10) at (1.73,-2.5) {};
\node (v14) at (2.27,-2.5) {};
\node (v11) at (0.9415,-2.7169) {};
\node (v15) at (3.0647,-2.732) {};
\node (v13) at (2.0056,-2.6706) {};
\node (v12) at (2,-2.1) {};
\node (v22) at (1.5,-1.2) {};
\node (v24) at (2.5,-1.2) {};
\node (v23) at (1.7525,-0.37) {};
\node (v25) at (2.2436,-0.365) {};
\node (v27) at (3.1416,-3.0004) {};
\draw  [-stealth](v10) edge (v11);
\draw  [-stealth](v12) edge (v13);
\draw  [-stealth](v14) edge (v15);
\draw  [-stealth](v16) edge (v17);
\draw  [-stealth](v18) edge (v19);
\draw  [-stealth](v4) edge (v20);
\draw  [-stealth](v2) edge (v21);
\draw  [-stealth](v22) edge (v23);
\draw  [-stealth](v24) edge (v25);
\draw  [-stealth](v9) edge (v26);
\draw  [-stealth](v8) edge (v32);
\draw  [-stealth](v30) edge (v31);
\draw  [-stealth](v28) edge (v29);
\draw  [-stealth](v5) edge (v27);
\draw  [-stealth](v3) edge (v126);
\node (v111) at (2.0001,-0.2445) {};
\draw  [-stealth] (v7) edge (v111);
\node at (2.5657,-0.2259) {e};
\node at (0.9373,-3.1258) {e};
\node (v33) at (2.00,-1.7638) {t};
\draw  [red]plot[smooth, tension=0] coordinates {(v33) (v12) (2.0128,-2.4725) (v10) (v11) (0.9474,-2.9902) (v1)};
\draw  [blue]plot[smooth, tension=0] coordinates {(v33) (v28) (2.5,-1.5) (v24) (2.2826,-0.4986) (2.4829,-0.3479) (v2) (v1)};
\end{tikzpicture}
~
\begin{tikzpicture}
\node (t) at (0,4) {t};
\node (v46) at (1,3) {56};
\node (v346) at (2,4) {156};
\draw [blue][-stealth](t) edge (v46);
\draw [blue][-stealth](v46) edge (v346);
\node (v34) at (3,3) {15};
\node (v134) at (4,4) {135};
\draw [blue][-stealth](v346) edge (v34);
\draw [blue][-stealth](v34) edge (v134);
\node (e) at (5,3) {e};
\node (v3) at (6,2) {3};
\draw  [blue][-stealth](v134) edge (e);
\draw  [blue][-stealth](e) edge (v3);
\node (v23) at (7,3) {23};
\node (v2) at (8,2) {2};
\node (v12) at (9,3) {12};
\node (v1) at (10,2) {1};
\draw  [blue][-stealth](v3) edge (v23);
\draw  [blue][-stealth](v23) edge (v2);
\draw  [blue][-stealth](v2) edge (v12);
\draw  [blue][-stealth](v12) edge (v1);
\end{tikzpicture}
    \caption{The figure shows the other critical flowline through the simplicial complex.}
    \label{fig:RP2Flowline2}
\end{figure}
We can split each path into two components: one giving the half flowline $t\to e$, and the other giving the half flowline $e\to 1$. The possible flowlines $t\to e$ are the following.

\begin{center}
\begin{tikzpicture}
\node (t) at (-1,6.5) {t};
\node (v46) at (0,5.5) {46};
\node (v346) at (1,6.5) {346};
\draw [red][-stealth](t) edge (v46);
\draw [red][-stealth](v46) edge (v346);
\node (v34) at (2,5.5) {34};
\node (v134) at (3,6.5) {134};
\draw [red][-stealth](v346) edge (v34);
\draw [red][-stealth](v34) edge (v134);
\node (e) at (4,5.5) {e};
\draw [red] [-stealth](v134) edge (e);

\node (t) at (6,6.5) {t};
\node (v46) at (7,5.5) {56};
\node (v346) at (8,6.5) {156};
\draw [blue][-stealth](t) edge (v46);
\draw [blue][-stealth](v46) edge (v346);
\node (v34) at (9,5.5) {15};
\node (v134) at (10,6.5) {135};
\draw [blue][-stealth](v346) edge (v34);
\draw [blue][-stealth](v34) edge (v134);
\node (e) at (11,5.5) {e};
\draw [blue] [-stealth](v134) edge (e);
\node at (-2,6) {$F_{r:t\to e}:=$};
\node at (5,6) {$F_{b:t\to e}:=$};
\end{tikzpicture}
\end{center}

The possible flowlines $e\to 1$ are the following.

\begin{center}
\begin{tikzpicture}
\node (e) at (2,3) {e};
\node (v1) at (3,2) {1};
\draw [red][-stealth] (e) edge (v1);

\node (e) at (5,3) {e};
\node (v3) at (6,2) {3};
\draw  [blue][-stealth](e) edge (v3);
\node (v23) at (7,3) {23};
\node (v2) at (8,2) {2};
\node (v12) at (9,3) {12};
\node (v1) at (10,2) {1};
\draw  [blue][-stealth](v3) edge (v23);
\draw  [blue][-stealth](v23) edge (v2);
\draw  [blue][-stealth](v2) edge (v12);
\draw  [blue][-stealth](v12) edge (v1);
\node at (1,2.5) {$F_{r:e\to 1}:=$};
\node at (4,2.5) {$F_{b:e\to 1}:=$};
\end{tikzpicture}
\end{center}

Call the red former-half flowline $F_{r:t\to e}$ and the blue former-half flowline $F_{b:t\to e}$. Likewise, call the red latter-half flowline $F_{r:e\to 1}$ and the blue latter-half flowline $F_{b:e\to 1}$. We can compose either $t\to e$ flowline with either $e\to 1$ flowline to produce another critical path through the simplicial complex. Furthermore, applying the algorithm to $F_g=F_{r:t\to e}\circ F_{r:b\to 1}$ will give $F_y = F_{b:t\to e}\circ F_{r:e\to 1}$, shown in green and yellow respectively.
In this way, we can partition the simplex into two components, each bounded by two critical flow trajectories.

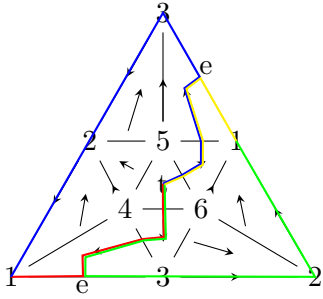
\begin{figure}[H]
    \centering
    \begin{tikzpicture}
\node (v2) at (2,0.5) {3};
\node (v1) at (0,-3) {1};
\node (v3) at (4,-3) {2};
\node (v4) at (1.035,-1.2) {2};
\node (v6) at (2.965,-1.2) {1};
\node (v5) at (2,-3) {3};
\node (v7) at (2,-1.2) {5};
\node (v8) at (1.5,-2.1) {4};
\node (v9) at (2.5,-2.1) {6};
\draw  (v7) edge (v2);
\draw  (v1) edge (v8);
\draw  (v9) edge (v3);
\draw  (v8) edge (v9);
\draw  (v9) edge (v7);
\draw  (v7) edge (v8);
\draw  (v4) edge (v7);
\draw  (v7) edge (v6);
\draw  (v6) edge (v9);
\draw  (v5) edge (v9);
\draw  (v8) edge (v5);
\draw  (v4) edge (v8);
\draw  (v2) edge (v4);
\draw  (v4) edge (v1);
\draw  (v1) edge (v5);
\draw  (v5) edge (v3);
\draw  (v3) edge (v6);
\draw  (v6) edge (v2);
\node (v21) at (1.4517,-0.4807) {};
\node (v20) at (0.4822,-2.1672) {};
\node (v126) at (3.3917,-1.9586) {};
\node (v18) at (0.8867,-2.464) {};
\node (v19) at (1.008,-1.7639) {};
\node (v16) at (3.1133,-2.464) {};
\node (v17) at (2.973,-1.774) {};
\node (v30) at (1.75,-1.65) {};
\node (v28) at (2.25,-1.65) {};
\node (v31) at (1.2893,-1.3894) {};
\node (v29) at (2.6967,-1.3844) {};
\node (v32) at (1.265,-1.65) {};
\node (v26) at (2.735,-1.65) {};
\node (v10) at (1.73,-2.5) {};
\node (v14) at (2.27,-2.5) {};
\node (v11) at (0.9415,-2.7169) {};
\node (v15) at (3.0647,-2.732) {};
\node (v13) at (2.0056,-2.6706) {};
\node (v12) at (2,-2.1) {};
\node (v22) at (1.5,-1.2) {};
\node (v24) at (2.5,-1.2) {};
\node (v23) at (1.7525,-0.37) {};
\node (v25) at (2.2436,-0.365) {};
\node (v27) at (3.1416,-3.0004) {};
\draw  [-stealth](v10) edge (v11);
\draw  [-stealth](v12) edge (v13);
\draw  [-stealth](v14) edge (v15);
\draw  [-stealth](v16) edge (v17);
\draw  [-stealth](v18) edge (v19);
\draw  [-stealth](v4) edge (v20);
\draw  [-stealth](v2) edge (v21);
\draw  [-stealth](v22) edge (v23);
\draw  [-stealth](v24) edge (v25);
\draw  [-stealth](v9) edge (v26);
\draw  [-stealth](v8) edge (v32);
\draw  [-stealth](v30) edge (v31);
\draw  [-stealth](v28) edge (v29);
\draw  [-stealth](v5) edge (v27);
\draw  [-stealth](v3) edge (v126);
\node (v111) at (2.0001,-0.2445) {};
\draw  [-stealth] (v7) edge (v111);
\node at (2.5657,-0.2259) {e};
\node at (0.9373,-3.1258) {e};
\node (v33) at (2.00,-1.7638) {t};
\node (v1111) at (2.0213,-1.7745) {};
\draw  [red,thick]plot[smooth, tension=0] coordinates {(v33) (v12) (2.0128,-2.4725) (v10) (v11) (0.9474,-2.9902) (v1)};
\draw  [blue,thick]plot[smooth, tension=0] coordinates {(v33) (v28) (2.5,-1.5) (v24) (2.2826,-0.4986) (2.4829,-0.3479) (v2) (v1)};
\draw  [green,thick]plot[smooth, tension=0] coordinates {(v1111) (2.037,-2.4945)  (1.7534,-2.5161) (0.9789,-2.7407) (0.9781,-2.9906) (v3) (v6)};
\draw  [yellow,thick]plot[smooth, tension=0] coordinates {(v1111) (2.2627,-1.6691) (2.5242,-1.5145) (2.5263,-1.1956) (2.3088,-0.5072) (2.4942,-0.3674) (v6)};
\end{tikzpicture}

    \caption{The figure shows the simplicial complex representing $\P^2$ partitioned by the critical flowlines, shown in red, blue, yellow and green.}
    \label{fig:RP2partitioned}
\end{figure}

Ultimately, we imagine that the real projective plane is this upper hemisphere, consisting of a bendy disk, a circle that serves as the equator, and a point on the circle to give us a groundedness. Defining an orientation on $t$ shows us that there are two paths $t\to e$ with the same induced sign, which tells us that $\delt(t)=2e$. However for the arrow from $e$ to the critical vertex 1, the sign on the arrow $t\to e$ induces a sign on $e\to 1$ from the left, and an opposite sign on $e\to 1$ from the right, so that they cancel out, and $\delt(e)=0$. 

We can discuss the homological implications of this. Let $K$ be the $\RPtwo$ simplicial complex. As there are two paths $t\to e$, the map from $C^{\morse}_{2}(K,f)$, the space generated by critical 2-simplices of $K$, to $C^{\morse}_{1}(K,f)$, the space generated by critical 1-simplices, is the map $\delt_1:\Z\to \Z:z\mapsto 2z$. Then, as there are two paths $e\to 1$ which cancel out in sign, the map $\delt_2:C_1(K)\to C_0(K)$ is the zero map. 

Calculating the homology groups in this way, we find that $H_2=\frac{\ker(\delt_2)}{\im(\delt_3)}=0$, since there is no $d_3$, $H_1=\frac{\ker(\delt_1)}{\im(\delt_2)}=\frac{\Z}{2\Z}$, and $H_0=\frac{\ker(\delt_1)}{\im(\delt_0)}=\Z$. 

\subsection{Orientation on floperations}\label{subsec:florientations}

We wish to show that every critical flowline has an opposite partner. To this end, let us restate the flip of the Flop.

\begin{lemma}[The flip of the Flop]
    For some path $P$, we must have
    \[\theta(\Flop(P))=-\theta(P).\]
\end{lemma}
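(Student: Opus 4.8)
The plan is to reduce the statement to a purely local computation at the unique double drop of $P$ and then verify the sign flip there. Since $P$ has index $2$ it contains exactly one double drop $\alpha^{(n+1)}\to\beta^{(n)}\to\gamma^{(n-1)}$, and $\Flop(P)$ differs from $P$ only by replacing this segment with $\alpha\to\beta'\to\gamma$; every other arrow, and in particular the total length $\ell(P)$ and the index $\Ind(P)$, is left unchanged. By Lemma \ref{lem:multiplicative sign} the sign of a path factors as a product over its segments, so it is enough to show that the two length-$2$ segments $\alpha\to\beta\to\gamma$ and $\alpha\to\beta'\to\gamma$ carry opposite signs. This is precisely the content already extracted in Lemma \ref{lem:flip_of_flop}; I recall the argument here because it is the engine of the sign-cancellation that drives the rest of the section.

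For the local computation I would fix the global vertex ordering and write $\gamma=\alpha\setminus\{v_a,v_b\}$ with $a<b$ in that ordering. Removing $v_a$ first gives the arrow $\alpha\to\alpha\setminus\{v_a\}$ of sign $(-1)^a$; inside $\alpha\setminus\{v_a\}$ the vertex $v_b$ has been shifted to position $b-1$, so the second arrow has sign $(-1)^{b-1}$, for a product $(-1)^{a+b-1}$. Removing $v_b$ first gives sign $(-1)^b$, and then $v_a$ is still in position $a$, giving $(-1)^a$ and a product $(-1)^{a+b}$. The two products differ by exactly the factor $-1$. Finally, on a length-$2$, index-$2$ segment the correction term $(-1)^{(\ell-\Ind)/2}$ is trivial, and since $\Flop$ preserves both $\ell$ and $\Ind$ the correction terms are identical for $P$ and $\Flop(P)$; hence they contribute nothing to the comparison and $\theta(\Flop(P))=-\theta(P)$.

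The only place demanding care is the off-by-one reindexing: the essential $-1$ comes solely from the fact that deleting the earlier vertex shifts the later vertex's position down by one, whereas deleting the later vertex leaves the earlier one fixed. This is the combinatorial shadow of the antisymmetry that makes $\del^2=0$ in ordinary simplicial homology, so I would present it as a single clean case distinction on whether $a<b$ rather than grinding through sub-cases. The global bookkeeping — explicitly checking that $\ell$ and $\Ind$ are genuinely unchanged by a Flop — is routine, but I would state it, since it is exactly what licenses dropping the $(-1)^{(\ell-\Ind)/2}$ factor from the comparison and isolating the sign flip at the double drop.
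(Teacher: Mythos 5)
Your proposal is correct and takes essentially the same route as the paper: the paper proves the statement by the identical local comparison of the two vertex-removal orders (Lemma \ref{lem:flip_of_flop}) and then simply cites that lemma for general paths, just as you reduce to the double-drop segment via Lemma \ref{lem:multiplicative sign} and note that $\ell$, $\Ind$, and all other arrows are untouched. If anything, your index bookkeeping — the extra $-1$ arising because deleting the earlier vertex shifts the later vertex from position $b$ to $b-1$ — is stated more carefully than the paper's displayed case formula, which garbles the exponents while relying on the same off-by-one phenomenon.
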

\begin{proof}
    Follows from Lemma \ref{lem:flip_of_flop}.
\end{proof}

This is to say that each time we Flop a path, we negate its sign. Let us see the effect that the other floperations have on the sign.

\begin{lemma}[The flip of the Insert]
    For some path $P$, we must have
    \[\theta(P)=-\theta(\Insert(P)).\]
\end{lemma}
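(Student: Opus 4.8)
The plan is to isolate exactly the change that Insert makes to a path and show that it contributes precisely a factor of $-1$ to the sign. Recall from Figure \ref{fig:howToInsert} that if $\beta$ is the noncritical intermediate simplex of the double drop in $P$, then Insert leaves $P$ unchanged except for splicing in, at $\beta$, the unique Morse arrow adjacent to $\beta$ together with the matching boundary arrow. Concretely, writing $\delta$ for the Morse partner of $\beta$ guaranteed by Lemma \ref{lem:headtailorneither} (so $\delta=\alpha'$ with $\dim\delta=\dim\beta+1$ when $\beta$ is a tail, and $\delta=\gamma'$ with $\dim\delta=\dim\beta-1$ when $\beta$ is a head), Insert replaces the visit to $\beta$ by the length-two closed walk $Q:\beta\to\delta\to\beta$. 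Thus, cutting $P$ at the relevant occurrence of $\beta$ as $P=P_1\circ P_2$, we have $\Insert(P)=P_1\circ Q\circ P_2$.

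First I would invoke Lemma \ref{lem:multiplicative sign} (applied twice, using associativity of concatenation) to obtain $\theta(\Insert(P))=\theta(P_1)\,\theta(Q)\,\theta(P_2)=\theta(Q)\,\theta(P)$. I would note that both the definition of $\theta$ and the cited lemma make sense for any sequence of modified-Hasse arrows, in particular for the illegal path $\Insert(P)$ and for the closed walk $Q$, since $\theta$ is built only from the per-arrow signs together with $\ell$ and $\Ind$, both of which add under concatenation. It then remains only to evaluate $\theta(Q)$.

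To compute $\theta(Q)$ I would argue that its two arrows connect the same incident pair $\{\beta,\delta\}$ and hence carry the same sign: if $\delta$ and $\beta$ differ by the vertex $v_i$, then the boundary arrow between them has sign $(-1)^i$, and the reversed Morse arrow between the same pair is assigned the identical $(-1)^i$ by the sign convention, so that $\prod_{a\in Q}\theta(a)=\bigl((-1)^i\bigr)^2=1$. Since $Q$ has length $\ell(Q)=2$ and, starting and ending at $\beta$, has index $\Ind(Q)=0$, the prefactor is $(-1)^{(\ell(Q)-\Ind(Q))/2}=(-1)^1=-1$. Hence $\theta(Q)=1\cdot(-1)=-1$, and combining with the previous step gives $\theta(\Insert(P))=-\theta(P)$, i.e. $\theta(P)=-\theta(\Insert(P))$, as required.

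The main obstacle I anticipate is the bookkeeping around the sign convention on the upward Morse arrow: the definition of $\theta(a)$ is phrased for a down-arrow $\alpha\to\beta$ via removal of the $i$-th vertex, so I must confirm that the reversed (Morse) arrow between the same incident pair receives this same sign, and that $\theta$ genuinely extends to illegal paths containing such a backward arrow. Once this is pinned down, the cancellation $\bigl((-1)^i\bigr)^2=1$ and the single sign flip coming from the length-two, index-zero detour $Q$ are immediate; the only further care needed is to check that both the tail case and the head case of Figure \ref{fig:howToInsert} indeed produce the same length-two loop at $\beta$, so that the computation of $\theta(Q)$ is uniform.
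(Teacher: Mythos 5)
Your proof is correct and is in essence the paper's own argument: both hinge on the facts that the two inserted arrows carry the same sign (so contribute $(\theta(x))^2=1$ to the product of arrow signs), that $\ell$ increases by $2$ while $\Ind$ is unchanged, and hence the prefactor $(-1)^{(\ell-\Ind)/2}$ flips sign. Your routing through Lemma \ref{lem:multiplicative sign} and the sign $\theta(Q)=-1$ of the spliced length-two loop is just a structured repackaging of the paper's direct computation of $\theta(\Insert(P))$.
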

\begin{proof}
    The operation of Insertion gives us two new arrows, so that $\ell(\Insert(P))=\ell(P)+2$. Furthermore, they are both the same arrow, so will have the same sign. Therefore if $x$ is the arrow to be Inserted we have
    \begin{align*}
        \theta(\Insert(P))&=\left(\prod_{a\in P}\theta(a)\right)(\theta(x))^2\left((-1)^{\frac{\ell(P)+2-\Ind(P)}{2}}\right)\\
        &=(-1)\left(\prod_{a\in P}\theta(a)\right)\left((-1)^{\frac{\ell(P)-\Ind(P)}{2}}\right)\\
        &=(-1)\theta(P).
    \end{align*}
    This proves the desired equality.
\end{proof}

\begin{lemma}[The flip of the Cancel]
    For some path $P$, we must have
    \[\theta(P)=-\theta(\Cancel(P)).\]
\end{lemma}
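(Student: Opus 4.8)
The plan is to run exactly the same computation used for the flip of the Insert, observing that Cancel is structurally the reverse of Insert: whereas Insert adjoins a Morse arrow together with its reverse boundary arrow on one side of the intermediate simplex, Cancel deletes precisely such a redundant up--down excursion. Concretely, if $P$ is the (illegal) path on which Cancel acts, then $\Cancel(P)$ is obtained by removing the secondary instance of the intermediate simplex $\beta$ together with the adjacent simplex $\alpha'$, collapsing the segment $\beta\to\alpha'\to\beta$ into a single vertex. This deletes two arrows, so $\ell(\Cancel(P))=\ell(P)-2$, and since the first and last simplices of the path are untouched the index is preserved, $\Ind(\Cancel(P))=\Ind(P)$.

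The rigorous argument I would write out mirrors the Insert proof directly. The two deleted arrows are the boundary arrow $\alpha'\to\beta$ and the Morse arrow $\beta\to\alpha'$; these are the same simplicial edge traversed in opposite directions, so they carry the same sign $\theta(x)$ and contribute $(\theta(x))^2=1$ to the product $\prod_{a}\theta(a)$. Substituting into the definition of the sign on a path then gives
\[
\theta(\Cancel(P))=\left(\prod_{a\in P}\theta(a)\right)(-1)^{\frac{\ell(P)-2-\Ind(P)}{2}}=(-1)\left(\prod_{a\in P}\theta(a)\right)(-1)^{\frac{\ell(P)-\Ind(P)}{2}}=-\theta(P),
\]
which rearranges to the desired identity $\theta(P)=-\theta(\Cancel(P))$. (Equivalently, one can invoke the flip of the Insert lemma through the relation $P=\Insert(\Cancel(P))$, obtaining $\theta(P)=-\theta(\Cancel(P))$ at once; I would mention this as the conceptual reason but present the direct computation for rigour.)

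The main point to pin down carefully is the claim that the two deleted arrows contribute $(\theta(x))^2=1$, i.e. that the reversed Morse arrow and the boundary arrow between $\beta$ and $\alpha'$ receive the same element of $\{\pm1\}$ under the sign convention. This holds because both correspond to removing the same vertex from $\alpha'$ to reach $\beta$, so the formula $(-1)^i$ returns the same value regardless of the direction in which the modified Hasse diagram traverses the edge. Once this is granted, the length drops by $2$ and the parity factor flips exactly as in the Insert case. That $\Cancel(P)$ is again a genuine path with the same endpoints was already established when the floperation was introduced, so no further work is needed there.
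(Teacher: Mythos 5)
Your proof is correct and follows essentially the same route as the paper's own argument: both note that Cancel removes two copies of the same arrow, so the product of arrow signs is unchanged (since $\theta(x)^2=1$) while $\ell$ drops by $2$, flipping the parity factor $(-1)^{\frac{\ell(P)-\Ind(P)}{2}}$ and hence the sign. Your explicit justification that the upward and downward traversals of the edge between $\beta$ and $\alpha'$ carry the same sign, and your remark that the result also follows from the Insert lemma via $P=\Insert(\Cancel(P))$, are small additions of care beyond what the paper writes, but the computation is the same.
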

\begin{proof}
    The Cancel operation removes both instances of the intermediate Morse arrow. This is to say that $\ell(\Cancel(P))=\ell(P)-2$ and both the arrows removed will have the same sign. Also for intermediate Morse arrow $x$, we have $\theta(x)^2=1$ since the sign can only ever be 1 or $-1$. So
    \begin{align*}
        \prod_{a\in P}\theta(a)&=\theta(x)^2\left(\prod_{a\in \Cancel(P)}\theta(a)\right)\\
        &=\prod_{a\in \Cancel(P)}\theta(a)
    \end{align*}
    Therefore we have that
    \begin{align*}
        \theta(\Cancel(P))&=\left(\prod_{a\in \Cancel(P)}\theta(a)\right)\left((-1)^{\frac{\ell(P)-2-\Ind(P)}{2}}\right)\\
        &=(-1)\left(\prod_{a\in P}\theta(a)\right)\left((-1)^{\frac{\ell(P)-\Ind(P)}{2}}\right)\\
        &=(-1)\theta(P),
    \end{align*}
    proving the desired equality.
\end{proof}
We conclude from these three lemmas that every floperation flips the sign on the path. This is to say that an odd number of floperations acting on $P$ will give the resulting path a sign of $-\theta(P)$, and an even number of floperations on $P$ will give the resulting path a sign of $\theta(P)$. 

In this section, we have defined the algorithm by which we generate the one-dimensional manifold of flowlines in a partition. We have analysed the application of the algorithm to two examples, observing how we can cycle through the different flowlines. We found how each floperation affects sign, and set ourselves up to show that the critical flowlines cancel each other out with opposite signs. In the next section, we indeed make such a definition, and prove hence that $\delt^2(\alpha)=0$.

\section{Proving that boundary flowlines come in pairs}\label{sec:Proving that the boundary square must have a pair}
The examples shown have given us an idea of how the algorithm works on simplicial complexes. In particular, one thing we can notice is that for some such generated list of flowlines $\AlgList_c(\FO)$ for $\FO$ critical, and some noncritical flowline label pair $(F_i,\ell_i)$ in the list, then $\AlgList_{\ell_i}(F_i)$ matches exactly the elements in $\AlgList_c(\FO)$ following $(F_i,\ell_i)$. 

Another thing worth noticing is that the algorithm is composed of floperations Cancel and Insert, which are co-inverse, and Flop, which is self-inverse. For this reason, we may suspect that the algorithm is its own inverse. We now prove that, for critical flowlines, this is true.

\subsection{Involutivity}

We wish to show that the algorithm is involutive, which is to say that the algorithm applied twice to a critical flowline will return the same flowline. 
\begin{lemma}[Algorithm involutivity]\label{lem:algInvolutivity}
    Let $\FO$ be a critical flowline, and $\Fn=\Alg(\FO)$, where the length of the list $[\FO,F_1,\dots,F_{n-1},\Fn]$ of flowlines is $n+1$.
    The algorithm is involutive when applied to any critical flowline $\FO$, so that applying the algorithm to $\Fn$ gives a list $[\Fn,F_{n+1},\dots,F_{n+m-1},\FO]$. If the $(n-i)$th flowline in the list is $F_{n-i}$, then we have $F_{n-i}=F_{n+i}$ (where the labels are not necessarily the same). Furthermore, if $F_{n+i}$ has label $\ell$ then $F_{n-i}$ has the conjugate label $\overline{\ell}$. 
     
\end{lemma}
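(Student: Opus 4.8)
The plan is to exploit the algebraic structure of the three floperations. First I would record the inverse relations: $\Flop$ is an involution, since Flopping a double drop returns the original intermediate simplex by the uniqueness in Lemma \ref{lem:floppability assertion}, while $\Insert$ and $\Cancel$ are mutually inverse, as Cancelling removes exactly the pair of arrows that Insert introduces and Inserting restores exactly the pair that Cancel deletes. Consequently the entire forward run of the algorithm from $\FO$ to $\Fn$ is a word $w$ in these floperations, and reversing $w$ while replacing each letter by its inverse ($\Flop\mapsto\Flop$, $\Insert\mapsto\Cancel$, $\Cancel\mapsto\Insert$) yields a word that carries $\Fn$ back to $\FO$. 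The whole content of the lemma is then that this reversed word is precisely the word the algorithm itself emits when started on $\Fn$, so that the reversed intermediate states coincide with the forward ones, i.e. $F_{n-i}=F_{n+i}$.

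The engine for this is a local reversibility statement, proved by examining a single transition of the list. Between two consecutive appended (legal) flowlines $F_k$ and $F_{k+1}$ the algorithm performs a short, forced sequence of floperations whose shape is dictated by the label $\ell_{k+1}$: a $\mathtt{c}$-transition arrives at $F_{k+1}$ by $\Cancel$ and is left by $\Flop$, whereas an $\mathtt{f}$-transition arrives by $\Flop$ and, when $F_{k+1}$ is noncritical, is left by $\Insert$. I would show that the move the algorithm is forced to make at a given legal flowline depends only on that flowline — whether it is critical, and, if noncritical, on which side of the intermediate simplex its unique Morse arrow sits (unique by Lemma \ref{lem:headtailorneither}). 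Hence, approaching $F_{k+1}$ from the $F_{k+2}$ side in the backward run, the algorithm is forced to undo exactly the departing operation of the forward run, which is the inverse of the arriving operation; this is the local reversibility claim.

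The label bookkeeping then falls out of this duality. In the forward run a flowline is arrived at by some operation and departed by another; in the backward run it is arrived at by the inverse of its forward departing operation. For a $\mathtt{c}$-flowline the forward departure is $\Flop$, so the backward arrival is $\Flop^{-1}=\Flop$, giving label $\mathtt{f}=\overline{\mathtt{c}}$; for a noncritical $\mathtt{f}$-flowline the forward departure is $\Insert$, so the backward arrival is $\Insert^{-1}=\Cancel$, giving label $\mathtt{c}=\overline{\mathtt{f}}$. Chaining the local statement from $\Fn$ backwards — and using the deterministic ``tail'' property, that running the algorithm from any intermediate $F_i$ with the move matching its label reproduces exactly the remainder of the list — matches $F_{n+i}$ with $F_{n-i}$ for every $i$ (in particular forcing the second list to have the same length, $m=n$), with conjugate labels, and the backward run halts on reaching $F_0=\FO$ because $\FO$ is critical. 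Both $\Alg(\FO)$ and $\Alg(\Fn)$ begin by Flopping (the $\Alg_c$ convention, consistent with labelling a critical endpoint $\mathtt{c}$), so this establishes $\Alg(\Alg(\FO))=\FO$.

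I expect the main obstacle to be the forced-move verification underlying local reversibility: one must check, through a short case analysis on the local configuration of the intermediate simplex (head versus tail of its Morse arrow, and legal versus illegal after each micro-step), that the algorithm's deterministic choice at each legal flowline is genuinely the unique inverse of the step that produced it, with no spurious alternative move available. The uniqueness supplied by Lemma \ref{lem:floppability assertion} and Lemma \ref{lem:headtailorneither} is exactly what rules out such alternatives, but marshalling these cases so that the inner $(\Cancel,\Flop)$-loop reverses cleanly into an outer $(\Flop,\Insert)$-pattern is the delicate part.
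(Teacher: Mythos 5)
Your proposal is correct and follows essentially the same route as the paper's proof: the inverse relations ($\Flop$ self-inverse, $\Insert$ and $\Cancel$ mutually inverse), the conjugate-label bookkeeping read off from the arriving operation, and an induction along the list whose crux is the same four-way case analysis (label `c' versus `f', legal versus illegal after the Flop) showing that each backward transition undoes the corresponding forward one. The only caution is that your phrase ``the move the algorithm is forced to make at a given legal flowline depends only on that flowline'' cannot be taken literally --- by the lemma's own conclusion the same flowline reappears with the conjugate label and hence a different departing move in the reverse run --- but your surrounding argument makes clear you intend the correct statement, namely that the local transition rules are intrinsic to the flowline and the run always exits through the door it did not enter.
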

\begin{proof}
    Let $F_0$ be a critical flowline, and assume that the algorithm returns the list of flowlines $\F=[(\widetilde{F_0},\ell_0),(F_1,\ell_1),\dots,(\widetilde{F_n},\ell_n)]$, where the $\widetilde{F_i}$ are not assumed to be unique\footnote{We will later show that we cannot have $\FO=\Fn$.}. Denote by $\G$ the list
    given by applying the algorithm to $\GO\coloneq\Fn$, so that $\G=[(\GO,\mathfrak{l}_0),(G_1,\mathfrak{l}_1),\dots,(\Gm,\mathfrak{l}_m)]$. Since the algorithm terminates only at critical flowlines, we acknowledge that $\GO$ and $\Gm$ must be critical. 
    
    Since Flop is self-inverse, and Insert and Cancel are inverse to each other, we claim that applying the algorithm to $F_n$ is akin to reversing all our steps to get there. To evidence this, we examine each of the steps of the algorithm individually, to ultimately show by induction that $F_{n-i}=G_i$ for all natural $i\leq n$, and indeed that $\FO=\Gm=\Fnm$.

    We wish to show 
    \begin{enumerate}
        \item $(F_{n},\ell_{n})=(G_0,\overline{\ell_0})$ (the base case).
        \item if $(F_{n-i},\ell_{n-i})=(G_i,\overline{\ell_i})$ then $(F_{n-i-1},\ell_{n-i-1})=(G_{i+1},\overline{\ell_{i+1}})$ (the induction). 
    \end{enumerate}
    For the induction step, we assume that $(F_{n-i},\ell_{n-i})=(G_i,\overline{\ell_i})$. We prove the following.
    \begin{itemize}
        \item If $G_i$ has label `f' then:
        \begin{enumerate}[label = (\alph*)]
            \item if $\Flop(\Insert(G_i))$ is legal then $G_{i+1}=\Flop(\Insert(G_i))$ and $F_{n-i-1}=G_{i+1}$.
            \item if $\Flop(\Insert(G_i))$ is illegal then $G_{i+1}=\Cancel(\Flop(\Insert(G_i)))$and $F_{n-i-1}=G_{i+1}$.
        \end{enumerate}
        \item Alternatively, if $G_i$ has label `c' then: 
        \begin{enumerate}[resume*]
            \item if $\Flop(G_i)$ is legal then $G_{i+1}=\Flop(G_i)$ and $F_{n-i-1}=G_{i+1}$.
            \item if $\Flop(G_i)$ is illegal then $G_{i+1}=\Cancel(\Flop(G_i))$ and $F_{n-i-1}=G_{i+1}$.
    \end{enumerate}
    \end{itemize}
    \begin{figure}[H]
        \centering
        \begin{tikzpicture}
\node[inner sep=0,outer sep=0] (v1) at (-2,2) {};
\node at (2,2) {};
\node[inner sep=0,outer sep=0] (v3) at (-2,-2) {};
\node[inner sep=0,outer sep=0] (v2) at (2,-2) {};
\draw [step=2cm,black] (v1) grid (v2);
\node at (-1,2.5) {$\ell_i=f$};
\node at (1,2.5) {$\ell_i=c$};
\node at (-3.5,1) {$\ell_{i+1}=f$};
\node at (-3.5,-1) {$\ell_{i+1}=c$};
\node at (-1,1) {(a)};
\node at (1,1) {(c)};
\node at (-1,-1) {(b)};
\node at (1,-1) {(d)};
\draw  (v1) edge (v3);
\draw  (v3) edge (v2);
\end{tikzpicture}
        \caption{The figure shows the four cases to prove in the induction step to prove the hypothesis.}
        \label{fig:InvolutivityInductionCases}
    \end{figure}
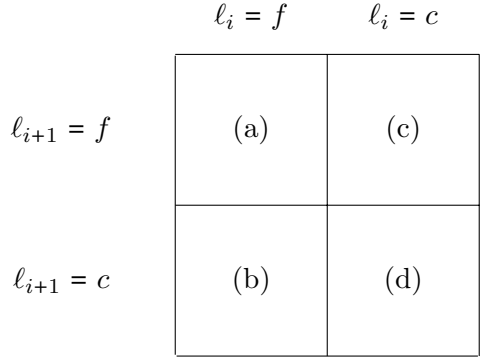
    In proving these steps, we will show first that the base case is true, and prove by induction that any of the four possible steps from some Append to another Append, (a), (b), (c), or (d), must yield the desired equality.
    
    First, we notice that $\widetilde{G_0}=\Fn$ by assumption. By the definition of the algorithm, the first flowline $\widetilde{G_0}$ always has label `c', and the last flowline $\Fn$ always has label `f'. Therefore $(\Fn,\ell_n)=(\widetilde{G_0},\overline{\ell_0})$, proving the base case.
    
    We now analyse the point of the algorithm at which Flop takes us to a legal path (given by label `f'), causing us to branch out into the left-hand side of the algorithm flowchart. So, assume that $F_{n-j}=G_j$, and $G_j$ has label `f'.
    \begin{enumerate}[label = (\alph*)]
        \item If $\Flop(\Insert(G_{j}))$ is legal then $G_{j+1}=\Flop(\Insert(G_{j}))$. Furthermore, assuming that $F_{n-j}=G_{j}$, and using the fact that Flop was the last operation to get to $G_j$, we know that the first floperation from $F_{n-j}$ to $F_{n-j+1}$ is Flop, so the previous operation cannot be Flop. So the floperation directly before $F_{n-j}$ must be Cancel (since Insert would make $F_{n-j}$ illegal). Before Cancelling, we cannot have the operation Cancel as that would imply the existence of a second distinct dimension jump adjacent to the critical simplex, and we cannot have an Insertion as $\Insert(\Cancel)=\id$. Then the previous operation must be a Flop. Furthermore, the algorithm shows us that the only way we can Flop a legal flowline is if it has the label `c' (this is because if it has label `f' then we apply Flop twice, which is trivial, and therefore impossible).
        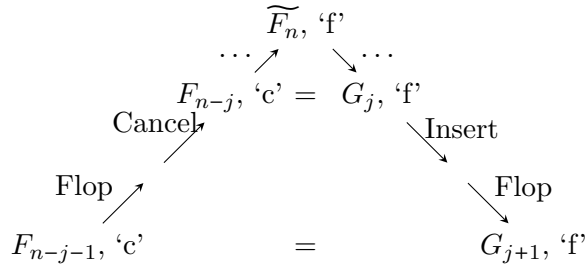
\begin{figure}[H]
            \centering
            \begin{tikzpicture}

\node (v3) at (0,2) {$\widetilde{F_n}$, `f'};
\node (v4) at (1,1) {$G_{j}$, `f'};
\node (v5) at (2,0) {};
\node (v2) at (-1,1) {$F_{n-j}$, `c'};
\node (v1) at (-2,0) {};
\draw [-stealth] (v1) edge (v2);
\draw [-stealth] (v2) edge (v3);
\draw [-stealth] (v3) edge (v4);
\draw [-stealth] (v4) edge (v5);
\node at (0,1) {=};
\node at (-1.9794,0.6711) {Cancel};
\node at (-0.8928,1.5618) {$\dots$};
\node at (0.9777,1.5618) {$\dots$};
\node at (2.0555,0.5999) {Insert};
\node (v7) at (3,-1) {$G_{j+1}$, `f'};
\node (v6) at (-3,-1) {$F_{n-j-1}$, `c'};
\draw  [-stealth](v6) edge (v1);
\draw  [-stealth](v5) edge (v7);
\node at (-2.9141,-0.1928) {Flop};
\node at (2.8664,-0.2106) {Flop};
\node at (0,-1) {=};
\end{tikzpicture}
            \caption{The figure shows the operations of Insert and Flop on some $G_j$, and the inverse operations Flop and Cancel on $F_{n-j-1}$, showing that $F_{n-j-1}=G_{j+1}$.}
            \label{fig:insertflopgiesequality}
        \end{figure}
        This is to say that 
        \begin{align*}
            G_{j+1}&=\Flop(\Insert(\Cancel(\Flop(F_{n-j-1}))))\\
            &=\Flop(\Flop(F_{n-j-1}))\\
            &=F_{n-j-1},
        \end{align*}
        and the labels of $F_{n-i}$ and $G_i$ are conjugate, as required. Then the hypothesis is true for case (a) in the table.
        \item If $\Flop(\Insert(G_{j}))$ is \textit{not} legal, then we Cancel as per the algorithm, so that $G_{j+1}$ has label `c'. On the left, we must have some operation before Flopping and Cancelling. We cannot Flop directly before a Flop as the operation is self-inverse, and we cannot Cancel since we assumed that the flowline after said operation is not legal: so it must have been Insert. From this, we can also see from the algorithm that $F_{n-j-1}$ must have label `f', since if it were preceded by a Cancel we would have $\Insert\circ\Cancel=\id$. We find that 
        \begin{align*}
            G_{j+1}&=\Cancel(\Flop(\Insert(\Cancel(\Flop(\Insert(F_{n-j-1}))))))\\
            &=\Cancel(\Flop(\Flop(\Insert(F_{n-j-1}))))\\
            &=\Cancel(\Insert(F_{n-j-1})\\
            &=F_{n-j-1}
        \end{align*}
        and the label of $G_{j+1}$ is conjugate to that of $F_{n-j-1}$. This proves the hypothesis for case (b) in the table.
    \end{enumerate}

Therefore, given $F_{n-i}=G_{i}$ where $G_{i}$ has label `f', we see that both the step via (Insert, Flop) and the step via (Insert, Flop, Cancel) induce $F_{n-i-1}=G_{i+1}$, proving the hypothesis for the left column of cases in the table. 

For the right column of the table, we analyse the point of the algorithm at which Cancel takes us to a legal path (given by label `c'). So assume that $F_{n-k}=G_k$ and $G_k$ has label `c'.
\begin{enumerate}[resume*]
    \item By the assumption on the label relation, we assume that $F_{n-k}$ has label `f'. If $\Flop(G_k)$ is legal, we need only note that 
    \[F_{n-k-1}=\Flop(F_{n-k})=\Flop(G_{k})=\Flop(G_{k+1}).\]
    To prove induction on the label hypothesis, note that $G_k$ has label `f', and as the step before Flopping, $G_{n-k}$ has label `c'. So the hypothesis is true for case (c) in the table.
    \item If $\Flop(G_k)$ is illegal, we must Cancel to get to a legal path. To see what's going on on the other side, we realise that the last operation from $F_{n-k-1}$ to $F_{n-k}$ must still be Flop, and the floperation preceding it, therefore, must either be Cancel or Insert. However Cancel would obtain a legal path, and we have assumed that $\Flop(F_{n-k})$ is not legal, so the operation must be Insert. Since Insert can only be applied to a legal path, we must have $F_{n-k}=\Flop(\Insert(F_{n-k-1}))$, giving us 
    \begin{align*}
        G_{k+1}&=\Cancel(\Flop(\Flop(\Insert(F_{n-k-1}))))\\
        &=\Cancel(\Insert(F_{n-k-1}))\\
        &=F_{n-k-1},
    \end{align*}
    as required. We have therefore proved case (d) in the table.
\end{enumerate}

We have analysed all the different possible cases, and thus have proved that $F_{n-1}=G_{1}$, and by induction that $F_{n-i}=G_{i}$ for all $i\leq n$. This is to say that $\FO=F_{n-n}=\Gm=F_{n+m}$, which is critical and therefore terminating, i.e. $m=n$ and $\FO=\Fnn$.
\end{proof}

Given that the algorithm is involutive, we can say more about the flowline equivalence we have defined.
\begin{lemma}\label{lem:equiv.rel}
    The flowline equivalence given by Definition \ref{def:flowlineEquivalnece} associating flowlines in the same algorithm list with each other is an equivalence relation $\sim$ in that, for any flowlines $F_i, F_j, F_k$ through a given simplicial complex, we have reflexivity ($F_i\sim F_i$); symmetry ($F_i\sim F_j$ implies $F_j\sim F_i$); and transitivity ($F_i\sim F_j$ and $F_j\sim F_k$ implies $F_i\sim F_k$).
\end{lemma}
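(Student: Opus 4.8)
The plan is to reduce all three axioms to a single structural fact: that each index-2 flowline lies on a unique \emph{maximal list}, well-defined up to reversal, of which every algorithm output $\AlgList_{f/c}(F')$ is a contiguous sub-segment. Once this is established, $F_i\sim F_j$ becomes equivalent to the statement that $F_i$ and $F_j$ lie on a common maximal list, and ``lying on a common maximal list'' is patently an equivalence relation. Reflexivity is then immediate directly from the definition: taking $F'=F_i$, the list $\AlgList_c(F_i)$ (or $\AlgList_f(F_i)$) begins with $F_i$, so reading the occurrence of $F_i$ at this initial position as both marked entries gives $F_i\sim F_i$.

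The substantive work is constructing the maximal list. For a noncritical flowline $F'$ carrying label $\ell$, the observation recorded at the opening of this section shows that $\AlgList_{\ell}(F')$ is exactly the tail of a complete critical-to-critical list after $F'$; a symmetric application of that observation, combined with Lemma \ref{lem:algInvolutivity}, identifies the reverse of $\AlgList_{\overline{\ell}}(F')$ with the head of the same list. Concatenating these two halves yields a list $M(F')$ running from one critical flowline to another. Determinism of the algorithm makes $M(F')$ depend only on $F'$, and Lemma \ref{lem:algInvolutivity} pins down the two critical endpoints $\FO$ and $\Fn=\Alg(\FO)$, showing that $\AlgList_c(\Fn)$ is precisely $M(F')$ read backwards. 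Hence $M(F')$ is well-defined up to reversal, and \emph{both} orientations are themselves algorithm outputs. In a component containing no critical flowline, as in the sphere example of \S\ref{subsec:sphere_ex}, the same reasoning instead produces a cyclic $M(F')$, traversable in either direction by starting with Flop or with Insert; and by determinism every witnessing list in Definition \ref{def:flowlineEquivalnece} is a sub-segment of the relevant $M$.

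With the maximal list in hand, symmetry and transitivity are quick. For symmetry, if some $\AlgList_{f/c}(F')$ places $F_i$ before $F_j$, then $F_i,F_j\in M(F')$; reading $M(F')$ in the opposite orientation, which as noted is again an algorithm output, places $F_j$ before $F_i$, so $F_j\sim F_i$. For transitivity, $F_i\sim F_j$ and $F_j\sim F_k$ force $F_i,F_j,F_k$ all to lie on the single maximal list $M(F_j)$; choosing the orientation of $M(F_j)$ in which $F_i$ precedes $F_k$ (one of the two orientations must, and both are algorithm outputs) witnesses $F_i\sim F_k$.

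The main obstacle is the well-definedness of $M$ up to reversal, that is, the claim that two algorithm lists sharing a common flowline are segments of one and the same maximal list. This is exactly where determinism and Lemma \ref{lem:algInvolutivity} must be combined with care: determinism pins down each half of $M$ from any interior entry, while involutivity is what guarantees that the two critical endpoints generate $M$ in opposite orders rather than two genuinely distinct lists. I would therefore isolate this gluing statement as the technical heart of the proof and verify it before invoking it in the symmetry and transitivity arguments.
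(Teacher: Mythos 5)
Your proof is correct, and it rests on the same essential ingredients as the paper's own proof — determinism of the algorithm, the observation opening Section \ref{sec:Proving that the boundary square must have a pair}, and involutivity (Lemma \ref{lem:algInvolutivity}) — but it is organized differently and is strictly more careful. The paper verifies the three axioms directly and tersely: reflexivity and symmetry by exhibiting a list $[\FO,\dots,F_i,\dots,F_j,\dots,\Fn]$ generated from a critical flowline and, for symmetry, reversing it via involutivity; transitivity by simply writing down a single list containing $F_i$, $F_j$, $F_k$ in that order. What the paper never justifies is precisely what you isolate as the technical heart: why the two separate witnesses for $F_i\sim F_j$ and $F_j\sim F_k$ must be contiguous, compatibly oriented segments of one common list, so that a single list through all three flowlines actually exists. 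It also tacitly assumes every relevant flowline lies in a list emanating from a critical flowline, which fails for components with no critical flowline (the sphere of \S\ref{subsec:sphere_ex}, where the algorithm never terminates); your explicit cyclic case closes that gap, since there every flowline recurs in both traversal directions. The trade-off is that the paper's version is shorter and implicitly defers the structural picture to Lemma \ref{lem:2critFsPerEquivClass} (the 1-manifold statement), whereas your maximal-list lemma front-loads that structure and makes Lemma \ref{lem:equiv.rel} self-contained; since Lemma \ref{lem:beta.is.unimportant(no offence beta)} and Lemma \ref{lem:2critFsPerEquivClass} themselves invoke Lemma \ref{lem:equiv.rel}, your ordering also avoids any appearance of leaning on results proved later. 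One small point to tidy in a final write-up: in the transitivity step, handle the degenerate situation $F_i=F_k$ (or coincident positions on $M(F_j)$) by the same single-occurrence convention you adopted for reflexivity, since then neither orientation has one strictly preceding the other.
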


\begin{proof}
We show that $\sim$ is reflexive, symmetric and transitive.
\begin{itemize}
    \item $F_i\sim F_i$ : Let $\F = [\FO,\dots,F_i,\dots,F_j,\dots,\Fn]$ be the list of flowlines in the algorithm applied to a critical flowline in the same list as $F_i$. Since $F_i\in \F$, then $F_i\sim F_i$.
     
    \item $F_i\sim F_j\Rightarrow F_j\sim F_i$ : Let $\F = [\FO,\dots,F_i,\dots,F_j,\dots,\Fn]$ be the list of flowlines in the algorithm applied to a critical flowline in the same list as $F_i$. Then applying the algorithm to $\Fn$, we see that $F_j$ is in the same list as $F_i$. Therefore $F_j\sim F_i$.
     
    \item $F_i\sim F_j$ and $F_j\sim F_k\Rightarrow F_i\sim F_k$ : Let $\F = [\FO,\dots,F_i,\dots,F_j,\dots,F_k,\dots,\Fn]$ be the list of flowlines in the algorithm applied to a critical flowline in the same list as $F_i$. 
    Clearly, $F_i$ is in the same list as $F_k$, so $F_i\sim F_k$.
     
\end{itemize}
\end{proof}
We now wish to show that there are an even number of boundary flowlines in $\M(\alpha,\gamma)$, and thirdly that for every boundary flowline, there is a unique distinct boundary flowline, with the opposite sign, attained by the algorithm.

\subsection{Boundary flowlines have unique distinct partners}\label{subsec:Boundary flowlines have unique distinct partners}
We now wish to show that $\delt^2(\alpha)=0$, regardless of the critical $(p-1)$-simplices in the simplicial complex. 
\begin{lemma}\label{lem:2critFsPerEquivClass}
    For $\alpha$ an $(n+1)$-simplex and $\gamma$ an $(n-1)$-simplex, the moduli space of flowlines $\overline{\M}(\alpha,\gamma)$ is a simplicial manifold with boundary of index 1 (i.e. a disjoint union of paths and cycles).
\end{lemma}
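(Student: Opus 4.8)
The plan is to treat $\overline{\M}(\alpha,\gamma)$ as the $1$-dimensional simplicial complex of Definition \ref{def:ModuliSpace}: its vertices are the flowlines $\alpha\to\gamma$ and its edges are the consecutive pairs of flowlines recorded in the algorithm lists. Since this complex has dimension at most $1$ by construction, to prove it is a $1$-dimensional simplicial manifold with boundary it suffices to establish the purely combinatorial fact that every vertex is incident to at most two edges. Indeed, a $1$-complex in which every vertex has degree $1$ or $2$ is exactly a disjoint union of paths and cycles, with the degree-$1$ vertices comprising the boundary. By Lemma \ref{lem:equiv.rel} the relation $\sim$ is an equivalence relation, so its classes are the connected components of this graph, and it is enough to analyse the neighbours of a single flowline.

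The heart of the argument is the degree count, which I would organise according to whether the intermediate (double-drop) simplex $\beta$ of a flowline $F$ is critical. Between any two consecutive list entries the algorithm performs a single $\Flop$, preceded by an $\Insert$ when the previous entry carried the label `f' and followed by a $\Cancel$ when that $\Flop$ produced an illegal path. Thus $F$ can be evolved in (at most) two directions: the Flop-first direction recorded by $\AlgList_c(F)$ and the Insert-first direction recorded by $\AlgList_f(F)$. If $\beta$ is noncritical it carries a unique Morse arrow by Lemma \ref{lem:headtailorneither}, so both directions are available; moreover each floperation is deterministic --- $\Flop$ gives the unique alternative double drop by Lemma \ref{lem:floppability assertion}, $\Insert$ is forced by that unique Morse arrow, and $\Cancel$ removes the unique backward arrow --- so each direction yields exactly one adjacent legal flowline and $F$ has degree $2$. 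If $\beta$ is critical then by Lemma \ref{lem:headtailorneither} no Morse arrow is adjacent to it, $\Insert$ is impossible, only the Flop-first direction survives, and $F$ has degree $1$; such $F$ are exactly the boundary flowlines identified in Lemma \ref{lem:beta.is.unimportant(no offence beta)}.

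To see that these counts really are the vertex degrees in the graph --- so that the predecessor in one list and the successor in another are genuinely one undirected edge, and no flowline secretly acquires a third incident edge --- I would appeal to Lemma \ref{lem:algInvolutivity}. Involutivity states that running the algorithm from $\Alg(\FO)$ reproduces $\FO$ with the list reversed and the labels conjugated; combined with the matching observation that $\AlgList_{\ell_i}(F_i)$ coincides with the tail of any list past the entry $(F_i,\ell_i)$, this shows that the two neighbours computed above are list-independent and that the Flop-first and Insert-first directions together exhaust the edges at $F$. In particular the edge set defined via $\AlgList_f$ and the one defined via $\AlgList_c$ agree, so the graph is unambiguous and its edge relation is symmetric.

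Putting these together, no vertex is isolated, since the algorithm places every flowline into some list, and every vertex has degree $1$ (the critical flowlines) or $2$ (the noncritical flowlines). A $1$-dimensional simplicial complex subject to these constraints is a disjoint union of paths and cycles, hence a $1$-dimensional simplicial manifold with boundary, the boundary being the degree-$1$ vertices; this is illustrated by the single cycle of the sphere example in \S\ref{subsec:sphere_ex} and by the paths joining critical flowlines in the projective plane example. The main obstacle is the degree bound for noncritical flowlines: one must rule out extra incident edges and reconcile the possible `c'/`f' labellings of a vertex, and this is exactly where the determinism of the three floperations and the involutivity of Lemma \ref{lem:algInvolutivity} do the essential work.
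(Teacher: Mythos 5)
Your overall framework is the same as the paper's: view $\M(\alpha,\gamma)$ as a graph, show critical flowlines have degree $1$ and noncritical ones degree $2$, and conclude that a graph with all degrees at most $2$ is a disjoint union of paths and cycles. The paper's proof is exactly such a degree count (its step (b) is your degree-$2$ claim, its step (a) supplies the exactness), with involutivity controlling the edge set, so the route matches. However, there is a genuine gap where you write that for noncritical $F$ ``each direction yields exactly one adjacent legal flowline and $F$ has degree $2$.'' Determinism of the floperations gives at most one neighbour per direction, hence degree $\leq 2$; it does not give degree exactly $2$. You must additionally rule out (i) that the Insert-first direction returns $F$ itself, i.e. $\Cancel(\Flop(\Insert(F)))=F$, and (ii) that the Flop-first and Insert-first neighbours coincide. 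These are precisely the ``turning point'' configurations $F_k=F_{k+1}$ and $F_{k-1}=F_{k+1}$ whose exclusion forms the technical core of the paper's proof: (i) fails because the simplex introduced by $\Insert$ was not in $F$ and, after the $\Flop$, cannot be the pair removed by $\Cancel$; (ii) fails by a label argument --- by the involutivity of Lemma \ref{lem:algInvolutivity} the list would have to satisfy $\ell_{n/2}=\overline{\ell_{n/2}}$, which is a contradiction. Your proposal gestures at ``determinism and involutivity'' doing this work, but never makes either coincidence argument.

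Without those two exclusions your conclusion is strictly weaker than what the lemma is used for. A noncritical flowline whose two directions lead to the same neighbour would have degree $1$, so the complex would still be a disjoint union of paths and cycles, but its boundary would contain a noncritical flowline; a connected component could then contain a single critical flowline rather than two distinct ones. The formulation the paper records as equivalent to the lemma --- every component has exactly two or exactly zero critical flowlines --- and everything downstream of it (Lemma \ref{lem:del=boundarysum2}, Corollary \ref{cor:theta.alg=-theta.alg.alg}, Theorem \ref{thm:explicitCancellationOfFlowlines}) depends on pairing each critical flowline with a \emph{distinct} partner of opposite sign, so the degree count must be exact rather than an upper bound. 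Supplying the two coincidence arguments above would close the gap and make your write-up essentially the paper's proof in cleaner graph-theoretic language.
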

The above lemma is equivalent to the statement that for each equivalence class of flowlines (i.e. connected component of $\M(\alpha,\gamma)$), there are either exactly two critical flowlines (an interval has two edgepoints) or zero critical flowlines (a cycle has no edgepoints).
 
\begin{proof}
    We show that boundary flowlines always come in pairs. Clearly, the algorithm partitions flowlines such that there can be at most two critical flowlines per equivalence class. We have also seen from the sphere example (\ref{subsec:sphere_ex}) that when there are no critical flowlines there are no boundary flowlines and there is one equivalence class of flowlines defined by the algorithm, which is to say that there can sometimes be no critical flowlines. Since this case adds no new terms to the sum of critical flowlines, we need not consider such cases. 
    Thus, we can reduce the problem to showing that:
    \begin{enumerate}[label = (\alph*)]
        \item if there exists a critical flowline in an equivalence class, then applying the algorithm will yield a distinct critical flowline, and
        \item any noncritical $F$ in the moduli space of flowlines is adjacent via the algorithm to exactly two flowlines.
    \end{enumerate}
    
    We prove that the two critical flowlines belonging to each connected component of the moduli space of flowlines must be distinct. We find that as Cancel and Insert are inverse actions and Flop is self-inverse, the starting steps of the algorithm will mirror the concluding steps of the algorithm. This is to say that $F_1=F_{n-1}$, $F_2=F_{n-2}$, and ultimately $F_i=F_{n-i}$. Therefore there must be a turning point in the flowline algorithm: i.e. 
    \begin{itemize}
        \item if $n$ is odd then $n=2k+1$ for some $k$, and $F_{k}=F_{k+1}$.
        \item if $n$ is even then $n=2k$ for some $k$, and $F_{k-1}=F_{k+1}$.
    \end{itemize}
    We intend to show that neither of these is possible. 
    
    Indeed it is clear that applying Insert, Flop and (Cancel and Flop while path is illegal) to $F_k$ will have a nontrivial effect on the flowline.
    This is because we have inserted a simplex that was not in $F_k$, and flopping means that the inserted simplex cannot be the simplex we cancel, i.e. we cannot have $F_k=F_{k+1}$.  

    For the second case, where $n$ is even, we recall from Lemma \ref{lem:algInvolutivity} that if $\FO$ and $\Fn$ are critical with $n+1$ flowlines in $\AlgList(\FO)$ then we have that the label of $F_{n-i}$ is the conjugate of the label of $F_{n+i}$. If $\FO=\Fn$, then we must have that $\ell_1=\overline{\ell_{n-1}}$, $\ell_2=\overline{\ell_{n-2}}$, \dots, $\ell_{n/2}=\overline{\ell_{n/2}}$. Then $\ell_{n/2}\neq \ell_{n/2}$, and we have a contradiction.

    Therefore, since in no case can we have $F_{k-1}=F_{k+1}$ or $F_{k}=F_{k+1}$, we find that there can never be a unique critical flowline in a partition. This is to say that for each $\gamma\in \Gamma$, for a boundary flowline $F_{a}:\alpha\to \gamma$ there must be a distinct boundary flowline $F_{b}:\alpha\to \gamma$.

    Now we prove (b). Suppose for contradiction that $F$, a noncritical flowline, is adjacent to more than two distinct flowlines. Then $\deg(F):=n\geq 2$. Call them the distinct flowlines $A_1$, $A_2$, $A_3$ et cetera. For each adjacent flowline, we can apply the algorithm to each $A_i$ to obtain $n$ distinct critical flowlines $\widetilde{A_1},\widetilde{A_2},\dots$. Assume without loss of generality that $\AlgList_c(\widetilde{A_1})$ terminates at $\widetilde{A_2}$, and $\AlgList_c(\widetilde{A_2})$ terminates at $\widetilde{A_3}$. Then by the involutivity proven in \ref{lem:algInvolutivity} we must have that $\widetilde{A_1}=\widetilde{A_3}$. Indeed, $\AlgList_c$ generates the set of critical flowlines, where 
    \[(\AlgList_c)^n(\widetilde{A_1})=\begin{cases}
        \widetilde{A_1} \quad &\text{for }n\text{ even}\\
        \widetilde{A_2} \quad &\text{for }n\text{ odd,}\\
    \end{cases}\]
    contradicting the distinctiveness of the $\tilde A_i$. Indeed, we can conclude from this that $F$ has exactly two critical points related to it via the equivalence relation, and therefore has exactly two adjacent flowlines $A_1$ and $A_2$.
\end{proof}

This proof tells us that if a flowline is repeated then there has been a full cycle around possible flowlines without passing a critical point, so turning back isn't a floption.  

We now prove that all boundary flowlines of the moduli space can be partitioned into their equivalence classes.
\begin{lemma}\label{lem:del=boundarysum2}
    For a critical simplex $\alpha^{(p+1)}$, the set $\Gamma=\{\gamma_1,\dots,\gamma_n\}$ of $p-1$ dimensional critical simplices, the set $\F(\alpha,\gamma)$ of critical flowlines, and $\E(\alpha,\gamma)$ the set of representatives of equivalence classes $[F_i]$ with boundaries, we have
    \begin{align*}
    \del\M(\alpha,\beta)=\sum_{[F_i]\in\E(\alpha,\gamma)}\left(\theta(\Alg_c(F_i))+\theta(\Alg_c(\Alg_c(F_i)))\right).
    \end{align*}
\end{lemma}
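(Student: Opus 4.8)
The plan is to treat the left-hand side as a signed count of boundary flowlines and reorganise it by equivalence class, assembling the previously proved facts in the right order. First I would invoke Lemma \ref{lem:beta.is.unimportant(no offence beta)}, which identifies the boundary flowlines of $\M(\alpha,\gamma)$ exactly with the critical flowlines in $\F(\alpha,\gamma)$, so that the quantity on the left (the signed count of boundary flowlines) is
\[
\#\del\M(\alpha,\gamma)=\sum_{F\in\F(\alpha,\gamma)}\theta(F).
\]
By Lemma \ref{lem:equiv.rel} the relation $\sim$ is an equivalence relation, and its classes are precisely the connected components of $\M(\alpha,\gamma)$; so I can split the sum over $\F(\alpha,\gamma)$ into a sum over equivalence classes, each class contributing the signs of the critical flowlines it contains.

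Next I would apply Lemma \ref{lem:2critFsPerEquivClass}: each connected component of $\M(\alpha,\gamma)$ is either an interval, carrying exactly two distinct critical flowlines as its endpoints, or a cycle, carrying none. The cyclic components contribute nothing to the sum, so the only classes that matter are those with boundary, which are exactly the ones represented by $\E(\alpha,\gamma)$. This reduces the sum to a sum over $[F_i]\in\E(\alpha,\gamma)$, each term being the total of $\theta$ over the two critical endpoints of that interval.

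It then remains to name these two endpoints in terms of the representative $F_i$. Starting from $F_i$ and flopping first, the algorithm terminates at one critical endpoint, namely $\Alg_c(F_i)$; applying the algorithm again to this critical flowline, involutivity (Lemma \ref{lem:algInvolutivity}) walks the list back across the whole component and terminates at the opposite endpoint $\Alg_c(\Alg_c(F_i))$, which is distinct from the first by Lemma \ref{lem:2critFsPerEquivClass}. Hence the two endpoints of the class $[F_i]$ are exactly $\Alg_c(F_i)$ and $\Alg_c(\Alg_c(F_i))$, and summing over all classes with boundary gives
\[
\#\del\M(\alpha,\gamma)=\sum_{[F_i]\in\E(\alpha,\gamma)}\left(\theta(\Alg_c(F_i))+\theta(\Alg_c(\Alg_c(F_i)))\right),
\]
as required.

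The main thing to get right, rather than any hard calculation, is the bookkeeping: I must check that the decomposition into equivalence classes counts each critical flowline exactly once and is independent of the chosen representative $F_i$, and that $\Alg_c(F_i)$ and $\Alg_c(\Alg_c(F_i))$ genuinely range over the two distinct endpoints without collapsing to one. Both points follow from involutivity together with the two-endpoint structure of Lemma \ref{lem:2critFsPerEquivClass}, so the only real work is marshalling these established results in the correct sequence.
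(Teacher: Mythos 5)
Your proposal is correct and follows essentially the same route as the paper: both arguments partition the signed count of critical (equivalently, boundary) flowlines into equivalence classes via Lemma \ref{lem:equiv.rel}, discard the classes without boundary, and identify the two endpoints of each remaining class as $\Alg_c(F_i)$ and $\Alg_c(\Alg_c(F_i))$ using involutivity and the two-endpoint structure from Lemma \ref{lem:2critFsPerEquivClass}. If anything, your write-up is slightly more careful than the paper's, since you explicitly invoke Lemma \ref{lem:beta.is.unimportant(no offence beta)} for the identification of boundary flowlines with critical flowlines and explicitly note that cyclic components contribute nothing, points the paper's proof leaves implicit.
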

\begin{proof}
    Let $F_-\in\F(\alpha,\gamma)$. Then there is some $[F_i]\in\E$ such that $F_-\sim F_i$. This is to say that either $F_-=\Alg_c(F_i)$ or $F_-=\Alg_c(\Alg_c(F_i))$. Without loss of generality, assume that $F_-=\Alg_c(F_i)$, and denote $F_+:=\Alg_c(\Alg_c(F_i))$. Then we can replace the terms of these two flowlines in the sum $\sum_{F\in \F(\alpha,\gamma)}\theta(F)\cdot\gamma$ by the single term $\theta(F_-)\cdot\gamma+\theta(F_+)\cdot\gamma$. Since every $F\in \F(\alpha,\gamma)$ belongs to some equivalence class, we can partition the entire sum into pairs in this way. Then
    \[\sum_{\gamma \in\Gamma}\sum_{F\in \F(\alpha,\gamma)}\theta(F)\cdot\gamma=\sum_{\gamma\in\Gamma}\sum_{[F_i]\in\E(\alpha,\beta)}\left(\theta(\Alg_c(F_i))\cdot\gamma+\theta(\Alg_c(\Alg_c(F_i)))\cdot\gamma\right),\]
    as required.
\end{proof}

We now prove that there are always an odd number of floperations to get from one critical flowline to another.

\begin{lemma}\label{1mod2floperations}
    There are $1\mod 2$ many floperations throughout the duration of the algorithm applied to a critical flowline.
\end{lemma}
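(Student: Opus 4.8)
The plan is to read off the parity of the total number of floperations directly from the control flow of the algorithm, without tracking any geometric data about the flowlines. Recall that the only floperations are $\Flop$, $\Insert$ and $\Cancel$ (the Append steps are bookkeeping and do not count), and that, as recorded in the flowchart of Figure \ref{fig:algorithmFlowchart}, the algorithm applied to the critical flowline $\FO$ begins with a $\Flop$. The heart of the argument is the claim that, reading the run of the algorithm as a word in the alphabet $\{\Flop,\Insert,\Cancel\}$, the floperations strictly alternate between $\Flop$ and non-$\Flop$, beginning and ending with a $\Flop$.

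First I would establish the two local transition rules. After any $\Flop$ the algorithm tests the resulting path: if it is legal it Appends with label `f' and then either terminates (if the flowline is critical) or performs an $\Insert$; if it is illegal it performs a $\Cancel$. Hence every $\Flop$ is immediately followed by termination, an $\Insert$, or a $\Cancel$, and never by another $\Flop$. Conversely, every $\Insert$ is followed by the $\Flop$ at the top of the outer loop, and every $\Cancel$ inside the inner \textbf{while} loop is followed by a $\Flop$ before the illegality test is re-run. Thus no two $\Flop$s are ever adjacent and no two non-$\Flop$ floperations are ever adjacent.

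Combining these rules with the fact that the first floperation is a $\Flop$, I would conclude that the sequence of floperations has the shape
\[
    \Flop,\ X_1,\ \Flop,\ X_2,\ \Flop,\ \dots,\ X_k,\ \Flop,
\]
with each $X_i\in\{\Insert,\Cancel\}$ and some $k\geq 0$. The word necessarily ends in a $\Flop$, since the algorithm breaks out of the loop only immediately after a $\Flop$ has produced a legal critical flowline, namely the `f'-labelled terminal flowline $\Fn$. This word therefore contains $k+1$ copies of $\Flop$ and $k$ other floperations, for a total of $2k+1$, which is odd, as required. This is exactly the parity that, together with the flip-of-$\Flop$, flip-of-$\Insert$ and flip-of-$\Cancel$ lemmas, forces $\theta(\Fn)=-\theta(\FO)$.

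As a consistency check I would recount by outer iterations: each non-terminating pass through the outer loop contributes one initial $\Flop$, some number $m\geq 0$ of $(\Cancel,\Flop)$ pairs from the inner loop, and one concluding $\Insert$, i.e.\ $2m+2$ floperations (even); while the final, terminating pass contributes a $\Flop$ followed by some $m'$ pairs $(\Cancel,\Flop)$ and no $\Insert$, i.e.\ $2m'+1$ floperations (odd). Summing even contributions over the non-terminating passes with the single odd contribution of the terminating pass again yields an odd total. The only points needing explicit care — and the step most in need of verification — are that termination can occur only immediately after a $\Flop$ (so the word genuinely ends in $\Flop$) and that the inner $\Cancel$–$\Flop$ loop preserves the alternation; both follow directly from the branching structure displayed in Figure \ref{fig:algorithmFlowchart}.
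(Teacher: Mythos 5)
Your proposal is correct and follows essentially the same argument as the paper: the paper likewise observes that the algorithm opens with a $\Flop$, proceeds in two-floperation chunks of the form ($\Cancel$, $\Flop$) or ($\Insert$, $\Flop$), and can terminate only immediately after a $\Flop$, giving an odd total. Your alternating-word formulation and the recount by outer-loop passes are just two regroupings of this same decomposition, with the useful addition that you make explicit why termination must follow a $\Flop$.
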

\begin{proof}
    By analysing the algorithm, we can see that the first step is always to Flop, which is 1 floperation modulo 2. Not considering the bookkeeping, there are two options from here. We can either Cancel and Flop, or Insert and Flop. As the algorithm is composed of some number of iterations of these two-floperation chunks, we must always have performed 1 floperation modulo 2 after applying a Flop. Furthermore, the algorithm can only terminate after having Flopped. Therefore there must be 1 modulo 2 floperations in the algorithm from some critical flowline to another.
\end{proof}
\begin{corollary}\label{cor:theta.alg=-theta.alg.alg}
    For $\alpha,\gamma$ critical simplices with $\dim(\alpha)=\dim(\gamma)+2$, let $I$ be a connected component of $\M(\alpha,\gamma)$. Then for a flowline $F\in I$ if the algorithm terminates at some critical flowline, we have
    \[\theta(\Alg_c(F))+\theta(\Alg_c(\Alg_c(F)))=0\]
\end{corollary}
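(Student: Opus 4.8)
The plan is to read the corollary off directly from the sign-flipping behaviour of the floperations together with the parity count of Lemma \ref{1mod2floperations}. Under the standing hypothesis that the algorithm terminates, write $F_- := \Alg_c(F)$ for the critical flowline at which the algorithm terminates when started from $F$ by Flopping, and $F_+ := \Alg_c(\Alg_c(F)) = \Alg_c(F_-)$ for the critical flowline obtained by running the algorithm once more, now starting from $F_-$. By Lemma \ref{lem:algInvolutivity} the algorithm is involutive on critical flowlines, so $F_-$ and $F_+$ are exactly the two boundary flowlines of the connected component $I$, and by Lemma \ref{lem:2critFsPerEquivClass} they are distinct.

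First I would record that each individual floperation negates the sign of a path. This is precisely the combined content of the three flip lemmas proved in \S\ref{subsec:florientations}: the flip of the Flop, the flip of the Insert, and the flip of the Cancel each assert that a single floperation sends $\theta$ to $-\theta$. It follows immediately that if a flowline $G'$ is obtained from a flowline $G$ by performing $k$ floperations in sequence, then $\theta(G') = (-1)^k\,\theta(G)$.

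Next I would apply this to the passage from $F_-$ to $F_+$. The flowline $F_+$ is obtained from the critical flowline $F_-$ by running the algorithm to termination, and by Lemma \ref{1mod2floperations} any such run on a critical flowline performs an odd number of floperations. Taking $k$ odd in the identity above gives $\theta(F_+) = -\theta(F_-)$, i.e. $\theta(\Alg_c(\Alg_c(F))) = -\theta(\Alg_c(F))$. Adding the two signs then yields
\[
\theta(\Alg_c(F)) + \theta(\Alg_c(\Alg_c(F))) = \theta(F_-) + \theta(F_+) = \theta(F_-) - \theta(F_-) = 0,
\]
as required.

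There is essentially no obstacle here beyond bookkeeping, since the real work has already been carried out in the three flip lemmas and in the parity Lemma \ref{1mod2floperations}. The only point demanding a little care is to confirm that $\Alg_c(\Alg_c(F))$ is genuinely the output of applying the algorithm (started by Flopping) to the critical flowline $\Alg_c(F)$, so that Lemma \ref{1mod2floperations} governs exactly the sequence of floperations separating the two signs being summed; this is immediate from the definition of the notation $\Alg_c$ together with the involutivity supplied by Lemma \ref{lem:algInvolutivity}.
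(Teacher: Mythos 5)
Your proposal is correct and follows essentially the same route as the paper's own proof: both combine the three flip lemmas of \S\ref{subsec:florientations} (each floperation negates $\theta$) with the parity count of Lemma \ref{1mod2floperations} (an odd number of floperations between the two critical flowlines) to conclude $\theta(\Alg_c(\Alg_c(F)))=-\theta(\Alg_c(F))$. Your extra care in invoking Lemma \ref{lem:algInvolutivity} and Lemma \ref{lem:2critFsPerEquivClass} to identify $\Alg_c(\Alg_c(F))$ as the distinct partner boundary flowline is implicit in the paper's phrase ``unique, distinct partner,'' so there is no substantive difference.
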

\begin{proof}
    Let the critical flowline $\Alg_c(F)$ found by applying the algorithm to $F$ have sign $\Theta$. There must be 1 floperation modulo 2 to get from $\Alg_c(F)$ to its unique, distinct partner $\Alg_c(\Alg_c(F))$. Since each floperation flips the sign of the flowline, the terminating critical flowline will have sign $(-1)^{2n+1}\cdot\Theta=-\Theta$. This is to say that $\theta(\Alg_c(\Alg_c(F)))=-\theta(\Alg_c(F))$, proving the corollary.
\end{proof}
 
We can now state the theorem we have been chasing.
\begin{theorem}[Explicit cancellation of flowlines]\label{thm:explicitCancellationOfFlowlines}
For a critical $(n+1)$-simplex $\alpha$, $\Gamma$ the set of critical $(n-1)$-simplices, and $\F(\alpha,\gamma)$ the set of critical flowlines from $\alpha$ to $\gamma$, we have
    \begin{align*}
        \sum_{\gamma\in\Gamma}\sum_{\widetilde{F_i}\in \F(\alpha,\gamma)}\theta\left(\widetilde{F_i}\right)\cdot \gamma =0.
    \end{align*}
\end{theorem}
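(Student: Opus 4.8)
The plan is to assemble the signed cancellation directly from the two structural results already in hand, namely Lemma \ref{lem:del=boundarysum2} and Corollary \ref{cor:theta.alg=-theta.alg.alg}. The key observation is that the sum $\sum_{\gamma\in\Gamma}\sum_{\widetilde{F_i}\in\F(\alpha,\gamma)}\theta(\widetilde{F_i})\cdot\gamma$ ranges over \emph{all} critical flowlines, and that these have already been organised into $\sim$-equivalence classes by Lemma \ref{lem:equiv.rel}, each class being a connected component of the moduli space $\M(\alpha,\gamma)$. So the whole theorem reduces to verifying that each class contributes zero and that the classes exhaust the sum.

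First I would fix a terminal simplex $\gamma\in\Gamma$ and partition $\F(\alpha,\gamma)$ into its $\sim$-equivalence classes. By Lemma \ref{lem:2critFsPerEquivClass} every connected component of $\M(\alpha,\gamma)$ is a one-dimensional simplicial manifold, hence either a cycle containing no critical flowlines (contributing nothing to the sum) or an interval whose two boundary points are critical flowlines. For a representative $F_i$ of a class with boundary, involutivity (Lemma \ref{lem:algInvolutivity}) guarantees that the two boundary flowlines are precisely $\Alg_c(F_i)$ and $\Alg_c(\Alg_c(F_i))$, and that they are distinct. Invoking Lemma \ref{lem:del=boundarysum2} then rewrites the sum as
\[\sum_{\gamma\in\Gamma}\sum_{[F_i]\in\E(\alpha,\gamma)}\left(\theta(\Alg_c(F_i))\cdot\gamma+\theta(\Alg_c(\Alg_c(F_i)))\cdot\gamma\right).\]

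Next I would apply Corollary \ref{cor:theta.alg=-theta.alg.alg}, which states that for each class $\theta(\Alg_c(F_i))+\theta(\Alg_c(\Alg_c(F_i)))=0$; this is itself a consequence of there being an odd number of floperations between a critical flowline and its partner (Lemma \ref{1mod2floperations}) together with the fact that each floperation flips the sign (the flips of the Flop, Insert and Cancel). Substituting this term-by-term collapses every summand to zero, delivering the claimed identity.

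The only real obstacle — already discharged by the preceding lemmas — is ensuring that the pairing is simultaneously well-defined and exhaustive: that every critical flowline lies in exactly one equivalence class, that each class with boundary supplies exactly the two flowlines $\Alg_c(F_i)$ and $\Alg_c(\Alg_c(F_i))$, and that cyclic components genuinely contribute nothing. Since Lemma \ref{lem:2critFsPerEquivClass} rules out a lone critical flowline in any class and the equivalence relation of Lemma \ref{lem:equiv.rel} makes the partition exhaustive, what remains after invoking these results is purely formal bookkeeping.
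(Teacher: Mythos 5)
Your proposal is correct and takes essentially the same approach as the paper: both arguments pair the two critical flowlines in each equivalence class via the algorithm's involution (Lemmas \ref{lem:algInvolutivity}, \ref{lem:2critFsPerEquivClass}) and cancel them using Corollary \ref{cor:theta.alg=-theta.alg.alg}, with cyclic components contributing nothing. The only difference is bookkeeping — you collapse the sum class-by-class via Lemma \ref{lem:del=boundarysum2}, whereas the paper partitions $\F(\alpha,\gamma)$ by sign into $\F_{+}$ and $\F_{-}$ and counts $k-k=0$ — a presentational rather than mathematical distinction.
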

\begin{proof}
    For the equality,
    \[\sum_{\gamma\in\Gamma}\#(\text{boundaries of flowlines } \alpha\to\gamma \text{ with 2 drops})\cdot \gamma=\sum_{\gamma\in\Gamma}\sum_{\widetilde{F_i}\in \F(\alpha,\gamma)}\theta\left(\widetilde{F_i}\right)\cdot \gamma,\]
    we know that boundary flowlines are exactly critical flowlines $\widetilde{F_i}\in \F(\alpha,\gamma)$.
    Furthermore, we can see that two boundary flowlines $\widetilde{F_i},\widetilde{F_j}$ from $\alpha$ to some $\gamma\in \Gamma$ will cancel each other out if they have opposite signs. This is to say that if there are 0 critical flowlines modulo 2 then the critical flowlines that exist must cancel in sign. 

    To prove the equality \[\sum_{\gamma\in\Gamma}\sum_{\widetilde{F_i}\in \F(\alpha,\gamma)}\theta\left(\widetilde{F_i}\right)\cdot \gamma =0\]
    we only need to show that 
    \[\sum_{\widetilde{F_i}\in \F(\alpha,\gamma)}\theta\left(\widetilde{F_i}\right)=0\]
    for each $\gamma$. To simplify this further, we recognise that each critical flowline $\widetilde{F_{+}}$ comes with a unique distinct critical flowline $\widetilde{F_{-}}$ in its equivalence class. Indeed, by Corollary \ref{cor:theta.alg=-theta.alg.alg} we know that every connected component of $\M(\alpha,\beta)$ has one critical flowline of sign (+1) and one of sign (-1). Using this, we partition $\F(\alpha,\gamma)$ into two sets $\F_{+}\coloneq\{F\in\F(\alpha,\gamma):\theta(F)=(+1)\}$ and $\F_{-}\coloneq\{F\in\F(\alpha,\gamma):\theta(F)=(-1)\}$. This is to say that if $|\F(\alpha,\gamma)|=2k$ then 
    \begin{align*}
        \sum_{\gamma\in\Gamma}\sum_{\widetilde{F}\in \F(\alpha,\gamma)}\theta\left(\widetilde{F}\right)\cdot \gamma&=\sum_{\gamma\in\Gamma}\sum_{\widetilde{F}\in \F_{+}}\theta\left(\widetilde{F}\right)\cdot\gamma+\sum_{\gamma\in\Gamma}\sum_{\widetilde{F}\in \F_{-}}\theta\left(\widetilde{F}\right)\cdot\gamma\\
        &= \sum_{\gamma\in\Gamma} k\cdot(+1)+\sum_{\gamma\in\Gamma} k\cdot(-1)\\
        &=\sum_{\gamma\in\Gamma} k-k\\
        &=0.
    \end{align*}
    This proves the equality.
\end{proof}

This proves that $\delt^2(\alpha)=0$ for any critical simplex $\alpha$, thus concluding the paper.

\printbibliography 
\appendix

\end{document}